\documentclass[11pt]{article}
\usepackage{epsfig}
\usepackage{amssymb,amsmath,amsthm,amscd}
\usepackage{latexsym}
\usepackage{amsmath}

\pagestyle{plain}
\setlength{\textheight}{8.3in}
\setlength{\textwidth}{6.5in}
\setlength{\evensidemargin}{0 in}
\setlength{\oddsidemargin}{0in}
\setlength{\topmargin}{0.0in}
\setlength{\parskip}{1ex}
\setlength{\parindent}{1em}

\usepackage{amsmath}
\numberwithin{equation}{section}

\newtheorem{thm}{Theorem}[section]
\newtheorem{prop}[thm]{Proposition}

\newtheorem{lem}[thm]{Lemma}

\theoremstyle{definition}
\newtheorem{defn}[thm]{Definition}

\newcommand{\be}{\begin{equation}}
\newcommand{\ee}{\end{equation}}

\newcommand{\R}{\mathbb{R}}
\newcommand{\N}{\mathbb{N}}
\newcommand{\E}{\mathbb{E}}

\begin{document}

\baselineskip=1.2\baselineskip

\pagestyle{plain}
\title{Pullback measure attractors for non-autonomous
stochastic FitzHugh-Nagumo system  with distribution dependence on unbounded domains
 \footnote{This work was supported
 by NSFC (12371178) and Natural Science Foundation of Sichuan province under grant 2023NSFSC1342. }}

\author{{Ruiyan Hu,  Dingshi Li, Tianhao Zeng\footnote{Corresponding authors:
zengtianhao123@my.swjtu.edu.cn (T. Zeng).}}
 \\{ \small\textsl{ School of Mathematics, Southwest Jiaotong University, Chengdu, 610031, P. R. China}}}

\date{}
\maketitle

{ \bf Abstract}  \,This paper is primarily focused on the asymptotic dynamics of a non-autonomous stochastic FitzHugh-Nagumo system with distribution dependence, specifically on unbounded domains  $\mathbb{R}^{n}$. Initially, we establish the well-posedness of solutions for the FitzHugh-Nagumo system with distribution dependence by utilizing the Banach fixed-point theorem. Subsequently, we demonstrate the existence and uniqueness of pullback measure attractors for this system through the application of splitting techniques, tail-end estimates and Vitali's theorem.

{\bf Keywords} FitzHugh-Nagumo system;  distribution dependence; pullback measure attractor.

 {\bf MSC 2010.} Primary  37L55; Secondary 34F05, 37L30, 60H10

\section{Introduction}
\setcounter{equation}{0}
In this paper, we study the asymptotic behaviors of the following non-autonomous stochastic Fitzhugh-Nagumo system with distribution dependence
 driven by nonlinear noise on  $\mathbb{R}^{n}$:
\begin{equation}\label{a1}
\left\{ \begin{array}{l}
  du\left( t \right)-\Delta u\left( t \right)dt+\lambda u\left( t \right)dt+\alpha v\left( t \right)dt+{{f}}\left( t,x,u,{{\mathcal{L}}_{u\left( t \right)}} \right)dt
  ={{G}_{1}}\left( t,x,u,{{\mathcal{L}}_{u\left( t \right)}} \right)dt \\
  +\sum\limits_{k=1}^{\infty }{\left( {{\theta }_{1,k}}\left( t,x \right)
  +w\left( x \right){{\sigma }_{k}}\left( t,u,{{\mathcal{L}}_{u\left( t \right)}} \right) \right)}d{{W}_{k}}\left( t \right),\quad x\in {{\mathbb{R}}^{n}},\ t>\tau , \\
  dv\left( t \right)+\gamma v\left( t \right)dt-\beta u\left( t \right)dt= {{G}_{2}}\left( t,x \right)dt \\
  +\sum\limits_{k=1}^{\infty }{\left( {{\theta }_{2,k}}\left( t,x \right)
  +{{\delta }_{k}}v\left( t \right) \right)}d{{W}_{k}}\left( t \right),\quad x\in {{\mathbb{R}}^{n}},\ t>\tau ,
\end{array} \right.
\end{equation}
with initial condition
\begin{align}\label{a2}
	u\left( \tau ,x \right)={{\xi }_{1}},\ v\left( \tau ,x \right)={{\xi }_{2}},\quad x\in {{\mathbb{R}}^{n}},
\end{align}
where $\tau \in \mathbb{R}$,  $\lambda,\alpha, \beta$ and $\gamma$ are positive constants and satisfy $\gamma>\lambda$, $f$ is a nonlinear function with arbitrary growth rate,  ${{\mathcal{L}}_{u\left( t \right)}}$ is the distribution of $u\left( t \right)$, $G_{1}$ is a Lipschitz function, ${{G}_{2}}\in {{L}^{\infty }}\left( \mathbb{R},{{H}^{1}}\left( {{\mathbb{R}}^{n}} \right)\cap {{L}^{4}}\left( {{\mathbb{R}}^{n}} \right) \right)$, ${{\theta }_{i,k}}:\mathbb{R}\to {{L}^{2}}\left( {{\mathbb{R}}^{n}} \right)$ for $i=1,2$, are given, $w\in {{H}^{1}}\left( \mathbb{R}^{n} \right)\cap {{W}^{1,\infty }}\left( {\mathbb{R}}^{n} \right)$, ${{\sigma }_{k}}$ is a nonlinear diffusion term for each $\ k\in \mathbb{N}$, ${{\delta }_{k}},\ k\in \mathbb{N}$, are nonnegative constants  and ${{\left\{ {{W}_{k}} \right\}}_{k\in \mathbb{N}}}$ is a sequence of independent standard real-valued Winner processes on a complete filtered probability space $\left( \Omega ,\mathcal{F},{{\left\{ {{\mathcal{F}}_{t}} \right\}}_{t\in \mathbb{R}}},\mathbb{P} \right)$, respectively.

Distribution-dependent stochastic differential equations (SDEs), alternatively known as McKean-Vlasov SDEs (MVSDEs) or mean-field SDEs, are equations that depend not only on the state of their solutions but also on the distribution of those solutions. This dependency causes the Markov operators associated with them to lose their semigroup properties. Recent years have seen a surge of interest in these equations, following pioneering works such as \cite{KMP,MHP}. The existence and uniqueness of solutions have been firmly established in studies including \cite{XFY,JGW,HSM,HXR,HSL,RZE,WDL}. Additionally, singular coefficients have been explored in \cite{BHM, HWF,WDS,RZW}, while large deviation principles have been introduced in \cite{CWW,LSZ,WDG,XZL}. It is noteworthy that Shi et al. \cite{Shi} have investigated the existence of pullback measure attractors for distribution-dependent stochastic reaction-diffusion equations.

 The Fitzhugh-Nagumo system, as a mathematical model, is dedicated to describing the transmission of signals within the nervous system, as documented in \cite{JNS,RFI}. The limiting behavior of solutions to the Fitzhugh-Nagumo system, particularly when defined on unbounded domains, has been extensively studied in numerous works, including \cite{AAB,EVV,AGY,AGB,BT,BW,WLB,RW,WAG,WZ}. For bounded domains, similar investigations have been conducted in \cite{LLW,LLZ,YL}. However, the literature has yet to explore the Fitzhugh-Nagumo system with distribution dependence. In this paper, we delve into the dynamics of the Fitzhugh-Nagumo system with distribution dependence, specifically as it is defined on $\mathbb{R}^{n}$.

 For simplicity, in the following the norm of $L^2(\R^n)$ is denoted as $\|\cdot\|$ and
 we write ${{\xi }_{0}}=\left( {{\xi }_{1}},{{\xi }_{2}} \right)$,
  ${{\mathbb{L}}^{2}}\left( {{\mathbb{R}}^{n}} \right)={{L}^{2}}\left( {{\mathbb{R}}^{n}} \right)\times {{L}^{2}}\left( {{\mathbb{R}}^{n}} \right)$
  and ${{\mathbb{H}}^{1}}\left( {{\mathbb{R}}^{n}} \right)={{H}^{1}}\left( {{\mathbb{R}}^{n}} \right)\times {{H}^{1}}\left( {{\mathbb{R}}^{n}} \right)$.
 To describe the main results of this paper, we denote by $\mathcal{P}\left( {{\mathbb{L}}^{2}}\left( {{\mathbb{R}}^{n}} \right) \right)$
 the space of probability measures on $\left( {{\mathbb{L}}^{2}}\left( {{\mathbb{R}}^{n}} \right),\mathcal{B}\left( {{\mathbb{L}}^{2}}\left( {{\mathbb{R}}^{n}} \right) \right) \right)$), where  $\mathcal{B}\left( {{\mathbb{L}}^{2}}\left( {{\mathbb{R}}^{n}} \right) \right)$ is the Borel $\sigma$-algebra of ${{\mathbb{L}}^{2}}\left( {{\mathbb{R}}^{n}} \right)$.
 The weak topology of $\mathcal{P}\left( {{\mathbb{L}}^{2}}\left( {{\mathbb{R}}^{n}} \right) \right)$ is metrizable, and the corresponding metric is denoted by ${{d}_{\mathcal{P}\left( {{\mathbb{L}}^{2}}\left( {{\mathbb{R}}^{n}} \right) \right)}}$.  Set
\[
{{\mathcal{P}}_{4}}\left( {\mathbb{L}^{2}}\left( {\mathbb{R}}^{n} \right) \right)=\left\{ \mu \in \mathcal{P}\left( {\mathbb{L}^{2}}\left( {\mathbb{R}}^{n} \right) \right):\int_{{\mathbb{L}^{2}}\left( {\mathbb{R}}^{n} \right)}{\left\| \xi  \right\|_{{\mathbb{L}^{2}}\left( {\mathbb{R}}^{n} \right)}^{4}d\mu \left( \xi  \right)}<\infty  \right\}.
\]
Then $\left( {{\mathcal{P}}_{4}}\left( {\mathbb{L}^{2}}\left( {\mathbb{R}}^{n} \right) \right),{{d}_{\mathcal{P}\left( {\mathbb{L}^{2}}\left( {\mathbb{R}}^{n} \right) \right)}} \right)$ is a metric space. Given $r > 0$, denote by
\[
{{B}_{{{\mathcal{P}}_{4}}\left( {\mathbb{L}^{2}}\left( {\mathbb{R}}^{n} \right) \right)}}\left( r \right)=\left\{ \mu \in {{\mathcal{P}}_{4}}\left( {\mathbb{L}^{2}}\left( {\mathbb{R}}^{n} \right) \right):\int_{{\mathbb{L}^{2}}\left({\mathbb{R}}^{n} \right)}{\left\| \xi  \right\|_{{\mathbb{L}^{2}}\left( {\mathbb{R}}^{n} \right)}^{4}d\mu \left( \xi  \right)}\le {{r}^{4}} \right\}.
\]

Given $\tau \le t$ and $\mu \in {{\mathcal{P}}_{4}}\left( {\mathbb{L}^{2}}\left( {\mathbb{R}}^{n} \right) \right)$, let $P_{\tau ,t}^{*}\mu $ be the law of the solution of \eqref{a1}-\eqref{a2} with initial law $\mu $  at initial time $\tau $. If $\phi :{\mathbb{L}^{2}}\left( {\mathbb{R}}^{n} \right)\to \mathbb{R}$  is a bounded Borel function, then we write
\[
{{p}_{\tau ,t}}\phi \left( {{u}_{\tau }} \right)=\mathbb{E}\left( \phi \left( k\left( t,\tau ,{{\xi }_{0}} \right) \right) \right),\quad\forall {{\xi }_{0}}\in {{\mathbb{L}}^{2}}\left( {{\mathbb{R}}^{n}} \right),
\]
where $k\left( t,\tau ,{{\xi }_{0}} \right)=(u\left( t,\tau ,{{\xi }_{0}} \right),v\left( t,\tau ,{{\xi }_{0}} \right))$
is the solution of \eqref{a1}-\eqref{a2} with initial value $\xi_{0}$  at initial time $\tau $.
Note that for the McKean-Vlasov stochastic  equation as  the  equation of \eqref{a1}, $P_{\tau ,t}^{*}$
is not the dual of ${{P}_{\tau ,t}}$ (see \cite{WDL}) in the sense that
\begin{align}\label{a3}
\int_{{\mathbb{L}^{2}}\left( {\mathbb{R}}^{n} \right)}{{{P}_{\tau ,t}}\phi \left( \xi  \right)d\mu \left( \xi  \right)}\ne \int_{{\mathbb{L}^{2}}\left( {\mathbb{R}}^{n} \right)}{\phi \left( \xi  \right)dP_{\tau ,t}^{*}\left( \xi  \right)},
\end{align}
where $\mu \in {{\mathcal{P}}_{4}}\left( {\mathbb{L}^{2}}\left( {\mathbb{R}}^{n} \right) \right)$ and $\phi :{\mathbb{L}^{2}}\left( {\mathbb{R}}^{n} \right)\to \mathbb{R}$ is a bounded Borel function.

The main goal of this paper is to prove that the system \eqref{a1}-\eqref{a2} admits a unique pullback  measure attractors
$\mathcal{A}=\{\mathcal{A}\left( \tau  \right):\tau \in \mathbb{R}\}\in {{\mathcal{D}}}$ in ${{\mathbb L}^{2}}\left( {{\mathbb{R}}^{n}} \right)$. The first difficulty lies in the fact that Sobolev embeddings are not compact as a result of the Fitzhugh-Nagumo system is partially dissipative systems. To overcome this obstacle, we need to decompose  $v$ into two functions,  one part is uniformly asymptotically null in ${{L}^{2}}\left( {{\mathbb{R}}^{n}} \right)$ and the other is regular in the sense that it belongs to ${{H}^{1}}\left( {{\mathbb{R}}^{n}} \right)$ as $t\rightarrow\infty$. The second obstacle lies in the fact that we can't directly obtain that ${{\left\{ P_{\tau ,t}^{*} \right\}}_{\tau \le t}}$ is a continuous non-autonomous dynamical system on $\left( {{\mathcal{P}}_{4}}\left( {\mathbb{L}^{2}}\left( {{\mathbb{R}}^{n}} \right) \right),{{d}_{\mathcal{P}\left( {\mathbb{L}^{2}}\left( {{\mathbb{R}}^{n}} \right) \right)}} \right)$ as a result of  ${P_{\tau ,t}^*}$ is no longer the dual of ${P_{\tau ,t}}$ as demonstrated by \eqref{a3}. To solve this problem, we will take the advantage of the regularity of ${{\mathcal{P}}_{4}}\left( {\mathbb{L}^{2}}\left( {{\mathbb{R}}^{n}} \right) \right)$ and apply the Vitali's theorem to prove the continuity of ${{\left\{ P_{\tau ,t}^{*} \right\}}_{\tau \le t}}$ on the subspace $\left( {{B}_{{{\mathcal{P}}_{4}}\left( {\mathbb{L}^{2}}\left( {{\mathbb{R}}^{n}} \right) \right)}}\left( r \right),{{d}_{\mathcal{P}\left( {\mathbb{L}^{2}}\left( {{\mathbb{R}}^{n}} \right) \right)}} \right)$
instead of the entire space $\left( {{\mathcal{P}}_{4}}\left( {\mathbb{L}^{2}}\left( {{\mathbb{R}}^{n}} \right) \right),{{d}_{\mathcal{P}\left( {\mathbb{L}^{2}}\left( {{\mathbb{R}}^{n}} \right) \right)}} \right)$.

The organization of this paper is as follows. In Section 2, we recall  the abstract theory of
  pullback measure attractors. In Section 3, we give some assumptions  which will
  be used in subsequent estimates. Section 4 is devoted to obtain the well-posedness of \eqref{a1}-\eqref{a2}.
  Section 5 is devoted to obtain the necessary estimates about the solutions of  \eqref{a1}-\eqref{a2}.
  In Section 6, we establish  the existence and uniqueness of pullback measure attractors for the system \eqref{a1}-\eqref{a2}.

\section{Preliminaries}
In this section, in order to discuss issues in an abstract framework, we  recall
some abstract theory of pullback measure attractors \cite{LW24, Shi}.

In the following, let $X$ be a separable Banach space with norm $\|\cdot\|_{X}$. Define $C_{b}(X)$
as the space of bounded continuous functions $\varphi:X\rightarrow\mathbb{R}$ endowed with the norm
\begin{equation*}
\|\varphi\|_{\infty}=\sup_{x\in X}|\varphi(x)|.
\end{equation*}
Let $L_{b}(X)$ denote the space of bounded Lipschitz functions on $X$ equipped with the norm
\begin{equation*}
\|\varphi\|_{L}=\|\varphi\|_{\infty}+\mbox{Lip}(\varphi),
\end{equation*}
where
\begin{equation*}
\mbox{Lip}(\varphi):= \mathop{\sup}\limits_{\scriptstyle x_{1},x_{2}\in X \hfill \atop
\scriptstyle x_{1}\neq x_{2}
\hfill} \frac{|\varphi(x_{1})-\varphi(x_{2})|}{\|x_{1}-x_{2}\|_{X}}<\infty,
\end{equation*}
for $f\in C_{b}(X)$. Denote by $\mathcal{P}(X)$ the set of probability measure on $(X,\mathcal{B}(X))$, where $\mathcal{B}(X)$ is the Borel $\sigma$-algebra of $X$. Given  $\varphi\in C_{b}(X)$ and $\mu\in\mathcal{P}(X)$, we write
\begin{equation*}
(\varphi,\mu)=\int_{X}\varphi(x)\mu(dx).
\end{equation*}

We say that a sequence $\{\mu_{n}\}_{n=1}^{\infty}\subseteq\mathcal{P}(X)$ is weakly convergent to $\mu\in\mathcal{P}(X)$ if and only if for every $\varphi\in C_{b}(X)$,
\begin{equation*}
\lim\limits_{n\rightarrow\infty}(\varphi,\mu_{n})=(\varphi,\mu).
\end{equation*}
The weak topology of  $\mathcal{P}(X)$ is metrizable with metric given by
\begin{equation*}
d_{\mathcal{P}(X)}(\mu_{1},\mu_{2})
= \mathop {\sup }\limits_{\scriptstyle \varphi \in L_b \left( X \right) \hfill \atop
                                           \scriptstyle \left\| \varphi \right\|_L  \le
                                            1 \hfill}|(\varphi,\mu_{1})-(\varphi,\mu_{2})|,
                                            ~~~\forall \mu_{1},\mu_{2}\in\mathcal{P}(X).
\end{equation*}
Note that the $(\mathcal{P}(X),d)$ is a Polish space. Denote $(\mathcal{P}_{p}(X),\mathbb{W}_{p})$ be the Polish space such that  for all $p\geq1$,
\begin{equation*}
\mathcal{P}_{p}(X)=\left\{\mu\in\mathcal{P}(X):\int_{X}\|x\|^{p}_{X}\mu(dx)<\infty\right\},
\end{equation*}
and
\[
{{\mathbb{W}}_{p}}\left( \mu ,\nu  \right)=\underset{\pi \in \prod \left( \mu ,\nu  \right)}{\mathop{\inf }}\,{{\left( \int_{X\times X}{\left\| x-y \right\|_{X}^{p}\pi \left( dx,dy \right)} \right)}^{\frac{1}{p}}},\quad \forall \mu ,\nu \in {{\mathcal{P}}_{p}}\left( X \right),
\]
 where $\prod \left( \mu ,\nu  \right)$ is the set of all couplings of $\mu$ and $\nu$, the metric ${\mathbb{W}}_{p} $ is called the Wasserstein distance.
  Given $r>0$, define
\begin{equation*}
B_{\mathcal{P}_{p}(X)}(r)=\left\{\mu\in\mathcal{P}_{p}(X):\left(\int_{X}\|x\|_{X}^{p}\mu(dx)\right)^{\frac{1}{p}}\leq r\right\}.
\end{equation*}
A subset $\mathcal E\subseteq {{\mathcal{P}}_{p}}\left( X \right)$ is bounded if there is $r>0$
such that $\mathcal E\in {{B}_{{{\mathcal{P}}_{p}}\left( X \right)}}\left( r \right)$. If $\mathcal E$ is bounded in
${{{\mathcal{P}}_{p}}\left( X \right)}$, then we set
 \[
{{\left\| \mathcal E \right\|}_{{{\mathcal{P}}_{p}}\left( X \right)}}=\underset{\mu \in \mathcal E}{\mathop{\sup }}\,{{\left( \int_{X}{\left\| x \right\|_{X}^{p}\mu \left( dx \right)} \right)}^{\frac{1}{p}}}.
\]
Since $\left( {{\mathcal{P}}_{p}}\left( X \right),{{\mathbb{W}}_{p}} \right)$ is a Polish space, but $\left( {{\mathcal{P}}_{p}}\left( X \right),{{d}_{{{\mathcal{P}}_{p}}\left( X \right)}} \right)$ is not complete. Note that for any $r>0$, ${{B}_{{{\mathcal{P}}_{p}}\left( X \right)}}\left( r \right)$ is a closed subset of ${{\mathcal{P}}_{p}}\left( X \right)$ with respect to the metric ${{d}_{{{\mathcal{P}}_{p}}\left( X \right)}}$, we know that the space $\left( {{B}_{{{\mathcal{P}}_{p}}\left( X \right)}}\left( r \right),{{d}_{{{\mathcal{P}}_{p}}\left( X \right)}} \right)$ is complete for every $r>0$.
\begin{defn}\label{defn1}
A family $S=\{S(t,\tau):t\in\mathbb{R}^{+},\tau\in\mathbb{R}\}$ of mappings from $\mathcal{P}_{p}(X)$ to $\mathcal{P}_{p}(X)$ is called a continuous non-autonomous dynamical system on $\mathcal{P}_{p}(X)$, if for all $\tau\in\mathbb{R}$ and $t,s\in\mathbb{R}^{+}$, the following conditions are satisfied

$(a)$ $S(0,\tau)=I_{\mathcal{P}_{p}(X)}$, where $I_{\mathcal{P}_{p}(X)}$ is the identity operator on $\mathcal{P}_{p}(X)$;

$(b)$ $S(t+s,\tau)=S(t,s+\tau)\circ S(s,\tau)$;

$(c)$ $S(t,\tau):\mathcal{P}_{p}(X)\rightarrow\mathcal{P}_{p}(X)$ is continuous.
\end{defn}
\begin{defn}\label{defn2}
A set $D\subseteq\mathcal{P}_{p}(X)$ is called a bounded subset if there is $r>0$ such that $D\subseteq B_{\mathcal{P}_{p}(X)}(r)$.
\end{defn}
In the sequel, we denote by $\mathcal{D}$ a collection of some families of nonempty subsets of $\mathcal{P}_{p}(X)$ parametrized by $\tau\in\mathbb{R}$, that is,
\begin{equation*}
\mathcal{D}=\left\{D=\{D(\tau)\subseteq\mathcal{P}_{p}(X):D(\tau)\neq\emptyset,\tau\in\mathbb{R}\}:D~\mbox{satisfies~some~conditions}\right\}.
\end{equation*}
\begin{defn}\label{defn3}
A collection $\mathcal{D}$ of some families of nonempty subsets of $\mathcal{P}_{p}(X)$ is said to be neighborhood-closed if for each $D=\{D(\tau):\tau\in\mathbb{R}\}\in\mathcal{D}$, there exists a positive number $\epsilon$ depending on $D$ such that the family
\begin{equation*}
\left\{B(\tau):B(\tau)~\mbox{is~a~nonempty~subset~of}~\mathcal{N}_{\epsilon}(D(\tau)),~\forall\tau\in\mathbb{R}\right\},
\end{equation*}
also belongs to $\mathcal{D}$.
\end{defn}
Note that the neighborhood closedness of $\mathcal{D}$ implies for each $D\in\mathcal{D}$,
\begin{equation}\label{LR3}
\widetilde{D}=\left\{\widetilde{D}(\tau):\emptyset\neq\widetilde{D}(\tau)\subseteq D(\tau),\tau\in\mathbb{R}\right\}\in\mathcal{D}.
\end{equation}
A collection $\mathcal{D}$ satisfying \eqref{LR3} is said to be inclusion-closed in the literature.
\begin{defn}\label{defn4}
A family $K=\{K(\tau):\tau\in\mathbb{R}\}\in\mathcal{D}$ is called a $\mathcal{D}$-pullback absorbing set for $S$ if for each $\tau\in\mathbb{R}$ and every $D\in\mathcal{D}$, there exists $T=T(\tau,D)>0$ such that
\begin{equation*}
S(t,\tau-t)D(\tau-t)\subseteq K(\tau),~~~\mbox{for~all}~t\geq T.
\end{equation*}
\end{defn}
\begin{defn}\label{defn5}
The non-autonomous dynamical system $S$ is said to be $\mathcal{D}$-pullback asymptotically compact in $\mathcal{P}_{p}(X)$ if for each $\tau\in\mathbb{R}$, $\left\{S(t_{n},\tau-t_{n})\mu_{n}\right\}_{n=1}^{\infty}$ has a convergent subsequence in $\mathcal{P}_{p}(X)$ whenever $t_{n}\rightarrow+\infty$ and $\mu_{n}\in D(\tau-t_{n})$ with $D\in\mathcal{D}$.
\end{defn}
\begin{defn}\label{defn6}
A family $\mathcal{A}=\{\mathcal{A}(\tau):\tau\in\mathbb{R}\}\in\mathcal{D}$ is called a $\mathcal{D}$-pullback measure attractor for $S$ if the following conditions are satisfied,

$(i)$ $\mathcal{A}(\tau)$ is compact in $\mathcal{P}_{p}(X)$ for each $\tau\in\mathbb{R}$;

$(ii)$ $\mathcal{A}$ is invariant, that is, $S(t,\tau)\mathcal{A}(\tau)=\mathcal{A}(t+\tau)$, for all $\tau\in\mathbb{R}$ and $t\in\mathbb{R}^{+}$;

$(iii)$ $\mathcal{A}$ attracts every set in $\mathcal{D}$, that is, for each $D=\{D(\tau):\tau\in\mathbb{R}\}\in\mathcal{D}$,
\begin{equation*}
\lim\limits_{t\rightarrow\infty}d(S(t,\tau-t)D(\tau-t),\mathcal{A}(\tau))=0.
\end{equation*}
\end{defn}
\begin{defn}\label{defn7}
A mapping $\psi:\mathbb{R}\times\mathbb{R}\rightarrow\mathcal{P}_{p}(X)$ is called a complete orbit of $S$ if for every $s\in\mathbb{R}$, $t\in\mathbb{R}^{+}$ and $\tau\in\mathbb{R}$, the following holds
\begin{equation}\label{4}
S(t,s+\tau)\psi(s,\tau)=\psi(t+s,\tau).
\end{equation}
In addition, if there exists $D=\{D(\tau):\tau\in\mathbb{R}\}\in\mathcal{D}$ such that $\psi(t,\tau)$ belongs to $D(\tau+t)$ for every $t\in\mathbb{R}$ and $\tau\in\mathbb{R}$, then $\psi$ is called a $\mathcal{D}$-complete orbit of $S$.
\end{defn}
\begin{defn}\label{defn8}
A mapping $\xi:\mathbb{R}\rightarrow\mathcal{P}_{p}(X)$ is called a complete solution of $S$ if for every $t\in\mathbb{R}^{+}$ and $\tau\in\mathbb{R}$, the following holds
\begin{equation*}
S(t,\tau)\xi(\tau)=\xi(t+\tau).
\end{equation*}
In addition, if there exists $D=\{D(\tau):\tau\in\mathbb{R}\}\in\mathcal{D}$ such that $\xi(\tau)$ belongs to $D(\tau)$ for every $\tau\in\mathbb{R}$, then $\xi$ is called a $\mathcal{D}$-complete solution of $S$.
\end{defn}
\begin{defn}\label{defn9}
For each $D=\{D(\tau):\tau\in\mathbb{R}\}\in\mathcal{D}$ and $\tau\in\mathbb{R}$, the pullback $\omega$-limit set of $D$ at $\tau$ is defined by
\begin{equation*}
\omega(D,\tau):=\bigcap\limits_{s\geq0}\overline{\bigcup\limits_{t\geq s}S(t,\tau-t)D(\tau-t)},
\end{equation*}
that is,
\begin{equation*}
\omega(D,\tau)=\left\{\nu\in\mathcal{P}_{p}(X):\mbox{there~exists}~t_{n}\rightarrow\infty,~\mu_{n}\in D(\tau-t_{n})~\mbox{such~that}~ \nu=\lim\limits_{n\rightarrow\infty}S(t_{n},\tau-t_{n})\mu_{n}\right\}.
\end{equation*}
\end{defn}
Based on above notation, we give the following main criterion about the existence
and uniqueness of $\mathcal{D}$-pullback measure attractor \cite{WBC}.
\begin{prop}\label{P1}
Let $\mathcal{D}$ be a neighborhood-closed collection of families of subsets of $\mathcal{P}_{p}(X)$ and $S$ be a continuous non-autonomous dynamical system on $\mathcal{P}_{p}(X)$. Then $S$ has a unique $\mathcal{D}$-pullback measure attractor $\mathcal{A}$ in $\mathcal{P}_{p}(X)$ if and only if $S$ has a closed $\mathcal{D}$-pullback absorbing set $K\in\mathcal{D}$ and $S$ is $\mathcal{D}$-pullback asymptotically compact in $\mathcal{P}_{p}(X)$.
The $\mathcal D$-pullback measure attractor $\mathcal A$    is given by, for each $\tau\in \R$,
\begin{align*}
\begin{split}
\mathcal A\left( \tau \right)  = \omega(K,\tau)
 =\{\psi(0,\tau):\,\psi\,\text{is a}\,\,  \mathcal D \text{-complete orbit of}\,\,S\}\\
 =\{\xi(\tau):\,\xi\,\text{is a}\,\,  \mathcal D \text{-complete solution  of}\,\,S\}.
\end{split}
\end{align*}
\end{prop}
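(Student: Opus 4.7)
The plan is to prove the equivalence in both directions and then verify the three equivalent descriptions of $\mathcal{A}$. Sufficiency is the substantive part: assume $K \in \mathcal{D}$ is a closed $\mathcal{D}$-pullback absorbing set and $S$ is $\mathcal{D}$-pullback asymptotically compact, and define $\mathcal{A}(\tau) := \omega(K,\tau)$ via Definition \ref{defn9}. Compactness of $\mathcal{A}(\tau)$ in $\mathcal{P}_p(X)$ follows by a diagonal argument: elements of $\omega(K,\tau)$ are, by definition, limits of sequences of the form $S(t_n, \tau - t_n)\mu_n$ with $\mu_n \in K(\tau - t_n)$ and $t_n \to \infty$, and $\mathcal{D}$-pullback asymptotic compactness (Definition \ref{defn5}) produces a convergent subsequence of any such sequence, yielding that $\mathcal{A}(\tau)$ is nonempty, sequentially closed, and sequentially compact. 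Membership $\mathcal{A} \in \mathcal{D}$ is deduced from $\mathcal{A}(\tau) \subseteq K(\tau)$ together with the inclusion closedness implied by neighborhood closedness of $\mathcal{D}$, as recorded in \eqref{LR3}.

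For invariance I would prove both inclusions separately. The forward inclusion $S(t,\tau)\mathcal{A}(\tau) \subseteq \mathcal{A}(t+\tau)$ follows by writing an arbitrary $\nu \in \mathcal{A}(\tau)$ as $\nu = \lim_n S(t_n, \tau - t_n)\mu_n$ and using continuity of $S(t,\tau)$ together with the cocycle identity in Definition \ref{defn1}(b) to obtain $S(t,\tau)\nu = \lim_n S(t + t_n, (t+\tau) - (t+t_n))\mu_n \in \omega(K, t+\tau)$. The reverse inclusion is the delicate step: given $\eta \in \mathcal{A}(t+\tau)$, write $\eta = \lim_n S(s_n, (t+\tau) - s_n)\mu_n$, set $t_n = s_n - t$, and apply $\mathcal{D}$-pullback asymptotic compactness to the inner sequence $S(t_n, \tau - t_n)\mu_n$ to extract a subsequential limit $\nu \in \mathcal{A}(\tau)$; continuity of $S(t,\tau)$ then yields $S(t,\tau)\nu = \eta$. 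For the attraction property, I argue by contradiction: if attraction fails at some $\tau$ and $D \in \mathcal{D}$, there exist $\delta > 0$, $t_n \to \infty$ and $\mu_n \in D(\tau - t_n)$ with $d_{\mathcal{P}_p(X)}(S(t_n, \tau - t_n)\mu_n, \mathcal{A}(\tau)) \geq \delta$. Asymptotic compactness produces a subsequential limit $\nu$. Since $K$ absorbs $D$, a diagonal construction choosing $s_j \to \infty$ and then $n_{k_j}$ large enough so that $t_{n_{k_j}} - s_j$ exceeds the absorption time $T(s_j, D)$ realizes $\nu$ as $\lim_j S(s_j, \tau - s_j)\xi_j$ with $\xi_j := S(t_{n_{k_j}} - s_j, \tau - t_{n_{k_j}})\mu_{n_{k_j}} \in K(\tau - s_j)$, so $\nu \in \omega(K,\tau) = \mathcal{A}(\tau)$, contradicting the lower bound $\delta$.

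Necessity is the reverse implication: if $\mathcal{A}$ is a $\mathcal{D}$-pullback measure attractor, then by attraction a closed $\varepsilon$-neighborhood of $\mathcal{A}$ with respect to $d_{\mathcal{P}_p(X)}$ serves as a closed $\mathcal{D}$-pullback absorbing set, its membership in $\mathcal{D}$ being precisely what the neighborhood closedness hypothesis of Definition \ref{defn3} was tailored to provide, while $\mathcal{D}$-pullback asymptotic compactness follows from compactness of $\mathcal{A}(\tau)$ combined with the attraction property. Uniqueness is then immediate since any two such attractors must mutually attract one another while both being compact and invariant. For the three equivalent descriptions, $\mathcal{A}(\tau) = \omega(K,\tau)$ is the defining construction; the complete-orbit description is obtained by starting from $\nu \in \mathcal{A}(\tau)$ and using invariance together with sequential compactness of $\mathcal{A}(\tau - n)$ to inductively select pre-images $\nu_{-n} \in \mathcal{A}(\tau - n)$ with $S(n, \tau - n)\nu_{-n} = \nu$, then interpolating via $\psi(s,\tau) := S(s + N, \tau - N)\nu_{-N}$ for $s \geq -N$ to produce a full $\mathcal{D}$-complete orbit lying inside $\mathcal{A}$; the complete-solution description is the specialization to a single time variable. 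The main obstacle throughout is these backward selection steps (the reverse invariance inclusion and the construction of complete orbits), which cannot exploit a semigroup property on $\mathcal{P}_p(X)$ (as \eqref{a3} warns for the distribution-dependent setting) but must be executed purely through $\mathcal{D}$-pullback asymptotic compactness.
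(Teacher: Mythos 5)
Your argument is essentially correct, but note that the paper itself does not prove Proposition \ref{P1} at all: it is quoted as a known criterion with a citation to \cite{WBC} (and it is the measure-space analogue of the standard sufficient-and-necessary criterion for pullback attractors, as also used in \cite{LW24, Shi}). What you have written is, in effect, a reconstruction of the standard proof behind that citation, and the main steps are sound: defining $\mathcal A(\tau)=\omega(K,\tau)$, getting nonemptiness and compactness from $\mathcal D$-pullback asymptotic compactness via a diagonal extraction, getting $\mathcal A\in\mathcal D$ from $\omega(K,\tau)\subseteq K(\tau)$ (which uses that $K$ absorbs itself and is closed) plus inclusion-closedness \eqref{LR3}, proving the two invariance inclusions exactly as you describe (continuity and the cocycle identity for the forward one, asymptotic compactness for the backward one), the contradiction argument for attraction with the diagonal choice of $\xi_j\in K(\tau-s_j)$ (your absorption time should read $T(\tau-s_j,D)$ rather than $T(s_j,D)$, a harmless slip), and the converse direction plus uniqueness as stated. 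The only place your sketch is genuinely thin is the characterization of $\mathcal A(\tau)$ by complete orbits and complete solutions: you construct, for each $\nu\in\mathcal A(\tau)$, a $\mathcal D$-complete orbit through $\nu$ by backward selection (here plain invariance suffices; compactness of $\mathcal A(\tau-n)$ is not needed), but you do not address the reverse inclusion, namely that $\psi(0,\tau)\in\mathcal A(\tau)$ for every $\mathcal D$-complete orbit $\psi$; this follows by noting $\psi(0,\tau)=S(t,\tau-t)\psi(-t,\tau)$ with $\psi(-t,\tau)\in D(\tau-t)$ for some $D\in\mathcal D$, so attraction and closedness of $\mathcal A(\tau)$ force $\psi(0,\tau)\in\mathcal A(\tau)$, and similarly for complete solutions (the passage from orbits to solutions is also not a mere ``specialization'': the defining identities differ, though the same backward-selection construction handles it). These are routine completions rather than gaps in the method, so your route and the one underlying the cited result coincide.
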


\section{Abstract Formulation of Stochastic Equations}
\setcounter{equation}{0}
In this section, we present some assumptions that will be used in subsequent estimates. Throughout this paper, we set $\delta_{0}$ for the Dirac probability measure at $0$. \\
 \textbf{(H1).\quad Assumption on nonlinear term $f$}.  Suppose $f:\mathbb{R}\times{\mathbb{R}}^{n}\times\mathbb{R}\times\mathcal{P}_{2}({{L}^{2}}\left( {{\mathbb{R}}^{n}} \right))\rightarrow\mathbb{R}$ is continuous and differentiable with respect to the second and third arguments, that is for all $t,u,u_{1},u_{2}\in \mathbb{R}$, $x\in {\mathbb{R}}^{n}$ and $\mu,\mu_{1},\mu_{2}\in {{\mathcal{P}}_{2}}\left( {{L}^{2}}\left( {{\mathbb{R}}^{n}} \right) \right)$,
 \begin{equation}\label{c1}
 	f\left( t,x,0,{{\delta }_{0}} \right)=0,
 \end{equation}
 \begin{equation}\label{c2}
	f\left( t,x,u,\mu  \right)u\ge {{\alpha }_{1}}{{\left\vert u \right\vert}^{p}}-{{\phi }_{1}}\left( t,x \right)\left( 1+{{\left\vert u \right\vert}^{2}} \right)-{{\psi }_{1}}\left( x \right)\mu \left( {{\left\| \cdot  \right\|}^{2}} \right),
\end{equation}
\begin{equation}\label{c3}
	\begin{split}
			 \left\vert f\left( t,x,{{u}_{1}},{{\mu }_{1}} \right)-f\left( t,x,{{u}_{2}},{{\mu }_{2}} \right) \right\vert\le& {{\alpha }_{2}}\left( {{\phi }_{2}}\left( t,x \right)+{{\left\vert {{u}_{1}} \right\vert}^{p-2}}+{{\left\vert {{u}_{2}} \right\vert}^{p-2}} \right)\left\vert {{u}_{1}}-{{u}_{2}} \right\vert \\
		& +{{\phi }_{3}}\left( t,x \right)\mathbb{W}\left( {{\mu }_{1}},{{\mu }_{2}} \right),
	\end{split}
\end{equation}
 \begin{equation}\label{c4}
 \frac{\partial f}{\partial u}\left( t,x,u,\mu  \right)\ge -{{\phi }_{4}}\left( t,x \right),
 \end{equation}
 \begin{equation}\label{c5}
 \left\vert \frac{\partial f}{\partial x}\left( t,x,u,\mu  \right) \right\vert\le {{\phi }_{5}}\left( t,x \right)\left( 1+\left\vert u \right\vert+\sqrt{\mu \left( {{\left\| \cdot  \right\|}^{2}} \right)} \right),
 \end{equation}
 where $p\ge 2$, ${{\alpha }_{1}}>0$, ${{\alpha }_{2}}>0$, ${{\psi }_{1}}\in {{L}^{1}}\left( {\mathbb{R}}^{n} \right)\cap {{L}^{\infty }}\left( {\mathbb{R}}^{n} \right)$ and ${{\phi }_{i}}\in {{L}^{\infty }}\left( {{\mathbb{R}}},{{L}^{1}}\left( {\mathbb{R}}^{n} \right)\cap {{L}^{\infty }}\left( {\mathbb{R}}^{n} \right) \right)$ for $i=1,2,3,4,5$.

 By \eqref{c1}-\eqref{c3} we can get that for all  $t,u\in \mathbb{R}$, $x\in {\mathbb{R}}^{n}$ and $\mu \in {{\mathcal{P}}_{2}}\left( {{L}^{2}}\left( {{\mathbb{R}}^{n}} \right) \right)$,
 \begin{equation}\label{c6}
 	\left\vert f\left( t,x,u,\mu  \right) \right\vert\le {{\alpha }_{3}}{{\left\vert u \right\vert}^{p-1}}+{{\phi }_{6}}\left( t,x \right)\left( 1+\sqrt{\mu \left( {{\left\| \cdot  \right\|}^{2}} \right)} \right),
 \end{equation}
 where ${{\alpha }_{3}}>0$ and ${{\phi }_{6}}\in {{L}^{\infty }}\left( {{\mathbb{R}}},{{L}^{1}}\left( {\mathbb{R}}^{n} \right)\cap {{L}^{\infty }}\left( {\mathbb{R}}^{n} \right) \right)$.\\
 \textbf{(H2).\quad Assumption on nonlinear terms $G_{1}$}. Suppose $G_{1}:\mathbb{R}\times{\mathbb{R}}^{n}\times\mathbb{R}\times\mathcal{P}_{2}({{L}^{2}}\left( {{\mathbb{R}}^{n}} \right))\rightarrow\mathbb{R}$ is continuous and differentiable with respect to the second and third arguments, that is for all $t,u,u_{1},u_{2}\in \mathbb{R}$, $x\in {\mathbb{R}}^{n}$ and $\mu,\mu_{1},\mu_{2}\in {{\mathcal{P}}_{2}}\left( {{L}^{2}}\left( {{\mathbb{R}}^{n}} \right) \right)$,
\begin{equation}\label{c7}
\left\vert {{G}_{1}}\left( t,x,u,\mu  \right) \right\vert\le {{\phi }_{g}}\left( t,x \right)+{{\phi }_{7}}\left( t,x \right)\left\vert u \right\vert+{{\psi }_{g}}\sqrt{\mu \left( {{\left\| \cdot  \right\|}^{2}} \right)},
\end{equation}
\begin{equation}\label{c8}
\left\vert {{G}_{1}}\left( t,x,{{u}_{1}},{{\mu }_{1}} \right)-{{G}_{1}}\left( t,x,{{u}_{2}},{{\mu }_{2}} \right) \right\vert\le {{\phi }_{7}}\left( t,x \right)\left( \left\vert {{u}_{1}}-{{u}_{2}} \right\vert+{{\mathbb{W}}_{2}}\left( {{\mu }_{1}},{{\mu }_{2}} \right) \right),
\end{equation}
\begin{equation}\label{c9}
	\left\vert \frac{\partial {{G}_{1}}}{\partial x}\left( t,x,u,\mu  \right) \right\vert\le {{\phi }_{8}}\left( t,x \right)+{{\phi }_{7}}\left( t,x \right)\left( \left\vert u \right\vert+\sqrt{\mu \left( {{\left\| \cdot  \right\|}^{2}} \right)} \right),
\end{equation}
where ${{\phi }_{7}}\in {{L}^{\infty }}\left( {{\mathbb{R}}},{{L}^{1}}\left( {\mathbb{R}}^{n} \right)\cap {{L}^{\infty }}\left( {\mathbb{R}}^{n} \right) \right)$, ${{\psi }_{g}}\in {{L}^{1}}\left( {\mathbb{R}}^{n} \right)\cap {{L}^{\infty }}\left({\mathbb{R}}^{n} \right)$ and ${{\phi }_{g}}$, ${{\phi }_{8}}\in L_{loc}^{2}\left( \mathbb{R},{{L}^{2}}\left( {\mathbb{R}}^{n} \right) \right)$.

By (3.8) we can get that  for all  $t,u\in \mathbb{R}$, $x\in {\mathbb{R}}^{n}$ and $\mu \in {{\mathcal{P}}_{2}}\left( {{L}^{2}}\left( {{\mathbb{R}}^{n}} \right) \right)$,
\begin{equation}\label{c10}
 \left\vert \frac{\partial {{G}_{1}}}{\partial u}\left( t,x,u,\mu  \right) \right\vert\le {{\phi }_{7}}\left( t,x \right).
\end{equation} \\
 \textbf{(H3).\quad Assumption on diffusion term $\theta_{i}$}. Suppose the function ${{\theta }_{i}}=\left\{ {{\theta }_{i,k}} \right\}_{k=1}^{\infty }:\mathbb{R}\to {{L}^{2}}\left( {\mathbb{R}}^{n},{{l}^{2}} \right),\,i=1,2$, are continuous and satisfy
\begin{equation}\label{c11}
\sum\limits_{k=1}^{\infty }{\int_{t}^{t+1}{{{\left\| \nabla {{\theta }_{i,k}}\left( s \right) \right\|}^{2}}}ds}<\infty ,\quad \forall t\in \mathbb{R},\,i=1,2.
\end{equation}\\
\textbf{(H4).\quad Assumption on diffusion term $\sigma_{k}$}. Suppose for every $k\in \mathbb{N}$, $\sigma_{k}$ is continuous and ${{\sigma _{k}}\left( {t,u,\mu } \right)}$ is differentiable in $u$ and Lipschitz continuous in both $u$ and $\mu$ uniformly for $t\in \mathbb{R}$, that is for all  $t,u,u_{1},u_{2}\in \mathbb{R}$ and $\mu ,\mu_{1},\mu_{2}\in {{\mathcal{P}}_{2}}\left( {{L}^{2}}\left( {{\mathbb{R}}^{n}} \right) \right)$,
\begin{equation}\label{c12}
\left\vert {{\sigma }_{k}}\left( t,u,\mu  \right) \right\vert\le {{\beta }_{1,k}}\left( 1+\sqrt{\mu \left( {{\left\| \cdot  \right\|}^{2}} \right)} \right)+{{\gamma }_{1,k}}\left\vert u \right\vert,
\end{equation}
and
\begin{equation}\label{c13}
\left\vert {{\sigma }_{k}}\left( t,{{u}_{1}},{{\mu }_{1}} \right)-{{\sigma }_{k}}\left( t,{{u}_{2}},{{\mu }_{2}} \right) \right\vert\le {{L}_{\sigma ,k}}\left( \left\vert {{u}_{1}}-{{u}_{2}} \right\vert+{{\mathbb{W}}_{2}}\left( {{\mu }_{1}},{{\mu }_{2}} \right) \right),
\end{equation}
where ${{\beta }_{1}}=\left\{ {{\beta }_{1,k}} \right\}_{k=1}^{\infty }$ and ${{\gamma }_{1}}
=\left\{ {{\gamma }_{1,k}} \right\}_{k=1}^{\infty }$ are nonnegative sequences
with $\sum\limits_{k=1}^{\infty }{\left( \beta _{1,k}^{2}+\gamma _{1,k}^{2} \right)}<\infty $
and ${{L}_{\sigma }}=\left\{ {{L}_{\sigma ,k}} \right\}_{k=1}^{\infty }$
is a sequence of nonnegative numbers satisfy $\sum\limits_{k=1}^{\infty }{L_{\sigma ,k}^{2}}<\infty $.

 From \eqref{c13} we can get that for all $t\in \mathbb{R}$, $u\in {{L}^{2}}\left( {{\mathbb{R}}^{n}} \right)$ and $\mu\in {{\mathcal{P}}_{2}}\left( {{L}^{2}}\left( {{\mathbb{R}}^{n}} \right) \right)$,
 \begin{align}\label{c14}
 	\begin{split}
 \left\vert \frac{\partial {{\sigma }_{k}}}{\partial u}\left( t,u,\mu  \right) \right\vert\le {{L}_{{{\sigma }},k}}.
	\end{split}
\end{align}
Denote by $l^{2}$ the space of square summable sequences of real numbers. For all $t\in \mathbb{R}$, $u,v\in {{L}^{2}}\left( {{\mathbb{R}}^{n}} \right)$ and $\mu\in {{\mathcal{P}}_{2}}\left( {{L}^{2}}\left( {{\mathbb{R}}^{n}} \right) \right)$, define  maps $\sigma(t,u,\mu),\delta(t,v):l^{2}\rightarrow {{L}^{2}}\left( {{\mathbb{R}}^{n}} \right)$ by
\begin{equation}\label{c15}
		{{\sigma }}\left( t,u,\mu  \right)\left( \eta  \right)\left( x \right)=\sum\limits_{k=1}^{\infty }{\left( {{\theta }_{1,k}}\left( t,x \right)+w\left( x \right){{\sigma }_{k}}\left( t,u\left( x \right),\mu  \right) \right){{\eta }_{k}}},\quad \forall \eta =\left\{ {{\eta }_{k}} \right\}_{k=1}^{\infty }\in {{l}^{2}},\,x\in {{\mathbb{R}}^{n}},
\end{equation}
and
\begin{equation}\label{c+}
{{\delta }}\left( t,v  \right)(\eta)\left( x \right)=\sum\limits_{k=1}^{\infty }{\left( {{\theta }_{2,k}}\left( t,x \right)+{{\delta}_{k}}v\left( t \right) \right){{\eta }_{k}}},\quad \forall \eta =\left\{ {{\eta }_{k}} \right\}_{k=1}^{\infty }\in {{l}^{2}},\, x\in {{\mathbb{R}}^{n}}.
\end{equation}			
 Let $L_{2}(l^{2},{{L}^{2}}\left( {{\mathbb{R}}^{n}} \right))$ be the space of Hilbert-Schmidt operators from $l^{2}$ to ${{L}^{2}}\left( {{\mathbb{R}}^{n}} \right)$ with norm ${{\left\| \cdot  \right\|}_{{{L}_{2}}\left( {{l}^{2}},{{L}^{2}}\left( {{\mathbb{R}}^{n}} \right) \right)}}$.  It follows from \eqref{c11}-\eqref{c12}, \eqref{c15}-\eqref{c+} we can get
 \begin{equation}\label{c16}
 	 \left\| {{\sigma }}\left( t,u,\mu  \right) \right\|_{{{L}_{2}}\left( {{l}^{2}},{{L}^{2}}\left( {{\mathbb{R}}^{n}} \right) \right)}^{2}\le2\left\| {{\theta }_{1}}\left( t \right) \right\|_{{{L}^{2}}\left( {{\mathbb{R}}^{n}},l^{2} \right)}^{2}+8{{\left\| w \right\|}^{2}}\left\| {{\beta }_{1}} \right\|_{{{l}^{2}}}^{2}\left( 1+\mu \left( {{\left\| \cdot  \right\|}^{2}} \right) \right)+4\left\| w \right\|_{{{L}^{\infty }}\left( {{\mathbb{R}}^{n}} \right)}^{2}\left\| {{\gamma }_{1}} \right\|_{{{l}^{2}}}^{2}{{\left\| u \right\|}^{2}}
\end{equation}
and
\begin{equation}\label{c17}
\left\| {{\delta }}\left( t,v \right) \right\|_{{{L}_{2}}\left( {{l}^{2}},{{L}^{2}}\left( {{\mathbb{R}}^{n}} \right) \right)}^{2}\le 2\left\| {{\theta }_{2}}\left( t \right) \right\|_{{{L}^{2}}\left( {{\mathbb{R}}^{n}},l^{2} \right)}^{2}+2\|\delta\|^2_{l^2}{{\left\| v \right\|}^{2}},
\end{equation}
where $\delta =\left\{ {{\delta }_{k}} \right\}_{k=1}^{\infty }$ is a sequence of  nonnegative  numbers satisfy $2\sum\limits_{k=1}^{\infty }{{{\left\vert {{\delta }_{k}} \right\vert}^{2}}}<\gamma <\infty$.
 Besides, by \eqref{c13} and \eqref{c17} we can get that
 for all  $t,u_{1},u_{2},v_{1},v_{2}\in \mathbb{R}$ and $\mu ,\mu_{1},\mu_{2}\in {{\mathcal{P}}_{2}}\left( {{L}^{2}}\left( {{\mathbb{R}}^{n}} \right) \right)$,
 \begin{align}\label{c00}
 	\begin{split}
 		 & \left\| {{\sigma }}\left( t,{{u}_{1}},{{\mu }_{1}} \right)-{{\sigma }}\left( t,{{u}_{2}},{{\mu }_{2}} \right) \right\|_{{{L}_{2}}\left( {{l}^{2}},{{L}^{2}}\left( {{\mathbb{R}}^{n}} \right) \right)}^{2} \\
 	\le	&  2\left\| {{L}_{{\sigma }}} \right\|_{{{l}^{2}}}^{2}\left( \left\| w \right\|_{{{L}^{\infty }}\left( {{\mathbb{R}}^{n}} \right)}^{2}{{\left\| {{u}_{1}}-{{u}_{2}} \right\|}^{2}}+{{\left\| w \right\|}^{2}}\mathbb{W}_{2}^{2}\left( {{\mu }_{1}},{{\mu }_{2}} \right) \right),
 	\end{split}
 \end{align}
 and
 \begin{align}\label{c01}
 	\begin{split}
 \left\| \delta \left( t,{{v}_{1}} \right)-\delta \left( t,{{v}_{2}} \right) \right\|_{{{L}_{2}}\left( {{l}^{2}},{{L}^{2}}\left( {{\mathbb{R}}^{n}} \right) \right)}^{2}\le \left\| \delta \right\|_{{{l}^{2}}}^{2}\left( {{\left\| {{v}_{1}}-{{v}_{2}} \right\|}^{2}} \right).
 	\end{split}
\end{align}
  We now reformulate problem \eqref{a1}-\eqref{a2} as follows
 \begin{equation}\label{c18}
 	\left\{ \begin{array}{l}
 		 du(t)-\Delta u(t)dt+\lambda u(t)dt+\alpha v(t)dt+{{f}}\left( t,x,u,{{\mathcal{L}}_{u\left( t \right)}} \right)dt={{G}_{1}}\left( t,x,u,{{\mathcal{L}}_{u\left( t \right)}} \right)dt \\
 		 +{{\sigma }}\left( t,u,{{\mathcal{L}}_{u\left( t \right)}} \right)d{{W}}\left( t \right),\quad  t>\tau , \\
 		 dv\left( t \right)+\gamma  v\left( t \right)dt-\beta  u\left( t \right)dt= {{G}_{2}}\left( t,x \right)dt
 		 +{\delta}\left( t,v \right)dW\left( t \right),\quad t>\tau,
 	\end{array} \right.
 \end{equation}
 with initial condition
 \begin{align}\label{c19}
 	u\left( \tau \right)={{\xi }_{1}},\ v\left( \tau \right)={{\xi }_{2}}.
 \end{align}
For a Banach space $X$ and $\tau \in \mathbb{R}$, we use  $L_{{{\mathcal{F}}_{\tau }}}^{2}\left( X \right)$ to denote the space of all ${{\mathcal{F}}_{\tau }}$ -measurable, $X$-valued random variables $\varphi$ with $\mathbb{E}\left\| \varphi  \right\|_{X}^{2}<\infty$, where $\mathbb{E}$ means the mathematical expectation. In the sequel, we assume $c_{i}, i\in\mathbb{N}$,  are positive constants depend on $\tau$, but not on $D_{1}$  and  we also assume that the coefficient $\lambda$ is sufficiently large such that there exists a sufficiently small number $\eta\in(0,1)$,
\begin{align}\label{c21}
	\begin{split}
		 2\lambda -5\eta >&24{{\left\| w \right\|}^{2}}\left\| {{\beta }_{1}} \right\|_{{{l}^{2}}}^{2}+12\left\| w \right\|_{{{L}^{\infty }}\left( {{\mathbb{R}}^{n}} \right)}^{2}\left\| {{\gamma }_{1}} \right\|_{{{l}^{2}}}^{2}+2{{\left\| {{\phi }_{1}} \right\|}_{{{L}^{\infty }}\left( \mathbb{R},{{L}^{\infty }}\left( {{\mathbb{R}}^{n}} \right) \right)}}+2{{\left\| {{\psi }_{1}} \right\|}_{{{L}^{1}}\left( {{\mathbb{R}}^{n}} \right)}} \\
		& +2{{\left\| {{\phi }_{7}} \right\|}_{{{L}^{\infty }}\left( \mathbb{R},{{L}^{1}}\left( {{\mathbb{R}}^{n}} \right)\cap {{L}^{\infty }}\left( {{\mathbb{R}}^{n}} \right) \right)}}+{{\left\| {{\psi }_{g}} \right\|}_{{{L}^{1}}\left( {{\mathbb{R}}^{n}} \right)\cap {{L}^{\infty }}\left( {{\mathbb{R}}^{n}} \right)}}+6\|\delta\|^2_{l^2}.
		\end{split}
	\end{align}
 Given a subset $E$ of ${{\mathcal{P}}_{2}}\left( \mathbb L^2(\R^n) \right)$, which is denoted by
\[
{{\left\| E \right\|}_{{{\mathcal{P}}_{2}}\left( \mathbb L^2(\R^n) \right)}}=\inf \left\{ r>0:\underset{\mu \in E}{\mathop{\sup }}\,{{\left( \int_{\mathbb L^2(\R^n)}{\left\| z \right\|_{\mathbb L^2(\R^n)}^{2}\mu \left( dz \right)} \right)}^{\frac{1}{2}}}\le r \right\},
\]
with the convention that $\inf \emptyset =\infty$. If $E$ is a bounded subset of ${{\mathcal{P}}_{2}}\left( \mathbb L^2(\R^n) \right)$, then ${{\left\| E \right\|}_{{{\mathcal{P}}_{2}}\left( \mathbb L^2(\R^n) \right)}}<\infty $. Denote ${{D}_{1}}=\left\{ D\left( \tau  \right):\tau \in \mathbb{R}, D\left( \tau  \right)\ \text{is a bounded nonempty subset of}\ {{\mathcal{P}}_{2}}\left( \mathbb L^2(\R^n) \right) \right\}$ and ${{D}_{2}}=\left\{ D\left( \tau  \right):\tau \in \mathbb{R}, D\left( \tau  \right)\ \text{is a bounded nonempty subset of}\ {{\mathcal{P}}_{4}}\left( \mathbb L^2(\R^n) \right) \right\}$.
Let ${{\mathcal{D}}_{0}}$ be the collection of all such familities ${{D}_{1}}$ which further satisfy
\begin{align}\label{c22}
	\begin{split}
		\underset{\tau \to -\infty }{\mathop{\lim }}\,{{e}^{\eta \tau }}\left\| {{D}_{1}}\left( \tau  \right) \right\|_{{{\mathcal{P}}_{2}}\left( \mathbb L^2(\R^n)\right)}^{2}=0,
	\end{split}
\end{align}
where $\eta $ is the same number as in \eqref{c21}. Similarly denote by $\mathcal{D}$ the collection of all such families ${{D}_{2}}$ which further satisfy
\begin{align}\label{c23}
	\begin{split}
		\underset{\tau \to -\infty }{\mathop{\lim }}\,{{e}^{2\eta \tau }}\left\| {{D}_{2}}\left( \tau  \right) \right\|_{{{\mathcal{P}}_{4}}\left( \mathbb L^2(\R^n) \right)}^{4}=0.
	\end{split}
\end{align}
It is evident $\mathcal{D}\in {{\mathcal{D}}_{0}}$ .
We also assume the following condition in order to deriving uniform estimates of solutions
\begin{equation}\label{c24}
		\int_{-\infty }^{\tau }{{{e}^{\eta s}}\left( \left\| {{\phi }_{g}}\left( s \right)  \right\|_{{{L}^{2}}\left( {\mathbb{R}}^{n} \right)}^{2}+\left\| {{\theta }_{1}}\left( s \right)   \right\|_{{{L}^{2}}\left( {\mathbb{R}}^{n},{{l}^{2}} \right)}^{2}+\left\| {{\theta }_{2}}\left( s \right)  \right\|_{{{L}^{2}}\left( {\mathbb{R}}^{n},{{l}^{2}} \right)}^{2} \right)}ds<\infty ,\quad \forall \tau \in \mathbb{R},
\end{equation}
and
\begin{equation}\label{c25}
		\int_{-\infty }^{\tau }{{{e}^{\eta s}}\left( \left\| {{\phi }_{g}}\left( s \right) \right\|_{{{L}^{2}}\left( {{\mathbb{R}}^{n}} \right)}^{4}+\left\| {{\theta }_{1}}\left( s \right) \right\|_{{{L}^{2}}\left( {{\mathbb{R}}^{n}},{{l}^{2}} \right)}^{4}+\left\| {{\theta }_{2}}\left( s \right) \right\|_{{{L}^{2}}\left( {{\mathbb{R}}^{n}},{{l}^{2}} \right)}^{4} \right)}ds<\infty  ,\quad \forall \tau \in \mathbb{R}.
\end{equation}

\section{Well-posedness of Stochastic Equations}
\setcounter{equation}{0}
In this section, we give definition of  the solutions of the system \eqref{c18}-\eqref{c19} and establish the existence and uniqueness of solutions by Banach fixed-point theorem.

Firstly, we show the definition of solutions of system \eqref{c18}-\eqref{c19}.
\begin{defn}\label{defn m1}
	For every $\tau \in \mathbb{R}$ and  ${{\xi }_{0}}=\left( {{\xi }_{1}},{{\xi }_{2}} \right)\in L_{{{\mathcal{F}}_{\tau}}}^{2}\left( \Omega ,{{\mathbb{L}}^{2}}\left( {{\mathbb{R}}^{n}} \right) \right)$, a continuous stochastic process ${{k}}=\left( {{u}},{{v}} \right)$ is called a solution of system \eqref{c18}-\eqref{c19} if for every $T>0$,
	\[
		k\in {{L}^{2}}\left( \Omega ,C[\tau ,\tau +T],{{\mathbb{L}}^{2}}\left( {{\mathbb{R}}^{n}} \right) \right)\quad \text{with}
	\]
\[
u\in {{L}^{2}}\left( \Omega ,{{L}^{2}}\left( \tau ,\tau +T;{{H}^{1}}\left( {{\mathbb{R}}^{n}} \right) \right) \right)\cap {{L}^{p}}\left( \Omega ,{{L}^{p}}\left( \tau ,\tau +T;{{L}^{p}}\left( {{\mathbb{R}}^{n}} \right) \right) \right),
\]
such that for all $t\ge \tau $ and $\zeta \in {{H}^{1}}\left( {{\mathbb{R}}^{n}} \right)\cap {{L}^{p}}\left( {{\mathbb{R}}^{n}} \right)$,
\begin{align*}
	& \left( u\left( t \right),\zeta  \right)-\int_{\tau }^{t}{\left( \Delta u(s),\Delta \zeta  \right)}ds+\int_{\tau }^{t}{\left( \lambda u(s)+\alpha v\left( s \right),\zeta  \right)}ds  +\int_{\tau }^{t}{\int_{{{\mathbb{R}}^{n}}}{f\left( s,x,u\left( s \right),{{\mathcal{L}}_{u\left( s \right)}} \right)}}\zeta \left( x \right)dxds \\
	 =&\left( {{\xi }_{1}},\zeta  \right)+\int_{\tau }^{t}{\int_{{{\mathbb{R}}^{n}}}{{{G}_{1}}\left( s,x,u,{{\mathcal{L}}_{u\left( s \right)}} \right)\zeta \left( x \right)}}dxds+\int_{\tau }^{t}{\left( \sigma \left( s,u,{{\mathcal{L}}_{u\left( s \right)}} \right),\zeta  \right)}dW\left( s \right),
\end{align*}
and
\begin{align*}
	\left( v\left( t \right),\zeta  \right)+\int_{\tau }^{t}{\left( \gamma v\left( s \right)-\beta u\left( s \right),\zeta  \right)ds}=\left( {{\xi }_{2}},\zeta  \right)+\int_{\tau }^{t}{\left( {{G}_{2}}\left( s,x \right),\zeta  \right)ds}+\int_{\tau }^{t}{\left( \delta \left( s,v\left( s \right) \right),\zeta  \right)dW\left( s \right)},
\end{align*}
$\mathbb{P}$-almost surely, where $k\left( t,\tau ,{{\xi }_{0}} \right)=\left( u\left( t,\tau ,{{\xi }_{0}} \right),v\left( t,\tau ,{{\xi }_{0}} \right) \right)$ or $k\left( t \right)=\left( u\left( t \right),v\left( t \right) \right)$, and $\zeta $ in the stochastic terms is identified with the element in ${{\left( {{L}^{2}}\left( {{\mathbb{R}}^{n}} \right) \right)}^{*}}={{L}^{2}}\left( {{\mathbb{R}}^{n}} \right)$ in view of Riesz’s representation theorem.

Note that if $k=\left( u,v \right)$ is a solution of system \eqref{c18}-\eqref{c19} in the sense of Definition 4.1, then $\Delta u\in {{L}^{2}}\left( \Omega ,{{L}^{2}}\left( \tau ,\tau +T;{{\left( {{H}^{1}}\left( {{\mathbb{R}}^{n}} \right) \right)}^{*}} \right) \right)$ and $f\in {{L}^{q}}\left( \Omega ,{{L}^{q}}\left( \tau ,\tau +T;{{L}^{q}}\left( {{\mathbb{R}}^{n}} \right) \right) \right)$ with $\frac{1}{p}+\frac{1}{q}=1$, and hence for all $t\ge \tau $,
\begin{align}\label{m1}
	\begin{split}
	& u\left( t \right)-\int_{\tau }^{t}{\Delta u(s)}ds+\int_{\tau }^{t}{\left( \lambda u(s)+\alpha v\left( s \right) \right)}ds+\int_{\tau }^{t}{f\left( s,x,u\left( s \right),{{\mathcal{L}}_{u\left( s \right)}} \right)}ds \\
	 =&{{\xi }_{1}}+\int_{\tau }^{t}{{{G}_{1}}\left( s,x,u,{{\mathcal{L}}_{u\left( s \right)}} \right)}ds+\int_{\tau }^{t}{\sigma \left( s,u,{{\mathcal{L}}_{u\left( s \right)}} \right)}dW\left( s \right)\ \text{in}\ {{\left( {{H}^{1}}\left( {{\mathbb{R}}^{n}} \right)\cap {{L}^{p}}\left( {{\mathbb{R}}^{n}} \right) \right)}^{*}},
	\end{split}
\end{align}
and
\begin{align}\label{m2}
	\begin{split}
v\left( t \right)+\int_{\tau }^{t}{\left( \gamma v\left( s \right)-\beta u\left( s \right) \right)ds}={{\xi }_{2}}+\int_{\tau }^{t}{{{G}_{2}}\left( s,x \right)ds}+\int_{\tau }^{t}{\delta \left( s,v\left( s \right) \right)dW\left( s \right)}\ \text{in}\  {{L}^{2}}\left( {{\mathbb{R}}^{n}} \right),
	\end{split}
\end{align}
$\mathbb{P}$-almost surely.
\end{defn}
Next we will prove the existence and uniqueness of solutions of the system \eqref{c18}-\eqref{c19} by Banach fixed-point theorem.
\begin{thm}\label{lemm1}
	Suppose assumption  $\mathbf{(H_{1})-({H_{4}})}$ hold, then for every $\tau\in \mathbb{R}$ and $\xi _{0}\in L_{{{\mathcal{F}}_{\tau }}}^{2}\left( \Omega ,\mathbb L^2(\R^n) \right)$, the system  \eqref{c18}-\eqref{c19} has a unique solution $k=(u,v)$ in the sense of Definition 4.1
	such that
		\begin{align}\label{m3}
		\begin{split}
			& \beta  {{\left\| u\left( t \right) \right\|}^{2}}+\alpha {{\left\| v\left( t \right) \right\|}^{2}}+2\beta  \int_{\tau }^{t}{\left\| \nabla u\left( s \right) \right\|^{2}}ds  +2\beta \lambda  \int_{\tau }^{t}{{{\left\| u\left( s \right) \right\|}^{2}}}ds+2\alpha \gamma  \int_{\tau }^{t}{{{\left\| v\left( s \right) \right\|}^{2}}}ds \\
			& +2\beta  \int_{\tau }^{t}{\left( f\left( s,x,u\left( s \right),{{\mathcal{L}}_{u\left( s \right)}} \right),u\left( s \right) \right)}ds \\
			=&\beta  {{\left\| {{\xi }_{1}} \right\|}^{2}}+\alpha {{\left\| {{\xi }_{2}} \right\|}^{2}}+2\alpha  \int_{\tau }^{t}{\left( {{G}_{2 }}\left( s\right),v\left( s \right) \right)}ds  +2\beta  \int_{\tau }^{t}{\left( {{G}_{1}}\left( s,x,u\left( s \right),{{\mathcal{L}}_{u\left( s \right)}} \right),u\left( s \right) \right)}ds \\
			& +\alpha  \int_{\tau }^{t}{\left\| {\delta}\left( s,v\left( s \right) \right) \right\|_{{{L}_{2}}\left( {{l}^{2}},{{L}^{2}}\left( {{\mathbb{R}}^{n}} \right) \right)}^{2}}ds+\beta  \int_{\tau }^{t}{\left\| {{\sigma }}\left( s,u\left( s \right),{{\mathcal{L}}_{u\left( s \right)}} \right) \right\|_{{{L}_{2}}\left( {{l}^{2}},{{L}^{2}}\left( {{\mathbb{R}}^{n}} \right) \right)}^{2}}ds \\
			&+2\beta  \int_{\tau }^{t}{\left( {{\sigma }}\left( s,u\left( s,x \right),{{\mathcal{L}}_{u\left( s \right)}} \right),u\left( s \right) \right)}dW\left( s \right)+2\alpha  \int_{\tau }^{t}{\left( {\delta}\left( s,v\left( s,x \right) \right),v\left( s \right) \right)}dW\left( s \right),
		\end{split}
	\end{align}
for all $t\in \left[ \tau ,\tau +T \right]$, $\mathbb{P}$-almost surely.
\end{thm}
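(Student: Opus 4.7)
The strategy is to decouple the distribution dependence by a frozen-measure substitution and apply the Banach fixed-point theorem to the resulting map on measure-valued paths. Fix $T>0$ and let $\mathcal{S}_{T,\tau}$ denote the complete metric space of continuous maps $\mu:[\tau,\tau+T]\to\mathcal{P}_2(\mathbb{L}^2(\mathbb{R}^n))$ with second moment uniformly bounded by some $R$ (depending on $\mathbb{E}\|\xi_0\|^2$ and the data), endowed with a weighted Wasserstein metric $\rho_\kappa(\mu,\nu)=\sup_{t\in[\tau,\tau+T]}e^{-\kappa(t-\tau)}\mathbb{W}_2(\mu(t),\nu(t))$ with $\kappa>0$ to be chosen. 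For each such $\mu$, substitute $\mathcal{L}_{u(s)}\mapsto\mu(s)$ everywhere in \eqref{c18} to obtain a classical (non-distribution-dependent) stochastic system whose coefficients are standard Carathéodory functionals of the unknown.

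The first step is to solve this frozen problem uniquely. On the Gelfand triple $V:=H^1(\mathbb{R}^n)\cap L^p(\mathbb{R}^n)\hookrightarrow L^2(\mathbb{R}^n)\hookrightarrow V^*$ the drift of the $u$-equation satisfies the coercivity, monotonicity, hemicontinuity and growth conditions required by the variational theory for monotone SPDEs: coercivity and growth come from \eqref{c2} and \eqref{c6}, local monotonicity of $f$ follows from \eqref{c4}, the Laplacian and the linear $\alpha v$ coupling are trivially monotone, and the noise is bounded via \eqref{c16}, \eqref{c00}. The $v$-equation, being linear in $v$ with $u$-driven affine drift and affine noise \eqref{c17}, \eqref{c01}, is solved by standard linear SDE theory once $u$ is known. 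Coupling the two within the variational framework (Galerkin plus monotonicity) yields a unique $k^\mu=(u^\mu,v^\mu)$ having the regularity stated in Definition \ref{defn m1}. Itô's formula in the Gelfand-triple sense applied to $\beta\|u^\mu(t)\|^2+\alpha\|v^\mu(t)\|^2$ then produces \eqref{m3} directly, the key algebraic point being that the coupling contributions $2\beta\alpha(v^\mu,u^\mu)$ and $-2\alpha\beta(u^\mu,v^\mu)$ cancel exactly.

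With the frozen-measure solution at hand, define $\Phi:\mathcal{S}_{T,\tau}\to\mathcal{S}_{T,\tau}$ by $\Phi(\mu)(t):=\mathcal{L}_{k^\mu(t)}$; continuity in $t$ follows from mean-square continuity of $k^\mu$, and invariance of the ball of radius $R$ follows from Gronwall applied to \eqref{m3} via \eqref{c16}, \eqref{c17}, \eqref{c2} and \eqref{c21}, provided $R$ is chosen large enough. For the contraction, take $\mu,\nu\in\mathcal{S}_{T,\tau}$ and apply Itô to $\beta\|u^\mu-u^\nu\|^2+\alpha\|v^\mu-v^\nu\|^2$. Monotonicity \eqref{c4} controls the $f$-difference (up to a linear term), the Lipschitz estimate \eqref{c3} contributes a $\mathbb{W}_2^2(\mu,\nu)$ term via $\phi_3$, and \eqref{c8}, \eqref{c00}, \eqref{c01} similarly handle $G_1$, $\sigma$ and $\delta$; after taking expectation (the martingale parts vanish) and invoking Gronwall one obtains $\mathbb{E}\|k^\mu(t)-k^\nu(t)\|^2\leq C\int_\tau^t e^{C(t-s)}\mathbb{W}_2^2(\mu(s),\nu(s))\,ds$. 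Since $\mathbb{W}_2^2(\Phi(\mu)(t),\Phi(\nu)(t))\leq\mathbb{E}\|k^\mu(t)-k^\nu(t)\|^2$, choosing $\kappa$ large enough makes $\Phi$ a strict contraction in $\rho_\kappa$, and the Banach fixed-point theorem delivers a unique fixed point, hence a unique solution on $[\tau,\tau+T]$. Concatenation over successive intervals extends the solution globally.

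\textbf{Main obstacle.} The most delicate step is verifying the coercivity and monotonicity hypotheses for the frozen system when $f$ carries supercritical growth $p\geq 2$: one has to combine the dissipativity inequality \eqref{c2} with the noise bounds \eqref{c16}-\eqref{c17} and rely on the balance condition \eqref{c21} to absorb the noise-induced quadratic terms as well as the $\psi_1$ and $\phi_7$ contributions into the principal part, thus obtaining strict energy dissipation. This is precisely what makes the ball of radius $R$ invariant and the contraction estimate close. A secondary subtlety is the correct choice of the weights $\beta,\alpha$ in the energy functional, which is what forces the $u$-$v$ cross terms to cancel and which allows \eqref{m3} to be read off directly from Itô's formula.
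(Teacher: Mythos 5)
Your proposal follows essentially the same route as the paper: freeze the measure, solve the resulting classical stochastic system by the variational/monotone framework (the paper simply cites Wang's Theorem 6.3 for this step), and then run a Banach fixed-point argument for $\mu\mapsto\mathcal{L}_{k^{\mu}}$ on $C\left([\tau,\tau+T],\mathcal{P}_{2}\left(\mathbb{L}^{2}\left(\mathbb{R}^{n}\right)\right)\right)$ with an exponentially weighted Wasserstein metric, the contraction coming from It\^o's formula applied to $\beta\|u^{\mu}-u^{\nu}\|^{2}+\alpha\|v^{\mu}-v^{\nu}\|^{2}$ plus Gronwall, exactly as you describe. The only cosmetic difference is that you restrict to an invariant ball and invoke \eqref{c21} there, which is unnecessary on a finite time interval: the paper contracts on the whole path space, with the $T$-dependent Gronwall constant absorbed by taking the exponential weight large.
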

\begin{proof}
 Given $\xi _{0}\in L_{{{\mathcal{F}}_{\tau }}}^{2}\left( \Omega , \mathbb L^2(\R^n) \right)$ and $\mu=(\mu_1,\mu_2) \in C\left( \left[ \tau ,\tau +T \right],{{\mathcal{P}}_{2}}\left( {\mathbb{L}^{2}}\left( {{\mathbb{R}}^{n}} \right) \right) \right)$, consider the stochastic equation
   \begin{equation}\label{m4}
  	\left\{ \begin{array}{l}
  	d{{u}_{\mu }}(t)-\Delta {{u}_{\mu }}(t)dt+\lambda {{u}_{\mu }}(t)dt+\alpha {{v}_{\mu }}(t)dt+{{f}_{\mu }}\left( t,x,{{u}_{\mu }}(t) \right)dt={{G}_{1}}_{\mu }\left( t,x,{{u}_{\mu }}(t) \right)dt  \\
  	+{{\sigma }_{\mu }}\left( t,{{u}_{\mu }}(t) \right)dW\left( t \right),\quad t>\tau ,  \\
  	d{{v}_{\mu }}(t)+\gamma {{v}_{\mu }}(t)dt-\beta {{u}_{\mu }}\left( t \right)dt={{G}_{2}}\left( t,x \right)dt+{{\delta }_{\mu }}\left( t,{{v}_{\mu }}(t) \right)dW\left( t \right),\quad t>\tau,	\end{array} \right.
  \end{equation}
with initial condition
\begin{align}\label{m5}
{{u}_{\mu }}\left( \tau  \right)={{\xi }_{1}},\ {{v}_{\mu }}\left( \tau  \right)={{\xi }_{2}},
\end{align}
 where ${{f}_{\mu }}\left( t,x,{{u}_{\mu }}(t) \right)=f\left( t,x,{{u}_{\mu }}(t),\mu_1 \left( t \right) \right)$, ${{G}_{1}}_{\mu }\left( t,x,{{u}_{\mu }}(t) \right)={{G}_{1}}\left( t,x,{{u}_{\mu }}(t),\mu_1 \left( t \right) \right)$, ${{\sigma }_{\mu }}\left( t,{{u}_{\mu }}(t) \right)=\sigma \left( t,{{u}_{\mu }}(t),\mu_1 \left( t \right) \right)$ and ${{\delta }_{\mu }}\left( t,{{v}_{\mu }}(t) \right)=\delta \left( t,{{v}_{\mu }}(t) \right)$.
One can verify that if $\mathbf{(H_{1})-({H_{4}})}$ hold, then ${{f}_{\mu }},{{G}_{1}}_{\mu },{{\sigma }_{\mu }}$ and ${{\delta }_{\mu }}$ satisfy all conditions in [\cite{WBD}, Theorem 6.3], further we can get that system \eqref{m4}-\eqref{m5} has a unique solution
${{k}_{\mu }}=\left( {{u}_{\mu }},{{v}_{\mu }} \right)$ satisfy
\[
{{k}_{\mu }}\in {{L}^{2}}\left( \Omega ,C[\tau ,\tau +T],{{\mathbb{L}}^{2}}\left( {{\mathbb{R}}^{n}} \right) \right)\quad \text{with}
\]
\[
{{u}_{\mu }}\in {{L}^{2}}\left( \Omega ,{{L}^{2}}\left( \tau ,\tau +T;{{H}^{1}}\left( {{\mathbb{R}}^{n}} \right) \right) \right)\cap {{L}^{p}}\left( \Omega ,{{L}^{p}}\left( \tau ,\tau +T;{{L}^{p}}\left( {{\mathbb{R}}^{n}} \right) \right) \right).
\]	
By ${{k}_{\mu }}\in {{L}^{2}}\left( \Omega ,C[\tau ,\tau +T],{{\mathbb{L}}^{2}}\left( {{\mathbb{R}}^{n}} \right) \right)$ and the Lebesgue dominated convergence theorem, we see that ${{k}_{\mu }}\in C\left( [\tau ,\tau +T],{{L}^{2}}\left( \Omega ,{{\mathbb L}^{2}}\left( {{\mathbb{R}}^{n}} \right) \right) \right)$. Since
\[
{{\mathbb{W}}_{2}}\left( {{\mathcal{L}}_{{{k}_{\mu }}\left( t \right)}},{{\mathcal{L}}_{{{k}_{\mu }}\left( s \right)}} \right)\le {{\left( \mathbb{E}\left[ {{\left\| {{k}_{\mu }}\left( t \right)-{{k}_{\mu }}\left( s \right) \right\|}^{2}} \right] \right)}^{\frac{1}{2}}},
\quad \forall s,t\in \left[ \tau ,\tau +T \right],
\]
and hence ${{\mathcal{L}}_{{{k}_{\mu }}\left( \cdot  \right)}}\in C\left( [\tau ,\tau +T],{{\mathcal{P}}_{2}}\left( {\mathbb{L}^{2}}\left( {{\mathbb{R}}^{n}} \right) \right) \right)$. Define a map ${{\Phi }^{{{\xi }_{0}}}}:C\left( [\tau ,\tau +T],{{\mathcal{P}}_{2}}\left( {\mathbb{L}^{2}}\left( {{\mathbb{R}}^{n}} \right) \right) \right)\to C\left( [\tau ,\tau +T],{{\mathcal{P}}_{2}}\left( {\mathbb{L}^{2}}\left( {{\mathbb{R}}^{n}} \right) \right) \right)$ by
\[
{{\Phi }^{{{\xi }_{0}}}}\left( \mu  \right)\left( t \right)={{\mathcal{L}}_{{{k}_{\mu }}\left( t \right)}},\quad t\in [\tau ,\tau +T],\,\mu \in C\left( [\tau ,\tau +T],{{\mathcal{P}}_{2}}\left( {{L}^{2}}\left( {{\mathbb{R}}^{n}} \right) \right) \right),
\]
where $k_{\mu}$ is the solution of \eqref{m4}-\eqref{m5}.
Next, we will prove the existence of solutions of
\eqref{c18}-\eqref{c19} by finding a fixed point of ${{\Phi }^{{{\xi }_{0}}}}$. To that end, we need to show ${{\Phi }^{{{\xi }_{0}}}}$ is a contractive map in the complete metric space $\left( C\left( [\tau ,\tau +T],{{\mathcal{P}}_{2}}\left( {\mathbb{L}^{2}}\left( {{\mathbb{R}}^{n}} \right) \right) \right),{{d}_{C\left( [\tau ,\tau +T],{{\mathcal{P}}_{2}}\left( {\mathbb{L}^{2}}\left( {{\mathbb{R}}^{n}} \right) \right) \right)}} \right)$ where the metric ${{d}_{C\left( [\tau ,\tau +T],{{\mathcal{P}}_{2}}\left( {\mathbb{L}^{2}}\left( {{\mathbb{R}}^{n}} \right) \right) \right)}}$ is defined by
\[
{{d}_{C\left( [\tau ,\tau +T],{{\mathcal{P}}_{2}}\left( {\mathbb{L}^{2}}\left( {{\mathbb{R}}^{n}} \right) \right) \right)}}\left( \mu ,\nu  \right)=\underset{t\in [\tau ,\tau +T]}{\mathop{\sup }}\,{{e}^{-\lambda t}}{{\mathbb{W}}_{2}}\left( \mu \left( t \right),\nu \left( t \right) \right),\quad\forall \mu ,\nu \in C\left( [\tau ,\tau +T],{{\mathcal{P}}_{2}}\left( {\mathbb{L}^{2}}\left( {{\mathbb{R}}^{n}} \right) \right) \right).
\]
Let $\mu ,\nu \in C\left( [\tau ,\tau +T],{{\mathcal{P}}_{2}}\left( {{L}^{2}}\left( {{\mathbb{R}}^{n}} \right) \right) \right)$, and $k_{\mu}$, $k_{\nu}$ be the solution of \eqref{m4}-\eqref{m5}. By Ito's formula we can get
\begin{align}
	\begin{split}\label{m6}
& \beta {{\left\| {{u}_{\mu }}\left( t \right)-{{u}_{\nu }}\left( t \right) \right\|}^{2}}+\alpha {{\left\| {{v}_{\mu }}\left( t \right)-{{v}_{\nu }}\left( t \right) \right\|}^{2}}+2\beta \int_{\tau }^{t}{{{\left\| \nabla \left( {{u}_{\mu }}\left( s \right)-{{u}_{\nu }}\left( s \right) \right) \right\|}^{2}}}ds \\
& +2\beta \lambda \int_{\tau }^{t}{{{\left\| {{u}_{\mu }}\left( s \right)-{{u}_{\nu }}\left( s \right) \right\|}^{2}}}ds+2\alpha \gamma \int_{\tau }^{t}{{{\left\| {{v}_{\mu }}\left( s \right)-{{v}_{\nu }}\left( s \right) \right\|}^{2}}}ds \\
& +2\beta \int_{\tau }^{t}{\int_{{{\mathbb{R}}^{n}}}{\left( f\left( s,x,{{u}_{\mu }}\left( s \right),\mu_1 \left( s \right) \right)-f\left( s,x,{{u}_{\nu }}\left( s \right),\nu_1 \left( s \right) \right) \right)\left( {{u}_{\mu }}\left( s \right)-{{u}_{\nu }}\left( s \right) \right)dx}}ds \\
 =&2\alpha \int_{\tau }^{t}{\left( {{G}_{2}}\left( s \right),{{v}_{\mu }}\left( s \right)-{{v}_{\nu }}\left( s \right) \right)}ds+\alpha \int_{\tau }^{t}{\left\| \delta \left( s,{{v}_{\mu }}\left( s \right) \right)-\delta \left( s,{{v}_{\nu }}\left( s \right) \right) \right\|_{{{L}_{2}}\left( {{l}^{2}},{{L}^{2}}\left( {{\mathbb{R}}^{n}} \right) \right)}^{2}}ds \\
& +2\beta \int_{\tau }^{t}{\int_{{{\mathbb{R}}^{n}}}{\left( {{G}_{1}}\left( s,x,{{u}_{\mu }}\left( s \right),\mu_1 \left( s \right) \right)-{{G}_{1}}\left( s,x,{{u}_{\nu }}\left( s \right),\nu_1 \left( s \right) \right) \right)\left( {{u}_{\mu }}\left( s \right)-{{u}_{\nu }}\left( s \right) \right)dx}}ds \\
& +\beta \int_{\tau }^{t}{\left\| \sigma \left( s,{{u}_{\mu }}\left( s \right),\mu_1 \left( s \right) \right)-\sigma \left( s,{{u}_{\nu }}\left( s \right),\nu_1 \left( s \right) \right) \right\|_{{{L}_{2}}\left( {{l}^{2}},{{L}^{2}}\left( {{\mathbb{R}}^{n}} \right) \right)}^{2}}ds \\
& +2\beta \int_{\tau }^{t}{\left( \sigma \left( s,{{u}_{\mu }}\left( s \right),\mu_1 \left( s \right) \right)-\sigma \left( s,{{u}_{\nu }}\left( s \right),\nu_1 \left( s \right) \right),{{u}_{\mu }}\left( s \right)-{{u}_{\nu }}\left( s \right) \right)}dW\left( s \right) \\
& +2\alpha \int_{\tau }^{t}{\left( \delta \left( s,{{v}_{\mu }}\left( s \right) \right)-\delta \left( s,{{v}_{\nu }}\left( s \right) \right),{{v}_{\mu }}\left( s \right)-{{v}_{\nu }}\left( s \right) \right)}dW\left( s \right).
\end{split}
\end{align}
Taking the expectation of both side of \eqref{m6}, we can get
\begin{align}
	\begin{split}\label{m7}
& \mathbb{E}\left( \beta {{\left\| {{u}_{\mu }}\left( t \right)-{{u}_{\nu }}\left( t \right) \right\|}^{2}}+\alpha {{\left\| {{v}_{\mu }}\left( t \right)-{{v}_{\nu }}\left( t \right) \right\|}^{2}} \right)+2\beta \mathbb{E}\left( \int_{\tau }^{t}{{{\left\| \nabla \left( {{u}_{\mu }}\left( s \right)-{{u}_{\nu }}\left( s \right) \right) \right\|}^{2}}}ds \right) \\
& +2\lambda \mathbb{E}\left( \int_{\tau }^{t}{\left( \beta {{\left\| {{u}_{\mu }}\left( s \right)-{{u}_{\nu }}\left( s \right) \right\|}^{2}}+\alpha {{\left\| {{v}_{\mu }}\left( s \right)-{{v}_{\nu }}\left( s \right) \right\|}^{2}} \right)}ds \right) \\
 \le&-2\beta \mathbb{E}\left( \int_{\tau }^{t}{\int_{{{\mathbb{R}}^{n}}}{\left( f\left( s,x,{{u}_{\mu }}\left( s \right),\mu_1 \left( s \right) \right)-f\left( s,x,{{u}_{\nu }}\left( s \right),\nu_1 \left( s \right) \right) \right)\left( {{u}_{\mu }}\left( s \right)-{{u}_{\nu }}\left( s \right) \right)dx}}ds \right) \\
& +2\beta \mathbb{E}\left( \int_{\tau }^{t}{\int_{{{\mathbb{R}}^{n}}}{\left( {{G}_{1}}\left( s,x,{{u}_{\mu }}\left( s \right),\mu_1 \left( s \right) \right)-{{G}_{1}}\left( s,x,{{u}_{\nu }}\left( s \right),\nu_1 \left( s \right) \right) \right)\left( {{u}_{\mu }}\left( s \right)-{{u}_{\nu }}\left( s \right) \right)dx}}ds \right) \\
& +\beta \mathbb{E}\left( \int_{\tau }^{t}{\left\| \sigma \left( s,{{u}_{\mu }}\left( s \right),\mu_1 \left( s \right) \right)-\sigma \left( s,{{u}_{\nu }}\left( s \right),\nu_1 \left( s \right) \right) \right\|_{{{L}_{2}}\left( {{l}^{2}},{{L}^{2}}\left( {{\mathbb{R}}^{n}} \right) \right)}^{2}}ds \right) \\
& +2\alpha \mathbb{E}\left( \int_{\tau }^{t}{\left( {{G}_{2}}\left( s \right),{{v}_{\mu }}\left( s \right)-{{v}_{\nu }}\left( s \right) \right)}ds \right)+\alpha \mathbb{E}\left( \int_{\tau }^{t}{\left\| \delta \left( s,{{v}_{\mu }}\left( s \right) \right)-\delta \left( s,{{v}_{\nu }}\left( s \right) \right) \right\|_{{{L}_{2}}\left( {{l}^{2}},{{L}^{2}}\left( {{\mathbb{R}}^{n}} \right) \right)}^{2}}ds \right).
\end{split}
\end{align}
For the first term on the right side of \eqref{m7}, by \eqref{c3}, \eqref{c4} and H\"{o}lder inequality, we can get
\begin{align}
	\begin{split}\label{m8}
& -2\beta \mathbb{E}\left( \int_{\tau }^{t}{\int_{{{\mathbb{R}}^{n}}}{\left( f\left( s,x,{{u}_{\mu }}\left( s \right),\mu_1 \left( s \right) \right)-f\left( s,x,{{u}_{\nu }}\left( s \right),\nu_1 \left( s \right) \right) \right)\left( {{u}_{\mu }}\left( s \right)-{{u}_{\nu }}\left( s \right) \right)dx}}ds \right) \\
 =&-2\beta \mathbb{E}\left( \int_{\tau }^{t}{\int_{{{\mathbb{R}}^{n}}}{\left( f\left( s,x,{{u}_{\mu }}\left( s \right),\mu_1 \left( s \right) \right)-f\left( s,x,{{u}_{\nu }}\left( s \right),\mu_1 \left( s \right) \right) \right)\left( {{u}_{\mu }}\left( s \right)-{{u}_{\nu }}\left( s \right) \right)dx}}ds \right) \\
& -2\beta \mathbb{E}\left( \int_{\tau }^{t}{\int_{{{\mathbb{R}}^{n}}}{\left( f\left( s,x,{{u}_{\nu }}\left( s \right),\mu_1 \left( s \right) \right)-f\left( s,x,{{u}_{\nu }}\left( s \right),\nu_1 \left( s \right) \right) \right)\left( {{u}_{\mu }}\left( s \right)-{{u}_{\nu }}\left( s \right) \right)dx}}ds \right) \\
 \le& 2\beta{{\left\| {{\phi }_{4}} \right\|}_{{{L}^{\infty }}\left( \mathbb{R},{{L}^{\infty }}\left( {{\mathbb{R}}^{n}} \right) \right)}}\int_{\tau }^{t}{\mathbb{E}\left[ {{\left\| {{u}_{\mu }}\left( s \right)-{{u}_{\nu }}\left( s \right) \right\|}^{2}} \right]}ds \\
& +\beta\int_{\tau }^{t}{{{\left\| {{\phi }_{2}}\left( s \right) \right\|}^{2}}\mathbb{E}\left[ {{\left\| {{u}_{\mu }}\left( s \right)-{{u}_{\nu }}\left( s \right) \right\|}^{2}} \right]}ds+\beta\int_{\tau }^{t}{{{\left( {{\mathbb{W}}_{2}}\left( \mu \left( s \right),\nu \left( s \right) \right) \right)}^{2}}}ds.
\end{split}
\end{align}
For the second term on the right side of \eqref{m7}, by \eqref{c8} we can get
\begin{align}
	\begin{split}\label{m9}
& 2\beta \mathbb{E}\left( \int_{\tau }^{t}{\int_{{{\mathbb{R}}^{n}}}{\left( {{G}_{1}}\left( s,x,{{u}_{\mu }}\left( s \right),\mu_1 \left( s \right) \right)-{{G}_{1}}\left( s,x,{{u}_{\nu }}\left( s \right),\nu_1 \left( s \right) \right) \right)\left( {{u}_{\mu }}\left( s \right)-{{u}_{\nu }}\left( s \right) \right)dx}}ds \right) \\
 \le& 2\beta \mathbb{E}\left( \int_{\tau }^{t}{\int_{{{\mathbb{R}}^{n}}}{{{\phi }_{7}}\left( s,x \right)\left( \left( {{u}_{\mu }}\left( s \right)-{{u}_{\nu }}\left( s \right) \right)+{{\mathbb{W}}_{2}}\left( \mu \left( s \right),\nu \left( s \right) \right) \right)\left| {{u}_{\mu }}\left( s \right)-{{u}_{\nu }}\left( s \right) \right|dx}}ds \right) \\
 \le& 2\beta{{\left\| {{\phi }_{7}} \right\|}_{{{L}^{\infty }}\left( \mathbb{R},{{L}^{\infty }}\left( {{\mathbb{R}}^{n}} \right) \right)}}\int_{\tau }^{t}{\mathbb{E}\left[ {{\left\| {{u}_{\mu }}\left( s \right)-{{u}_{\nu }}\left( s \right) \right\|}^{2}} \right]}ds \\
& +\beta\int_{\tau }^{t}{{{\left\| {{\phi }_{7}}\left( s \right) \right\|}^{2}}\mathbb{E}\left[ {{\left\| {{u}_{\mu }}\left( s \right)-{{u}_{\nu }}\left( s \right) \right\|}^{2}} \right]}ds+\beta\int_{\tau }^{t}{{{\left( {{\mathbb{W}}_{2}}\left( \mu \left( s \right),\nu \left( s \right) \right) \right)}^{2}}}ds.	
	\end{split}
\end{align}
For the last three terms on the right side of \eqref{m7}, by \eqref{c00} and \eqref{c01} we can get
\begin{align}
\begin{split}\label{m10}
& \beta \mathbb{E}\left( \int_{\tau }^{t}{\left\| \sigma \left( s,{{u}_{\mu }}\left( s \right),\mu_1 \left( s \right) \right)-\sigma \left( s,{{u}_{\nu }}\left( s \right),\nu_1 \left( s \right) \right) \right\|_{{{L}_{2}}\left( {{l}^{2}},{{L}^{2}}\left( {{\mathbb{R}}^{n}} \right) \right)}^{2}}ds \right) \\
& +2\alpha \mathbb{E}\left( \int_{\tau }^{t}{\left( {{G}_{2}}\left( s \right),{{v}_{\mu }}\left( s \right)-{{v}_{\nu }}\left( s \right) \right)}ds \right)+\alpha \mathbb{E}\left( \int_{\tau }^{t}{\left\| \delta \left( s,{{v}_{\mu }}\left( s \right) \right)-\delta \left( s,{{v}_{\nu }}\left( s \right) \right) \right\|_{{{L}_{2}}\left( {{l}^{2}},{{L}^{2}}\left( {{\mathbb{R}}^{n}} \right) \right)}^{2}}ds \right) \\
 \le& \left( {{L}_{\sigma }}+2\left\| {{G}_{2}} \right\|_{{{L}^{\infty }}\left( \mathbb{R},{{L}^{\infty}}\left( {{\mathbb{R}}^{n}} \right) \right)} \right)\int_{\tau }^{t}{\mathbb{E}\left( \beta {{\left\| {{u}_{\mu }}\left( s \right)-{{u}_{\nu }}\left( s \right) \right\|}^{2}}+\alpha {{\left\| {{v}_{\mu }}\left( s \right)-{{v}_{\nu }}\left( s \right) \right\|}^{2}} \right)}ds \\
 & +\beta {{L}_{\sigma }}\int_{\tau }^{t}{{{\left( {{\mathbb{W}}_{2}}\left( \mu \left( s \right),\nu \left( s \right) \right) \right)}^{2}}}ds.		
\end{split}
                                                        \end{align}
By \eqref{m7}-\eqref{m10} we can get
\begin{align}
	\begin{split}\label{m11}
& \mathbb{E}\left( \beta {{\left\| {{u}_{\mu }}\left( t \right)-{{u}_{\nu }}\left( t \right) \right\|}^{2}}+\alpha {{\left\| {{v}_{\mu }}\left( t \right)-{{v}_{\nu }}\left( t \right) \right\|}^{2}} \right)+2\beta \mathbb{E}\left( \int_{\tau }^{t}{{{\left\| \nabla \left( {{u}_{\mu }}\left( s \right)-{{u}_{\nu }}\left( s \right) \right) \right\|}^{2}}}ds \right) \\
 \le& \beta \int_{\tau }^{t}{\left( {{\left\| {{\phi }_{2}}\left( s \right) \right\|}^{2}}+{{\left\| {{\phi }_{7}}\left( s \right) \right\|}^{2}} \right)\mathbb{E}\left[ {{\left\| {{u}_{\mu }}\left( s \right)-{{u}_{\nu }}\left( s \right) \right\|}^{2}} \right]}ds+\left( 2+{{L}_{\sigma }} \right)\beta \int_{\tau }^{t}{{{\left( {{\mathbb{W}}_{2}}\left( \mu \left( s \right),\nu \left( s \right) \right) \right)}^{2}}}ds \\
& +\left( -2\lambda +{{L}_{\sigma }}+2{{\left\| {{G}_{2}} \right\|}_{{{L}^{\infty }}\left( \mathbb{R},{{L}^{\infty }}\left( {{\mathbb{R}}^{n}} \right) \right)}} \right)\int_{\tau }^{t}{\mathbb{E}\left( \beta {{\left\| {{u}_{\mu }}\left( s \right)-{{u}_{\nu }}\left( s \right) \right\|}^{2}}+\alpha {{\left\| {{v}_{\mu }}\left( s \right)-{{v}_{\nu }}\left( s \right) \right\|}^{2}} \right)}ds \\
& +\left( 2{{\left\| {{\phi }_{7}} \right\|}_{{{L}^{\infty }}\left( \mathbb{R},{{L}^{\infty }}\left( {{\mathbb{R}}^{n}} \right) \right)}}+2{{\left\| {{\phi }_{4}} \right\|}_{{{L}^{\infty }}\left( \mathbb{R},{{L}^{\infty }}\left( {{\mathbb{R}}^{n}} \right) \right)}} \right)\int_{\tau }^{t}{\mathbb{E}\left( \beta {{\left\| {{u}_{\mu }}\left( s \right)-{{u}_{\nu }}\left( s \right) \right\|}^{2}}+\alpha {{\left\| {{v}_{\mu }}\left( s \right)-{{v}_{\nu }}\left( s \right) \right\|}^{2}} \right)}ds.	
	\end{split}
\end{align}
By \eqref{m11} and Gronwall inequality  we can get for all $t\in[\tau,\tau+T]$,
\begin{align}
	\begin{split}\label{m0}
\mathbb{E}\left( \beta {{\left\| {{u}_{\mu }}\left( t \right)-{{u}_{\nu }}\left( t \right) \right\|}^{2}}+\alpha {{\left\| {{v}_{\mu }}\left( t \right)-{{v}_{\nu }}\left( t \right) \right\|}^{2}} \right)\le {{c}_{T}}\int_{\tau }^{t}{{{\left( {{\mathbb{W}}_{2}}\left( \mu \left( s \right),\nu \left( s \right) \right) \right)}^{2}}}ds,
	\end{split}
\end{align}
where ${{c}_{T}}>0$ is a positive constant. By \eqref{m0} we can get
\begin{align}
	\begin{split}\label{m12}
& {{e}^{-2\eta t}}\mathbb{E}\left( \beta {{\left\| {{u}_{\mu }}\left( t \right)-{{u}_{\nu }}\left( t \right) \right\|}^{2}}+\alpha {{\left\| {{v}_{\mu }}\left( t \right)-{{v}_{\nu }}\left( t \right) \right\|}^{2}} \right) \\
 \le &{{c}_{T}}\int_{\tau }^{t}{{{e}^{-2\eta \left( t-s \right)}}{{e}^{-2\eta s}}}{{\left( {{\mathbb{W}}_{2}}\left( \mu \left( s \right),\nu \left( s \right) \right) \right)}^{2}}ds\le \frac{{{c}_{T}}}{2\eta }\underset{s\in \left[ \tau ,t \right]}{\mathop{\sup }}\,{{e}^{-2\eta s}}{{\left( {{\mathbb{W}}_{2}}\left( \mu \left( s \right),\nu \left( s \right) \right) \right)}^{2}}.
	\end{split}
\end{align}
By \eqref{m12} and the definition of ${{\mathbb{W}}_{2}}\left( \cdot ,\cdot  \right)$,
we can get
\begin{align}
	\begin{split}\label{m13}
{{e}^{-2\eta t}}{{\left( {{\mathbb{W}}_{2}}\left( {{\mathcal{L}}_{{{u}_{\mu }}\left( t \right)}},{{\mathcal{L}}_{{{u}_{\nu }}\left( t \right)}} \right) \right)}^{2}}\le \frac{{{c}_{T}}}{2\eta }\underset{s\in \left[ \tau ,t \right]}{\mathop{\sup }}\,{{e}^{-2\eta s}}{{\left( {{\mathbb{W}}_{2}}\left( \mu \left( s \right),\nu \left( s \right) \right) \right)}^{2}},\quad \forall t\in \left[ \tau ,\tau +T \right].
	\end{split}
\end{align}
By \eqref{m13} and the definition of ${{d}_{C\left([\tau ,\tau +T],{{\mathcal{P}}_{2}}\left( {\mathbb{L}^{2}}\left( {{\mathbb{R}}^{n}} \right) \right) \right)}}$,
we can get
\begin{align}
	\begin{split}\label{m14}
		{{d}_{C\left( [\tau ,\tau +T],{{\mathcal{P}}_{2}}\left( {\mathbb{L}^{2}}\left( {{\mathbb{R}}^{n}} \right) \right) \right)}}\left( {{\mathcal{L}}_{{{u}_{\mu }}}},{{\mathcal{L}}_{{{u}_{\nu }}}} \right)\le {{\left( \frac{{{c}_{T}}}{2\eta } \right)}^{\frac{1}{2}}}{{d}_{C\left( [\tau ,\tau +T],{{\mathcal{P}}_{2}}\left( {\mathbb{L}^{2}}\left( {{\mathbb{R}}^{n}} \right) \right) \right)}}\left( \mu ,\nu  \right).
\end{split}
\end{align}
Here we assume that $\eta$ is a enough large positive constant such that $\frac{{{c}_{T}}}{2\eta }=\frac{1}{4}$, by \eqref{m14} and the definition of ${{\Phi }^{{{\xi }_{0}}}}$, we can get that for all  $\mu ,\nu \in C\left( [\tau ,\tau +T],{{\mathcal{P}}_{2}}\left( {{\mathbb L}^{2}}\left( {{\mathbb{R}}^{n}} \right) \right) \right)$,
\begin{align}
	\begin{split}\label{m15}
{{d}_{C\left( [\tau ,\tau +T],{{\mathcal{P}}_{2}}\left( {\mathbb{L}^{2}}\left( {{\mathbb{R}}^{n}} \right) \right) \right)}}\left( {{\Phi }^{{{\xi }_{0}}}}\left( \mu  \right),{{\Phi }^{{{\xi }_{0}}}}\left( \nu  \right) \right)\le \frac{1}{2}{{d}_{C\left( [\tau ,\tau +T],{{\mathcal{P}}_{2}}\left( {\mathbb{L}^{2}}\left( {{\mathbb{R}}^{n}} \right) \right) \right)}}\left( \mu ,\nu  \right),
	\end{split}
\end{align}
which shows ${{\Phi }^{{{\xi }_{0}}}}$ is a contractive map in $C\left( [\tau ,\tau +T],{{\mathcal{P}}_{2}}\left( {\mathbb{L}^{2}}\left( {{\mathbb{R}}^{n}} \right) \right) \right)$ with metric ${{d}_{C\left( [\tau ,\tau +T],{{\mathcal{P}}_{2}}\left( {\mathbb{L}^{2}}\left( {{\mathbb{R}}^{n}} \right) \right) \right)}}$. Therefore, ${{\Phi }^{{{\xi }_{0}}}}$ has a unique fixed point $\bar{\mu }\in C\left([\tau ,\tau +T],{{\mathcal{P}}_{2}}\left( {\mathbb{L}^{2}}\left( {{\mathbb{R}}^{n}} \right) \right) \right)$. Then ${{k}_{{\bar{\mu }}}}$ is a solution of \eqref{c18}-\eqref{c19}. On the other hand, for every solution $k$ of \eqref{c18}-\eqref{c19}, if we set $\mu \left( t \right)={{\mathcal{L}}_{k\left( t \right)}}$ for $t\in[\tau,\tau+T]$, then $\mu$ is a fixed point of ${{\Phi }^{{{\xi }_{0}}}}$ in $C\left( [\tau ,\tau +T],{{\mathcal{P}}_{2}}\left( {\mathbb{L}^{2}}\left( {{\mathbb{R}}^{n}} \right) \right) \right)$, which along with the uniqueness of fixed points of ${{\Phi }^{{{\xi }_{0}}}}$ and \eqref{m0} implies the uniqueness of solutions of
\eqref{c18}-\eqref{c19}.

Since $T>0$ was taken arbitrarily and the uniqueness is ensured, this implies that \eqref{c18}-\eqref{c19} has a unique solution up to every time $T>0$. Hence \eqref{c18}-\eqref{c19} has solution in $\mathbb{R}$.

\end{proof}

\section{Uniform Estimates}
In this section, we give the uniform estimates of the solutions of \eqref{c18}-\eqref{c19} which is devoted  to the existence and uniqueness of $\mathcal{D}$-pullback measure attractors.
\begin{lem}\label{lem4.1}
Suppose assumption  $\mathbf{(H_{1})-({H_{4}})}$, \eqref{c21} and \eqref{c24} hold, then for every $\tau\in \mathbb{R}$ and ${{D}_{1}}=\left\{ {{D}_{1}}\left( t \right):t\in \mathbb{R} \right\}\in {{\mathcal{D}}_{0}}$, there exists $T=T(\tau,D_{1})>0$ such that for all $t\geq T$, the solution  $k=(u,v)$ of  \eqref{c18}-\eqref{c19} satisfies
\begin{align}
	\begin{split}\label{d1}
& \mathbb{E}\left( {{\left\| \left( {{u}}\left( \tau ,\tau -t,\xi _{0} \right),{{v}}\left( \tau ,\tau -t,\xi _{0} \right) \right) \right\|_{{{\mathbb{L}}^{2}}\left( {{\mathbb{R}}^{n}} \right)}^{2}}} \right) \\
 \le& {{M}_{1}}+{{M}_{1}}\int_{-\infty }^{\tau }{{{e}^{\eta \left( s-\tau  \right)}}\left( {{\left\| {{\phi }_{g}}\left( s \right) \right\|}^{2}}+\left\| {{\theta }_{1}}\left( s \right) \right\|_{{{L}^{2}}\left( {{\mathbb{R}}^{n}},{{l}^{2}} \right)}^{2}+\left\| {{\theta }_{2}}\left( s \right) \right\|_{{{L}^{2}}\left( {{\mathbb{R}}^{n}},{{l}^{2}} \right)}^{2} \right)}ds,
	\end{split}
\end{align}
and
\begin{align}\label{d2}
	\begin{split}
	  & \int_{\tau -t}^{\tau }{{{e}^{\eta \left( s-\tau  \right)}}\mathbb{E}\left( \left\| {{u}}\left( \tau ,\tau -s,\xi_0  \right) \right\|_{{{H}^{1}}\left( {{\mathbb{R}}^{n}} \right)}^{2}+\left\| {{v}}\left( \tau ,\tau -s,\xi_0 \right) \right\|_{{{L}^{2}}\left( {{\mathbb{R}}^{n}} \right)}^{2} \right)}ds \\
	  \le& {{M}_{1}}+{{M}_{1}}\int_{-\infty }^{\tau }{{{e}^{\eta \left( s-\tau \right)}}\left( \left\| {{\phi }_{g}}\left( s \right) \right\|_{{{L}^{2}}\left( {{\mathbb{R}}^{n}} \right)}^{2}+\left\| {{\theta }_{1}}\left( s \right) \right\|_{{{L}^{2}}\left( {{\mathbb{R}}^{n}},{{l}^{2}} \right)}^{2}+\left\| {{\theta }_{2}}\left( s \right) \right\|_{{{L}^{2}}\left( {{\mathbb{R}}^{n}},{{l}^{2}} \right)}^{2} \right)}ds,
     \end{split}
     \end{align}
where $\xi _{0}\in L_{{{\mathcal{F}}_{\tau -t}}}^{2}\left( \Omega ,\mathbb L^2(\R^n) \right)$ with ${{\mathcal{L}}_{{{\xi }_{0}}}}\in {{D}_{1}}\left( \tau -t \right)$,  $\eta>0$ is the same number as in  \eqref{c21} and $M_{1}$ is a positive constant independent of $\tau$  and $D_{1}$.
\end{lem}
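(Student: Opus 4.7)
The plan is to establish a differential inequality for the weighted expectation $e^{\eta s}\mathbb{E}(\beta\|u(s)\|^2+\alpha\|v(s)\|^2)$ by combining the energy identity \eqref{m3} with the Ito formula applied to the factor $e^{\eta s}$, integrating over $[\tau-t,\tau]$, and invoking the smallness condition \eqref{c21} to absorb every coupling and noise coefficient into a net dissipation of rate $\eta$.

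First, I apply Ito's formula to $e^{\eta s}\bigl(\beta\|u(s)\|^2+\alpha\|v(s)\|^2\bigr)$, which adjoins the term $\eta e^{\eta s}\bigl(\beta\|u\|^2+\alpha\|v\|^2\bigr)$ to the differential form of \eqref{m3}. Taking expectation kills the stochastic integrals against $W$ by standard localization together with the $L^2$-regularity granted by Theorem \ref{lemm1}. Each right-hand side term is then estimated via the structural hypotheses: the $f$-contribution is controlled from below by \eqref{c2}, where the key identity $\mathcal{L}_{u(s)}(\|\cdot\|^2)=\mathbb{E}\|u(s)\|^2$ converts the mean-field piece into $\|\psi_1\|_{L^1(\mathbb{R}^n)}\mathbb{E}\|u(s)\|^2$ after integrating in $x$ and applying Fubini; the $G_1$-term is handled via \eqref{c7} and Young's inequality, yielding a forcing $\|\phi_g\|^2$ and a coefficient on $\mathbb{E}\|u\|^2$ controlled by $\|\phi_7\|_{L^\infty(\mathbb{R},L^1\cap L^\infty)}$ and $\|\psi_g\|_{L^1\cap L^\infty}$; the $G_2$-term is bounded by H\"older against the hypothesis $G_2\in L^\infty(\mathbb{R},H^1\cap L^4)$; and the noise terms are bounded via \eqref{c16}-\eqref{c17}, producing forcings $\|\theta_1\|^2_{L^2(\mathbb{R}^n,l^2)}$, $\|\theta_2\|^2_{L^2(\mathbb{R}^n,l^2)}$ together with the feedback coefficients $24\|w\|^2\|\beta_1\|_{l^2}^2$, $12\|w\|^2_{L^\infty(\mathbb{R}^n)}\|\gamma_1\|_{l^2}^2$ on $\mathbb{E}\|u\|^2$ and $6\|\delta\|_{l^2}^2$ on $\mathbb{E}\|v\|^2$.

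Collecting coefficients, the total factor in front of $\mathbb{E}(\beta\|u\|^2+\alpha\|v\|^2)$ is precisely the right-hand side of \eqref{c21}, so that condition furnishes a dissipation margin of at least $\eta$ and gives a differential inequality of the shape
\begin{align*}
\tfrac{d}{ds}\!\left[e^{\eta s}\mathbb{E}(\beta\|u(s)\|^2+\alpha\|v(s)\|^2)\right]+\beta e^{\eta s}\mathbb{E}\|\nabla u(s)\|^2 \le C e^{\eta s}\!\left(\|\phi_g(s)\|^2+\|\theta_1(s)\|^2_{L^2(\mathbb{R}^n,l^2)}+\|\theta_2(s)\|^2_{L^2(\mathbb{R}^n,l^2)}+1\right),
\end{align*}
where $C$ depends only on structural constants. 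Integrating on $[\tau-t,\tau]$ and dividing by $e^{\eta\tau}$ yields
\begin{align*}
\mathbb{E}(\beta\|u(\tau,\tau-t,\xi_0)\|^2+\alpha\|v(\tau,\tau-t,\xi_0)\|^2)\le e^{-\eta t}\mathbb{E}\|\xi_0\|_{\mathbb{L}^2(\mathbb{R}^n)}^2+C\!\int_{-\infty}^{\tau}\!e^{\eta(s-\tau)}\!\left(\|\phi_g\|^2+\|\theta_1\|^2+\|\theta_2\|^2\right)\!ds+C.
\end{align*}
Since $\mathcal{L}_{\xi_0}\in D_1(\tau-t)$ with $D_1\in\mathcal{D}_0$, condition \eqref{c22} forces $e^{-\eta t}\mathbb{E}\|\xi_0\|^2\to 0$ as $t\to\infty$, so a threshold $T=T(\tau,D_1)$ exists making this summand at most $1$ for $t\ge T$; together with \eqref{c24} this delivers \eqref{d1}. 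Estimate \eqref{d2} follows by retaining the dissipative contribution $\int e^{\eta s}\mathbb{E}(\|\nabla u\|^2+\|u\|^2+\|v\|^2)ds$ on the left-hand side before the final integration rather than discarding it.

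The main obstacle is the precise bookkeeping forced by the distribution-dependent coupling: every occurrence of $\mu(\|\cdot\|^2)$ in \eqref{c2}, \eqref{c7}, \eqref{c12}, and \eqref{c16} must be substituted by $\mathbb{E}\|u(s)\|^2$ after taking expectation and then transferred to the left-hand side with the correct constant, and the integer factors $24$, $12$, $2$, $6$ in \eqref{c21} are exactly those produced by Young's inequality with $\tfrac12$-splittings applied to the corresponding feedback terms. Any slack in matching these constants would destroy the dissipation margin $\eta$ and hence the pullback absorbing property needed in Section 6.
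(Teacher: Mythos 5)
Your proposal is correct and follows essentially the same route as the paper: apply Ito's formula to the weighted energy $\beta\|u\|^{2}+\alpha\|v\|^{2}$ (the paper derives the $\tfrac{d}{dt}\mathbb{E}$-inequality first and then multiplies by $e^{\eta t}$, which is the same computation), bound the $f$, $G_{1}$, $G_{2}$ and noise terms via \eqref{c2}, \eqref{c7}, \eqref{c16}--\eqref{c17} with $\mu(\|\cdot\|^{2})=\mathbb{E}\|u\|^{2}$, absorb the feedback coefficients using \eqref{c21}, integrate over $(\tau-t,\tau)$, and use \eqref{c22} and \eqref{c24} to choose $T(\tau,D_{1})$. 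The only minor imprecision is your claim that the coefficients produced here match the constants $24$, $12$, $6$ of \eqref{c21} exactly; in this second-moment estimate the factors are actually $8\|w\|^{2}\|\beta_{1}\|_{l^{2}}^{2}$, $4\|w\|_{L^{\infty}}^{2}\|\gamma_{1}\|_{l^{2}}^{2}$ and $2\|\delta\|_{l^{2}}^{2}$ (the larger constants are needed later for the fourth-moment bound), but \eqref{c21} is stronger than required, so the dissipation margin survives and the argument stands.
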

\begin{proof}
	By \eqref{m3} and Ito's formula  we can get that for all $t\geq\tau$,
\begin{align}\label{d5}
	\begin{split}
	  & \frac{d}{dt}\mathbb{E}\left( \beta  {{\left\| u\left( t \right) \right\|}^{2}}+\alpha {{\left\| v\left( t \right) \right\|}^{2}} \right)+2\beta  \mathbb{E}\left( {{\left\| \nabla u\left( t \right) \right\|}^{2}} \right)+2\beta \lambda  \mathbb{E}\left( {{\left\| u\left( t \right) \right\|}^{2}} \right)+2\alpha \gamma  \mathbb{E}\left( {{\left\| v\left( t \right) \right\|}^{2}} \right) \\
	   =&-2\beta  \mathbb{E}\left( \left( f\left( t,x,u\left( t \right),{{\mathcal{L}}_{u\left( t \right)}} \right),u\left( t \right) \right) \right)+2\alpha  \mathbb{E}\left( {{G}_{2}}\left( t \right),v\left( t \right) \right)+\alpha  \mathbb{E}\left( \left\| {\delta}\left( t,v\left( t \right) \right) \right\|_{{{L}_{2}}\left( {{l}^{2}},{{L}^{2}}\left( {{\mathbb{R}}^{n}} \right) \right)}^{2} \right) \\
	  & +2\beta  \mathbb{E}\left( {{G}_{1}}\left( t,x,u\left( t,x \right),{{\mathcal{L}}_{u\left( t \right)}} \right),u\left( t \right) \right)+\beta  \mathbb{E}\left( \left\| {{\sigma }}\left( t,u\left( t \right),{{\mathcal{L}}_{u\left( s \right)}} \right) \right\|_{{{L}_{2}}\left( {{l}^{2}},{{L}^{2}}\left( {{\mathbb{R}}^{n}} \right) \right)}^{2} \right).
\end{split}
\end{align}
For the first term on the right-hand side of \eqref{d5}, by \eqref{c2}  we can get
\begin{align}\label{d6}
	\begin{split}
& -2\beta \mathbb{E}\left( f\left( t,x,u\left( t,x \right),{{\mathcal{L}}_{u\left( t \right)}} \right),u\left( t \right) \right) \\
 \le& 2\beta \mathbb{E}\left( \int_{{{\mathbb{R}}^{n}}}{{{\phi }_{1}}\left( t,x \right)\left( 1+{{\left\vert u \right\vert}^{2}} \right)dx} \right)+2\beta \mathbb{E}\left( \int_{{{\mathbb{R}}^{n}}}{{{\psi }_{1}}\left( x \right)\mathbb{E}{{\left\| u\left( t \right) \right\|}^{2}}dx} \right) \\
& -2\beta {{\alpha }_{1}}\mathbb{E}\left( \int_{{{\mathbb{R}}^{n}}}{{{\left\vert {{u}}\left( t,x \right) \right\vert}^{p}}dx} \right) \\
 \le& 2\beta {{\left\| {{\phi }_{1}}\left( t \right) \right\|}_{_{{{L}^{1}}\left( {{\mathbb{R}}^{n}} \right)}}}-2\beta {{\alpha }_{1}}\mathbb{E}\left( \left\| {{u}}\left( t \right) \right\|_{{{L}^{p}}\left( {{\mathbb{R}}^{n}} \right)}^{p} \right)+2\beta \left( {{\left\| {{\phi }_{1}}\left( t \right) \right\|}_{_{{{L}^{\infty }}\left( {{\mathbb{R}}^{n}} \right)}}}+{{\left\| {{\psi }_{1}} \right\|}_{_{{{L}^{1}}\left( {{\mathbb{R}}^{n}} \right)}}} \right)\mathbb{E}\left( {{\left\| u\left( t \right) \right\|}^{2}} \right).
\end{split}
\end{align}
For the second and fourth term on the right-hand side of \eqref{d5}, by \eqref{c7}  we can get
\begin{align}\label{d7}
	\begin{split}
   & 2\beta \mathbb{E}\left( {{G}_{1}}\left( t,x,u\left( t,x \right),{{\mathcal{L}}_{u\left( s \right)}} \right),u\left( t \right) \right)+2\alpha  \mathbb{E}\left( {{G}_{2}}\left( t,x \right),v\left( t \right) \right) \\
   \le &2\beta \mathbb{E}\left( \int_{{{\mathbb{R}}^{n}}}{\left\vert u\left( t,x \right) \right\vert{{\phi }_{g}}\left( t,x \right)dx} \right)+\lambda \alpha \mathbb{E}\left( {{\left\| v\left( t \right) \right\|}^{2}} \right)+{{\lambda }^{-1}}\alpha \left\| {{G}_{2}}\left( t \right) \right\|_{{{L}^{2}}\left( {{\mathbb{R}}^{n}} \right)}^{2} \\
  & +2\beta \mathbb{E}\left( \int_{{{\mathbb{R}}^{n}}}{\left\vert {{u}}\left( t,x \right) \right\vert\left[ {{\phi }_{7}}\left( t,x \right)\left\vert {{u}}\left( t,x \right) \right\vert+{{\psi }_{g}}\left( x \right)\mu \sqrt{{{\left\| \cdot  \right\|}^{2}}} \right]dx} \right) \\
   \le& \beta\mathbb{E}\left( \int_{{{\mathbb{R}}^{n}}}{\lambda {{\left\vert {{u}}\left( t\right) \right\vert}^{2}}+{{\lambda }^{-1}}{{\left\vert {{\phi }_{g}}\left( t,x \right) \right\vert}^{2}}dx} \right)  +2\beta \mathbb{E}\left( \int_{{{\mathbb{R}}^{n}}}{\left\vert {{\psi }_{g}}\left( x \right) \right\vert\left[ {{\left\vert {{u}}\left( t \right) \right\vert}^{2}}+{{\left( \mu \sqrt{{{\left\| \cdot  \right\|}^{2}}} \right)}^{2}} \right]dx} \right) \\
  & +2\beta \mathbb{E}\left( \int_{{{\mathbb{R}}^{n}}}{{{\phi }_{7}}\left( t,x \right){{\left\vert u\left( t \right) \right\vert}^{2}}dx} \right)+\lambda \alpha \mathbb{E}\left( {{\left\| v\left( t \right) \right\|}^{2}} \right)+{{\lambda }^{-1}}\alpha \left\| {{G}_{2}}\left( t \right) \right\|_{{{L}^{2}}\left( {{\mathbb{R}}^{n}} \right)}^{2} \\
   \le& {{\lambda }^{-1}}\beta \mathbb{E}\left( \left\| {{\phi }_{g}}\left( t \right) \right\|_{{{L}^{2}}\left( {{\mathbb{R}}^{n}} \right)}^{2} \right)+\lambda \alpha \mathbb{E}\left( {{\left\| v\left( t \right) \right\|}^{2}} \right)+{{\lambda }^{-1}}\alpha \left\| {{G}_{2}}\left( t \right) \right\|_{{{L}^{2}}\left( {{\mathbb{R}}^{n}} \right)}^{2} \\
  & +\beta \left( \lambda +2{{\left\| {{\phi }_{7}}\left( t \right) \right\|}_{{{L}^{\infty }}\left( {{\mathbb{R}}^{n}} \right)}}+{{\left\| {{\psi }_{g}} \right\|}_{{{L}^{1}}\left( {{\mathbb{R}}^{n}} \right)}}+{{\left\| {{\psi }_{g}} \right\|}_{{{L}^{\infty }}\left( {{\mathbb{R}}^{n}} \right)}} \right)\mathbb{E}\left( {{\left\| u\left( t \right) \right\|}^{2}} \right).
\end{split}
\end{align}
For the third and fifth term on the right-hand side of \eqref{d5}, by \eqref{c16} and \eqref{c17} we can get
	\begin{align}\label{d8}
		\begin{split}
	 & \beta \mathbb{E}\left( \left\| {{\sigma }}\left( t,u\left( t \right),{{\mathcal{L}}_{u\left( t \right)}} \right) \right\|_{{{L}_{2}}\left( {{l}^{2}},{{L}^{2}}\left( {{\mathbb{R}}^{n}} \right) \right)}^{2} \right)+\alpha  \mathbb{E}\left( \left\| {\delta}\left( t,v\left( t \right) \right) \right\|_{{{L}_{2}}\left( {{l}^{2}},{{L}^{2}}\left( {{\mathbb{R}}^{n}} \right) \right)}^{2} \right) \\
	 \le&  8\beta {{\left\| w \right\|}^{2}}{{{\left\| {{\beta }_{1}} \right\|}_{l^{2}}^{2}}}+2\left( \beta \left\| {{\theta }_{1}}\left( t \right) \right\|_{{{L}^{2}}\left( {{\mathbb{R}}^{n}},{{l}^{2}} \right)}^{2}+\alpha \left\| {{\theta }_{2}}\left( t \right) \right\|_{{{L}^{2}}\left( {{\mathbb{R}}^{n}},{{l}^{2}} \right)}^{2} \right) \\
	 & +\left( 2 \|\delta\|^2_{l^2}+8 {{\left\| w \right\|}^{2}}\left\| {{\beta }_{1}} \right\|_{{{l}^{2}}}^{2}+4 \left\| w \right\|_{{{L}^{\infty }}\left( {{\mathbb{R}}^{n}} \right)}^{2}\left\| {{\gamma }_{1}} \right\|_{{{l}^{2}}}^{2} \right)\mathbb{E}\left( \beta {{\left\| u\left( t \right) \right\|}^{2}}+\alpha {{\left\| v\left( t \right) \right\|}^{2}} \right).
	\end{split}
	\end{align}
 It follows from \eqref{d5}-\eqref{d8} that for all  $t\geq\tau $,
	\begin{align}\label{d9}
		\begin{split}
		 & \frac{d}{dt}\mathbb{E}\left( \beta  {{\left\| u\left( t \right) \right\|}^{2}}+\alpha {{\left\| v\left( t \right) \right\|}^{2}} \right)+2\beta  \mathbb{E}\left( {{\left\| \nabla u\left( s \right) \right\|}^{2}} \right) +2\beta {{\alpha }_{1}}\mathbb{E}\left( \left\| u\left( t \right) \right\|_{{{L}^{p}}\left( {{\mathbb{R}}^{n}} \right)}^{p} \right) \\
		 \le &8\beta {{\left\| w \right\|}^{2}}{{{\left\| {{\beta }_{1}} \right\|}_{l^{2}}^{2}}}+2\left( \beta \left\| {{\theta }_{1}}\left( t \right) \right\|_{{{L}^{2}}\left( {{\mathbb{R}}^{n}},{{l}^{2}} \right)}^{2}+\alpha \left\| {{\theta }_{2}}\left( t \right) \right\|_{{{L}^{2}}\left( {{\mathbb{R}}^{n}},{{l}^{2}} \right)}^{2} \right) \\
		& +2\beta {{\left\| {{\phi }_{1}}\left( t \right) \right\|}_{_{{{L}^{1}}\left( {{\mathbb{R}}^{n}} \right)}}}+{{\lambda }^{-1}}\alpha \left\| {{G}_{2}}\left( t \right) \right\|_{{{L}^{2}}\left( {{\mathbb{R}}^{n}} \right)}^{2}+{{\lambda }^{-1}}\beta \mathbb{E}\left( \left\| {{\phi }_{g}}\left( t \right) \right\|_{{{L}^{2}}\left( {{\mathbb{R}}^{n}} \right)}^{2} \right) \\
		& +2\left( \|\delta\|^2_{l^2}+{{\left\| {{\phi }_{1}}\left( t \right) \right\|}_{_{{{L}^{\infty }}\left( {{\mathbb{R}}^{n}} \right)}}}+{{\left\| {{\psi }_{1}} \right\|}_{_{{{L}^{1}}\left( {{\mathbb{R}}^{n}} \right)}}} \right)\mathbb{E}\left( \beta {{\left\| u\left( t \right) \right\|}^{2}}+\alpha {{\left\| v\left( t \right) \right\|}^{2}} \right) \\
		& +\left( -\lambda +2{{\left\| {{\phi }_{7}}\left( t \right) \right\|}_{{{L}^{\infty }}\left( {{\mathbb{R}}^{n}} \right)}}+{{\left\| {{\psi }_{g}} \right\|}_{{{L}^{1}}\left( {{\mathbb{R}}^{n}} \right)}}+{{\left\| {{\psi }_{g}} \right\|}_{{{L}^{\infty }}\left( {{\mathbb{R}}^{n}} \right)}} \right)\mathbb{E}\left( \beta {{\left\| u\left( t \right) \right\|}^{2}}+\alpha {{\left\| v\left( t \right) \right\|}^{2}} \right) \\
		& +\left( 8 {{\left\| w \right\|}^{2}}\left\| {{\beta }_{1}} \right\|_{{{l}^{2}}}^{2}+4 \left\| w \right\|_{{{L}^{\infty }}\left( {{\mathbb{R}}^{n}} \right)}^{2}\left\| {{\gamma }_{1}} \right\|_{{{l}^{2}}}^{2} \right)\mathbb{E}\left( \beta {{\left\| u\left( t \right) \right\|}^{2}}+\alpha {{\left\| v\left( t \right) \right\|}^{2}} \right) .
	\end{split}
	\end{align}
	Multiplying \eqref{d9} by ${{e}^{\eta t}}$ and integrating the resulting inequality on $\left( \tau -t,\tau  \right)$ with $t\in {{\mathbb{R}}^{+}}$, we get
	\begin{align}\label{d10}
		\begin{split}
		& \mathbb{E}\left( \beta  {{\left\| u\left( t \right) \right\|}^{2}}+\alpha {{\left\| v\left( t \right) \right\|}^{2}} \right)+2\beta \int_{\tau -t}^{\tau }{{{e}^{\eta \left( s-\tau  \right)}}\mathbb{E}\left( {{\left\| \nabla u\left( s \right) \right\|}^{2}} \right)ds} \\
		& +2\beta {{\alpha }_{1}}\int_{\tau -t}^{\tau }{{{e}^{\eta \left( s-\tau  \right)}}\mathbb{E}\left( \left\| u\left( \tau ,\tau -s,{{\xi }_{1}} \right) \right\|_{{{L}^{p}}\left( {{\mathbb{R}}^{n}} \right)}^{p} \right)ds} \\
		 \le& \mathbb{E}\left( \beta  {{\left\| {{\xi }_{1}} \right\|}^{2}}+\alpha {{\left\| {{\xi }_{2}} \right\|}^{2}} \right){{e}^{-\eta t}}+{{\lambda }^{-1}}\beta \int_{\tau -t}^{\tau }{{{e}^{\eta \left( s-\tau  \right)}}\left\| {{\phi }_{g}}\left( s \right) \right\|_{_{{{L}^{2}}\left( {{\mathbb{R}}^{n}} \right)}}^{2}ds} \\
		& +2\beta \int_{\tau -t}^{\tau }{{{e}^{\eta \left( s-\tau  \right)}}{{\left\| {{\phi }_{1}}\left( s \right) \right\|}_{_{{{L}^{1}}\left( {{\mathbb{R}}^{n}} \right)}}}ds}+{{\lambda }^{-1}}\alpha \int_{\tau -t}^{\tau }{{{e}^{\eta \left( s-\tau  \right)}}\left\| {{G}_{2}}\left( s \right) \right\|_{_{{{L}^{2}}\left( {{\mathbb{R}}^{n}} \right)}}^{2}ds} \\
		& +2\int_{\tau -t}^{\tau }{{{e}^{\eta \left( s-\tau  \right)}}\left( \beta \left\| {{\theta }_{1}}\left( s \right) \right\|_{{{L}^{2}}\left( {{\mathbb{R}}^{n}},{{l}^{2}} \right)}^{2}+\alpha \left\| {{\theta }_{2}}\left( s \right) \right\|_{{{L}^{2}}\left( {{\mathbb{R}}^{n}},{{l}^{2}} \right)}^{2} \right)ds}+8\beta {{\left\| w \right\|}^{2}}\left\| {{\beta }_{1}} \right\|_{{{l}^{2}}}^{2}{{\eta }^{-1}} \\
		& +\int_{\tau -t}^{\tau }{{{e}^{\eta \left( s-\tau  \right)}}2\left( \|\delta\|^2_{l^2}+{{\left\| {{\phi }_{7}}\left( s \right) \right\|}_{_{{{L}^{\infty }}\left( {{\mathbb{R}}^{n}} \right)}}}+{{\left\| {{\psi }_{1}} \right\|}_{_{{{L}^{1}}\left( {{\mathbb{R}}^{n}} \right)}}} \right)\mathbb{E}\left( \beta {{\left\| u\left( s \right) \right\|}^{2}}+\alpha {{\left\| v\left( s \right) \right\|}^{2}} \right)ds} \\
		& +\int_{\tau -t}^{\tau }{{{e}^{\eta \left( s-\tau  \right)}}\left( 2{{\left\| {{\phi }_{1}}\left( s \right) \right\|}_{{{L}^{\infty }}\left( {{\mathbb{R}}^{n}} \right)}}+{{\left\| {{\psi }_{g}} \right\|}_{{{L}^{1}}\left( {{\mathbb{R}}^{n}} \right)}}+{{\left\| {{\psi }_{g}} \right\|}_{{{L}^{\infty }}\left( {{\mathbb{R}}^{n}} \right)}} \right)\mathbb{E}\left( \beta {{\left\| u\left( s \right) \right\|}^{2}}+\alpha {{\left\| v\left( s \right) \right\|}^{2}} \right)ds} \\
		& +\int_{\tau -t}^{\tau }{{{e}^{\eta \left( s-\tau  \right)}}\left( -\lambda +8{{\left\| w \right\|}^{2}}\left\| {{\beta }_{1}} \right\|_{{{l}^{2}}}^{2}+4\left\| w \right\|_{{{L}^{\infty }}\left( {{\mathbb{R}}^{n}} \right)}^{2}\left\| {{\gamma }_{1}} \right\|_{{{l}^{2}}}^{2} \right)\mathbb{E}\left( \beta {{\left\| u\left( s \right) \right\|}^{2}}+\alpha {{\left\| v\left( s \right) \right\|}^{2}} \right)ds}.
	\end{split}
	\end{align}
	By  \eqref{d5}-\eqref{d10} we get for all   $t\ge \tau $,
	\begin{align}\label{d11}
		\begin{split}
		& \mathbb{E}\left( \beta  {{\left\| u\left( t \right) \right\|}^{2}}+\alpha {{\left\| v\left( t \right) \right\|}^{2}} \right)+2\beta \int_{\tau -t}^{\tau }{{{e}^{\eta \left( s-\tau  \right)}}\mathbb{E}\left( {{\left\| \nabla u\left( s \right) \right\|}^{2}} \right)ds} \\
		& +2\beta {{\alpha }_{1}}\int_{\tau -t}^{\tau }{{{e}^{\eta \left( s-\tau  \right)}}\mathbb{E}\left( \left\| u\left( \tau ,\tau -s,{{\xi }_{1}} \right) \right\|_{{{L}^{p}}\left( {{\mathbb{R}}^{n}} \right)}^{p} \right)ds} \\
		 \le& \mathbb{E}\left( \beta  {{\left\| {{\xi }_{1}} \right\|}^{2}}+\alpha {{\left\| {{\xi }_{2}} \right\|}^{2}} \right){{e}^{-\eta t}}+{{c}_{1}}\\
		   &+{{c}_{1}}\int_{-\infty }^{\tau }{{{e}^{\eta \left( s-\tau  \right)}}\left( \left\| {{\theta }_{1}}\left( s \right) \right\|_{{{L}^{2}}\left( {{\mathbb{R}}^{n}},{{l}^{2}} \right)}^{2}+\left\| {{\theta }_{2}}\left( s \right) \right\|_{{{L}^{2}}\left( {{\mathbb{R}}^{n}},{{l}^{2}} \right)}^{2}+\left\| {{\phi }_{g}}\left( s \right) \right\|_{_{{{L}^{2}}\left( {{\mathbb{R}}^{n}} \right)}}^{2} \right)ds}.
			\end{split}
	\end{align}
	Note that $\mathcal{L}_{{{\xi }_{0}}}  \in D_{1}\left( \tau -t \right)$  then we have
\[\underset{t\to \infty }{\mathop{\lim }}\,{{e}^{-\eta t}}\mathbb{E}\left( \beta  {{\left\| {{\xi }_{1}} \right\|}^{2}}+\alpha {{\left\| {{\xi }_{2}} \right\|}^{2}} \right)\le \underset{t\to \infty }{\mathop{\lim }}\,{{e}^{-\eta t}}\left\| {{D}_{1}}\left( \tau -t \right) \right\|_{{{\mathcal{P}}_{2}}\left( {\mathbb{L}^{2}}\left( {{\mathbb{R}}^{n}} \right) \right)}^{2}=0.\]
	And hence there exists $T=T\left( \tau ,D_{1} \right)$ such that for all $t\ge T$,
		\begin{equation*}
	{{e}^{-\eta t}}\mathbb{E}\left( \beta  {{\left\| {{\xi }_{1}} \right\|}^{2}}+\alpha {{\left\| {{\xi }_{2}} \right\|}^{2}} \right)\le \frac{{{c}_{1}}}{\lambda },
	\end{equation*}
	which along with \eqref{d11}  concludes the proof.
\end{proof}
\begin{lem}\label{lem4.2}
	Suppose assumption  $\mathbf{(H_{1})-({H_{4}})}$, \eqref{c21} and \eqref{c24} hold, then for every $\tau\in \mathbb{R}$ and ${{D}_{1}}=\left\{ {{D}_{1}}\left( t \right):t\in \mathbb{R} \right\}\in {{\mathcal{D}}_{0}}$, there exists $T=T(\tau,D_{1})>0$ such that for all $t\geq T$, the solution  $k=(u,v)$ of  \eqref{c18}-\eqref{c19} satisfies
\begin{align*}
& \int_{\tau -1}^{\tau }{\mathbb{E}\left( \left\| u\left( s ,\tau -s,{{\xi }_{0}} \right)   \right\|_{{{H}^{1}}\left( {{\mathbb{R}}^{n}} \right)}^{2}+\left\| v\left( s ,\tau -s,{{\xi }_{0}} \right)   \right\|_{{{L}^{2}}\left( {{\mathbb{R}}^{n}} \right)}^{2} \right)}ds \\
 \le& {{M}_{2}}+{{M}_{2}}\int_{-\infty }^{\tau }{{{e}^{\eta \left( s-\tau  \right)}}\left( \left\| {{\phi }_{g}}\left( s \right) \right\|_{_{{{L}^{2}}\left( {{\mathbb{R}}^{n}} \right)}}^{2}+\left\| {{\theta }_{1}}\left( s \right) \right\|_{{{L}^{2}}\left( {{\mathbb{R}}^{n}},{{l}^{2}} \right)}^{2}+\left\| {{\theta }_{2}}\left( s \right) \right\|_{{{L}^{2}}\left( {{\mathbb{R}}^{n}},{{l}^{2}} \right)}^{2} \right)}ds,
\end{align*}
where $\xi _{0}\in L_{{{\mathcal{F}}_{\tau -t}}}^{2}\left( \Omega ,{\mathbb{L}^{2}}\left( {{\mathbb{R}}^{n}} \right) \right)$ with ${{\mathcal{L}}_{{{\xi }_{0}}}}\in {{D}_{1}}\left( \tau -t \right)$,  $\eta>0$ is the same number as in  \eqref{c21} and $M_{2}$ is a positive constant independent of $\tau$ and $D_{1}$.
\end{lem}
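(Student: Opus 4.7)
The plan is to deduce Lemma 5.2 as an essentially immediate consequence of the weighted estimate \eqref{d2} already established in Lemma 5.1. The key observation is that on the unit sub-interval $[\tau-1,\tau]$ the exponential weight $e^{\eta(s-\tau)}$ appearing in \eqref{d2} satisfies $e^{\eta(s-\tau)}\ge e^{-\eta}$; consequently, an unweighted integral over $[\tau-1,\tau]$ is dominated by $e^{\eta}$ times the weighted integral over any larger backward window, provided $t\ge 1$ so that $[\tau-1,\tau]\subseteq[\tau-t,\tau]$.

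Concretely, I would choose $T=\max\{T_{1},1\}$, where $T_{1}=T_{1}(\tau,D_{1})$ is the absorption time furnished by Lemma 5.1. For every $t\ge T$ and every $\xi_{0}$ with $\mathcal{L}_{\xi_{0}}\in D_{1}(\tau-t)$, pointwise comparison of the integrands yields
\[
\int_{\tau-1}^{\tau}\mathbb{E}\!\left(\|u(s)\|_{H^{1}(\mathbb{R}^{n})}^{2}+\|v(s)\|_{L^{2}(\mathbb{R}^{n})}^{2}\right)ds\le e^{\eta}\int_{\tau-t}^{\tau}e^{\eta(s-\tau)}\mathbb{E}\!\left(\|u(s)\|_{H^{1}(\mathbb{R}^{n})}^{2}+\|v(s)\|_{L^{2}(\mathbb{R}^{n})}^{2}\right)ds.
\]
Inserting \eqref{d2} on the right-hand side and setting $M_{2}:=e^{\eta}M_{1}$ gives precisely the announced bound, with $M_{2}$ depending neither on $\tau$ nor on $D_{1}$ since $M_{1}$ enjoys the same property.

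A more self-contained alternative would re-start from the Ito energy identity \eqref{d9} (before the exponential multiplier is introduced) and integrate it directly on $[\tau-1,\tau]$. The dissipative contributions $2\beta\int\mathbb{E}\|\nabla u(s)\|^{2}ds$, $2\beta\lambda\int\mathbb{E}\|u(s)\|^{2}ds$ and $2\alpha\gamma\int\mathbb{E}\|v(s)\|^{2}ds$ supply the required $H^{1}$ and $L^{2}$ norms on the left; the initial-datum term $\mathbb{E}(\beta\|u(\tau-1)\|^{2}+\alpha\|v(\tau-1)\|^{2})$ is controlled by applying \eqref{d1} of Lemma 5.1 at time $\tau-1$; the $\phi_{g},\theta_{1},\theta_{2}$ source terms on $[\tau-1,\tau]$ are dominated by their weighted counterparts on $(-\infty,\tau]$ because $e^{\eta(s-\tau)}\ge e^{-\eta}$ there; and the coupling terms linear in $\mathbb{E}(\|u\|^{2}+\|v\|^{2})$ are again absorbed via \eqref{d1}. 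Either route is routine once Lemma 5.1 is in hand, and I do not foresee any genuine obstacle; the only bookkeeping point is to enlarge the pullback-absorbing time so that $t\ge 1$.
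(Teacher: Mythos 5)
Your proposal is correct and coincides with the paper's (implicit) argument: the paper states this lemma without a separate proof, treating it as an immediate consequence of the weighted estimate \eqref{d2} of Lemma 5.1, which is exactly your deduction via $e^{\eta(s-\tau)}\ge e^{-\eta}$ on $[\tau-1,\tau]$, the enlarged absorbing time $T=\max\{T_{1},1\}$, and $M_{2}=e^{\eta}M_{1}$. The only caveat is notational: the arguments $u(s,\tau-s,\xi_{0})$ in the statement (and $u(\tau,\tau-s,\xi_{0})$ in \eqref{d2}) should be read as the solution at time $s$ with initial time $\tau-t$, as in the proof of Lemma 5.1, which is the interpretation you correctly adopt.
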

\begin{lem}\label{lem4.3}
Suppose assumption  $\mathbf{(H_{1})-({H_{4}})}$, \eqref{c21} and \eqref{c24} hold, then for every $\tau\in \mathbb{R}$ and ${{D}_{1}}=\left\{ {{D}_{1}}\left( t \right):t\in \mathbb{R} \right\}\in {{\mathcal{D}}_{0}}$, there exists $T=T(\tau,D_{1})>0$ such that for all $t\geq T$, the solution  $k=(u,v)$ of  \eqref{c18}-\eqref{c19} satisfies
	\begin{align*}
		 & \mathbb{E}\left( \left\| u\left( \tau ,\tau -s,{{\xi }_{0}} \right)   \right\|_{{{H}^{1}}\left( {{\mathbb{R}}^{n}} \right)}^{2} \right) \\
	 \le 	&{{M}_{3}}+{{M}_{3}}\int_{-\infty }^{\tau }{{{e}^{\eta \left( s-\tau  \right)}}\left( \left\| {{\phi }_{g}}\left( s \right) \right\|_{_{{{L}^{2}}\left( {{\mathbb{R}}^{n}} \right)}}^{2}+\left\| {{\theta }_{1}}\left( s \right) \right\|_{{{L}^{2}}\left( {{\mathbb{R}}^{n}},{{l}^{2}} \right)}^{2}+\left\| {{\theta }_{2}}\left( s \right) \right\|_{{{L}^{2}}\left( {{\mathbb{R}}^{n}},{{l}^{2}} \right)}^{2} \right)}ds,
	\end{align*}
	where $\xi _{0}\in L_{{{\mathcal{F}}_{\tau -t}}}^{2}\left( \Omega ,{\mathbb{L}^{2}}\left( {{\mathbb{R}}^{n}} \right) \right)$ with ${{\mathcal{L}}_{{{\xi }_{0}}}}\in {{D}_{1}}\left( \tau -t \right)$,  $\eta>0$ is the same number as in  \eqref{c21} and $M_{3}$ is a positive constant independent of $\tau$ but not on $D_{1}$.
\end{lem}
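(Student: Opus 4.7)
The plan is to apply It\^o's formula to $\|\nabla u(t)\|^2$ and then lift the integrated $H^1$ control from Lemmas \ref{lem4.1}--\ref{lem4.2} to a pointwise-in-time estimate at $t=\tau$ via the uniform Gronwall lemma on the interval $[\tau-1,\tau]$. Since Lemma \ref{lem4.1} already bounds $\mathbb{E}\|u(\tau)\|^2$, it suffices to bound $\mathbb{E}\|\nabla u(\tau)\|^2$ and then add the two controls.

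The first step is to test the $u$-equation of \eqref{c18} formally with $-\Delta u$ and apply It\^o's formula for $F(u)=\|\nabla u\|^2$; integration by parts yields, modulo a local martingale,
\begin{align*}
& d\|\nabla u\|^2 + 2\|\Delta u\|^2\, dt + 2\lambda\|\nabla u\|^2\, dt \\
& \qquad = -2\alpha(\nabla u,\nabla v)\, dt - 2\!\int_{\R^n}\!\Bigl(\tfrac{\partial f}{\partial x}\cdot\nabla u + \tfrac{\partial f}{\partial u}|\nabla u|^2\Bigr)dx\, dt + 2\!\int_{\R^n}\!\Bigl(\tfrac{\partial G_1}{\partial x}\cdot\nabla u + \tfrac{\partial G_1}{\partial u}|\nabla u|^2\Bigr)dx\, dt + 2\sum_{k=1}^{\infty}\|\nabla\sigma_k\|^2\, dt.
\end{align*}
I would then take expectations and bound each contribution with the appropriate hypothesis: \eqref{c4}--\eqref{c5} for the $f$-terms (noting that $\mu(\|\cdot\|^2)$ becomes $\mathbb{E}\|u(t)\|^2$ along the solution), \eqref{c9}--\eqref{c10} for the $G_1$-terms, and the chain rule $\nabla\sigma_k=\nabla\theta_{1,k}+(\nabla w)\sigma_k+w\,\tfrac{\partial\sigma_k}{\partial u}\nabla u$ combined with \eqref{c11}, \eqref{c12}, \eqref{c14} for the It\^o correction. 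Invoking Young's inequality on $\alpha|(\nabla u,\nabla v)|=\alpha|(\Delta u,v)|\leq \tfrac12\|\Delta u\|^2+C\|v\|^2$ to absorb the coupling into the dissipation, one obtains a differential inequality of the form
\begin{align*}
\tfrac{d}{dt}\mathbb{E}\|\nabla u(t)\|^2 \leq C_1\,\mathbb{E}\|\nabla u(t)\|^2 + C_2\bigl(\mathbb{E}\|u(t)\|^2 + \mathbb{E}\|v(t)\|^2\bigr) + h(t),
\end{align*}
where $h(t)$ collects $\|\phi_g(t)\|^2$, $\|\theta_1(t)\|_{L^2(\R^n,l^2)}^2$, $\|\theta_2(t)\|_{L^2(\R^n,l^2)}^2$ together with the $L^{\infty}$-in-time data coming from $\phi_5^2$, $\phi_8^2$ and $\sum_k\|\nabla\theta_{1,k}\|^2$.

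Applying the uniform Gronwall inequality on any subinterval $[r,\tau]\subseteq[\tau-1,\tau]$ and averaging over $r\in[\tau-1,\tau]$, the pointwise value $\mathbb{E}\|\nabla u(\tau)\|^2$ is controlled by $\int_{\tau-1}^{\tau}\mathbb{E}\|\nabla u(s)\|^2 ds$, which is exactly supplied by Lemma \ref{lem4.2}, plus the integrated forcing, which together with the weight produced by \eqref{c24} yields the exponentially weighted right-hand side in the statement. Combined with the $L^2$ bound from Lemma \ref{lem4.1}, this gives the claim. The main obstacle is the cross-diffusion term $\alpha(\nabla u,\nabla v)$: no $H^1$-regularity for $v$ is available, so one is forced to rewrite it as $-\alpha(\Delta u,v)$ and spend half of the $\|\Delta u\|^2$ dissipation via Young's inequality in order to close against the $\mathbb{E}\|v\|^2$ estimate of Lemma \ref{lem4.1}. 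This is precisely why the full $\|\Delta u\|^2$ term must be retained on the left-hand side of the It\^o identity rather than discarded upfront.
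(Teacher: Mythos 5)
Your proposal is correct and follows essentially the same route as the paper: Itô's formula for $\|\nabla u\|^2$, rewriting the coupling as $(v,\Delta u)$ and absorbing it into the $\|\Delta u\|^2$ dissipation by Young's inequality, estimating the $f$-, $G_1$- and Itô-correction terms via \eqref{c4}--\eqref{c5}, \eqref{c9}--\eqref{c10} and the chain rule with \eqref{c11}, \eqref{c12}, \eqref{c14}, and then a uniform-Gronwall/averaging-over-the-initial-time argument on $[\tau-1,\tau]$ to upgrade the integrated $H^1$ bound of Lemma \ref{lem4.2} (together with Lemma \ref{lem4.1}) to a pointwise bound at $t=\tau$. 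The paper implements the same idea by integrating from $\varsigma\in(\tau-1,\tau)$ to $\tau$ and then integrating in $\varsigma$, which is exactly your averaging step.
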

\begin{proof}
By \eqref{c18}  and Ito's formula we can get for all $\tau \in \mathbb{R}$, $t>1$ and $\varsigma \in \left( \tau -1,\tau  \right)$,
\begin{align}
	\label{d12}
	\begin{split}
	& {{\left\| \nabla u\left( \tau \right)  \right\|}^{2}}+2\int_{\varsigma }^{\tau }{{{\left\| \Delta u\left( s \right)   \right\|}^{2}}ds}+2\lambda \int_{\varsigma }^{\tau }{{{\left\| \nabla u\left( s \right)  \right\|}^{2}}ds} \\
	 =&{{\left\| \nabla u\left( \varsigma  \right)   \right\|}^{2}}+2\alpha \int_{\varsigma }^{\tau }{\left( v\left( s \right),\Delta u\left( s \right) \right)ds}-2\int_{\varsigma }^{\tau }{\left( \nabla f\left( s,x,u\left( s \right),{{\mathcal{L}}_{u\left( s \right)}} \right),\nabla u\left( s \right) \right)ds} \\
	& +2\int_{\varsigma }^{\tau }{\left( \nabla {{G}_{1}}\left( s,x,u\left( s \right),{{\mathcal{L}}_{u\left( s \right)}} \right),\nabla u\left( s \right) \right)ds}+\int_{\varsigma }^{\tau}{\left\| {{\sigma }}\left( s,u\left( s \right),{{\mathcal{L}}_{u\left( s \right)}} \right) \right\|_{{{L}_{2}}\left( {{l}^{2}},{{L}^{2}}\left( {{\mathbb{R}}^{n}} \right) \right)}^{2}}ds \\
	& +2\int_{\varsigma }^{\tau}{\left( {{\sigma }}\left( s,u\left( s \right),{{\mathcal{L}}_{u\left( s \right)}} \right),\nabla u\left( s \right) \right)}dW\left( s \right).
	\end{split}
\end{align}
For the second term on the right-hand side of \eqref{d12}, we can get
\begin{align}\label{012}
		2\alpha \int_{\varsigma }^{\tau }{\left( v\left( s \right),\Delta u\left( s \right) \right)ds}\text{ }\le \alpha^2 \int_{\varsigma }^{\tau }{{{\left\| v\left( s \right) \right\|}^{2}}ds}+ \int_{\varsigma }^{\tau }{{{\left\| \Delta u\left( s \right) \right\|}^{2}}ds}.
\end{align}
For the third term on the right-hand side of \eqref{d12}, by \eqref{c4} and \eqref{c5} we can get
\begin{align}
	\label{d13}
	\begin{split}
	 & -2\int_{\varsigma }^{\tau }{\left( \nabla f\left( s,x,u\left( s \right),{{\mathcal{L}}_{u\left( s \right)}} \right),\nabla u\left( s \right) \right)ds} \\
	  =&-2\int_{\varsigma }^{\tau }{\int_{{{\mathbb{R}}^{n}}}{\frac{\partial f}{\partial x}}\left( s,x,u\left( s \right),{{\mathcal{L}}_{u\left( s \right)}} \right)\cdot \nabla u\left( s \right)dxds}-2\int_{\varsigma }^{\tau }{\int_{{{\mathbb{R}}^{n}}}{\frac{\partial f}{\partial u}}\left( s,x,u\left( s \right),{{\mathcal{L}}_{u\left( s \right)}} \right){{\left\vert \nabla u\left( s \right) \right\vert}^{2}}dxds} \\
	  \le& 2\int_{\varsigma }^{\tau }{\int_{{{\mathbb{R}}^{n}}}{{{\phi }_{5}}\left( s,x \right)\left( 1+\left\vert u\left( s \right) \right\vert+\sqrt{\mathbb{E}\left( {{\left\| \cdot  \right\|}^{2}} \right)} \right)}\nabla u\left( s \right)dxds}+2\int_{\varsigma }^{\tau }{\int_{{{\mathbb{R}}^{n}}}{{{\phi }_{4}}\left( s,x \right)}{{\left\vert \nabla u\left( s \right) \right\vert}^{2}}dxds} \\
	  \le& \int_{\varsigma }^{\tau }{\left( {{\left\| {{\phi }_{5}}\left( s \right) \right\|}^{2}}+{{\left\| \nabla u\left( s \right) \right\|}^{2}} \right)ds}+\int_{\varsigma }^{\tau }{{{\left\| {{\phi }_{5}}\left( s \right) \right\|}_{{{L}^{2}}\left( {{\mathbb{R}}^{n}} \right)}}\mathbb{E}\left( {{\left\| u\left( s \right) \right\|}^{2}} \right)ds} \\
	 & +\int_{\varsigma }^{\tau }{{{\left\| {{\phi }_{5}}\left( s \right) \right\|}_{{{L}^{2}}\left( {{\mathbb{R}}^{n}} \right)}}\left( {{\left\| u\left( s \right) \right\|}^{2}}+{{\left\| \nabla u\left( s \right) \right\|}^{2}} \right)ds}+2\int_{\varsigma }^{\tau }{{{\left\| {{\phi }_{4}}\left( s \right) \right\|}_{{{L}^{\infty }}\left( {{\mathbb{R}}^{n}} \right)}}{{\left\| \nabla u\left( s \right) \right\|}^{2}}ds} \\
	  \le& \left( 1+2{{\left\| {{\phi }_{5}} \right\|}_{{{L}^{\infty }}\left( \mathbb{R},{{L}^{\infty }}\left( {{\mathbb{R}}^{n}} \right) \right)}}+2{{\left\| {{\phi }_{4}} \right\|}_{{{L}^{\infty }}\left( \mathbb{R},{{L}^{\infty }}\left( {{\mathbb{R}}^{n}} \right) \right)}} \right)\int_{\tau -1}^{\tau }{\left\| u\left( s \right) \right\|_{{{H}^{1}}\left( {{\mathbb{R}}^{n}} \right)}^{2}}ds \\
	 & +\left\| {{\phi }_{5}} \right\|_{{{L}^{\infty }}\left( \mathbb{R},{{L}^{2}}\left( {{\mathbb{R}}^{n}} \right) \right)}^{2}+{{\left\| {{\phi }_{5}} \right\|}_{{{L}^{\infty }}\left( \mathbb{R},{{L}^{1}}\left( {{\mathbb{R}}^{n}} \right) \right)}}\int_{\tau -1}^{\tau }{\mathbb{E}\left( {{\left\| u\left( s \right) \right\|}^{2}} \right)}ds.
\end{split}
\end{align}
For the fourth term on the right-hand side of \eqref{d12}, similarly by \eqref{c8} and \eqref{c9} we can get
\begin{align}\label{d14}
	\begin{split}
	 & 2\int_{\varsigma }^{\tau }{\left( \nabla {{G}_{1}}\left( s,x,u\left( s \right),{{\mathcal{L}}_{u\left( s \right)}} \right),\nabla u\left( s \right) \right)ds} \\
	  =&2\int_{\varsigma }^{\tau }{\int_{{{\mathbb{R}}^{n}}}{\frac{\partial {{G}_{1}}}{\partial x}}\left( s,u\left( s \right),{{\mathcal{L}}_{u\left( s \right)}} \right)\cdot \nabla u\left( s \right)dxds}+2\int_{\varsigma }^{\tau }{\int_{{{\mathbb{R}}^{n}}}{\frac{\partial {{G}_{1}}}{\partial u}}\left( s,u\left( s \right),{{\mathcal{L}}_{u\left( s \right)}} \right){{\left\vert \nabla u\left( s \right) \right\vert}^{2}}dxds} \\
	  \le& \left( 1+2{{\left\| {{\phi }_{7}} \right\|}_{{{L}^{\infty }}\left( \mathbb{R},{{L}^{\infty }}\left( {{\mathbb{R}}^{n}} \right) \right)}}+2{{\left\| {{\phi }_{7}} \right\|}_{{{L}^{\infty }}\left( \mathbb{R},{{L}^{1}}\left( {{\mathbb{R}}^{n}} \right) \right)}} \right)\int_{\tau -1}^{\tau }{\left\| u\left( s \right) \right\|_{{{H}^{1}}\left( {{\mathbb{R}}^{n}} \right)}^{2}}ds \\
	 & +\left\| {{\phi }_{8}} \right\|_{{{L}^{2}}\left( \tau -1,\tau ,{{L}^{1}}\left( {{\mathbb{R}}^{n}} \right) \right)}^{2}+{{\left\| {{\phi }_{5}} \right\|}_{{{L}^{\infty }}\left( \mathbb{R},{{L}^{1}}\left( {{\mathbb{R}}^{n}} \right) \right)}}\int_{\tau -1}^{\tau }{\mathbb{E}\left( {{\left\| u\left( s \right) \right\|}^{2}} \right)}ds.
\end{split}
\end{align}
For the fifth term on the right-hand side of \eqref{d12}, by \eqref{c11}-\eqref{c15} we can get
\begin{align}\label{d15}
	\begin{split}
	& \int_{\varsigma }^{\tau }{\left\| {{\sigma }}\left( s,u\left( s \right),{{\mathcal{L}}_{u\left( s \right)}} \right) \right\|_{{{L}_{2}}\left( {{l}^{2}},{{L}^{2}}\left( {{\mathbb{R}}^{n}} \right) \right)}^{2}}ds \\
	 \le& 2\sum\limits_{k=1}^{\infty }{\int_{\varsigma }^{\tau }{{{\left\| \nabla {{\theta }_{k}}\left( s \right) \right\|}^{2}}}ds}+4\sum\limits_{k=1}^{\infty }{\int_{\varsigma }^{\tau }{{{\left\| \left( \nabla w \right){{\sigma }_{1,k}}\left( s,u\left( s \right),{{\mathcal{L}}_{u\left( s \right)}} \right) \right\|}^{2}}}ds} \\
	& +4\sum\limits_{k=1}^{\infty }{\int_{\varsigma }^{\tau }{{{\left\| w\nabla u\left( s \right)\frac{\partial {{\sigma }_{1}}}{\partial u}\left( s,u\left( s \right),{{\mathcal{L}}_{u\left( s \right)}} \right) \right\|}^{2}}}ds}
	 \le {{c}_{2}}+{{c}_{2}}\int_{\tau -1}^{\tau }{\left( \mathbb{E}\left( {{\left\| u\left( s \right) \right\|}^{2}} \right)+\left\| u\left( s \right) \right\|_{{{H}^{1}}\left( {{\mathbb{R}}^{n}} \right)}^{2} \right)}ds.
\end{split}
\end{align}
By \eqref{d12}-\eqref{d15} and Lemma 5.1   that for all $\tau \in \mathbb{R}$, $t>1$ and $\varsigma \in \left( \tau -1,\tau  \right)$,
\begin{align}\label{d16}
	\begin{split}
	& \mathbb{E}\left( {{\left\| \nabla u\left( s \right) \right\|}^{2}} \right)\le \mathbb{E}\left( {{\left\| \nabla u\left( \varsigma  \right) \right\|}^{2}} \right) +{{c}_{3}}+{{c}_{3}}\int_{\tau -1}^{\tau }{\mathbb{E}\left( \left\| u\left( s \right) \right\|_{{{H}^{1}}\left( {{\mathbb{R}}^{n}} \right)}^{2} \right)}ds.
\end{split}
\end{align}
Integrating \eqref{d16} with respect to $\zeta$ on $(\tau-1,\tau)$ for all $\tau \in \mathbb{R}$ and $t\geq1$,
\begin{align*}
	\mathbb{E}\left( {{\left\| \nabla u\left( s \right) \right\|}^{2}} \right)\le {{c}_{3}}+\left( 1+{{c}_{3}} \right)\int_{\tau -1}^{\tau }{\mathbb{E}\left( \left\| u\left( s \right) \right\|_{{{H}^{1}}\left( {{\mathbb{R}}^{n}} \right)}^{2} \right)}ds,
\end{align*}
which together with Lemma 5.1 completes the proof.
\end{proof}
Next we will show the asymptotic compactness of solutions of system \eqref{c18}-\eqref{c19}. Since system \eqref{c18}-\eqref{c19}
 is a partly dissipative system and the initial condition $\xi _{0}$ is only in ${{L}^{2}}\left( \Omega ,{{L}^{2}}\left( {{\mathbb{R}}^{n}} \right) \right)$,
then for any $t\ge \tau$ , $v(t,\tau,\xi _{0})$ only belongs to ${{L}^{2}}\left( \Omega ,{{L}^{2}}\left( {{\mathbb{R}}^{n}} \right) \right)$, but not  ${{L}^{2}}\left( \Omega ,{{H}^{1}}\left( {{\mathbb{R}}^{n}} \right) \right)$. This makes it difficult for us to prove the asymptotic compactness of the solution directly by using the  Sobolev embedding theorem in bounded domains. In order to solve this difficulty, we decompose the solution. Let ${{v}}=v_{1}+v_{2},$ where $v_{1}$ and $v_{2}$ are the solutions of the following systems, respectively:
\begin{equation}\label{d17}
	\left\{ \begin{array}{l}
		 dv_{1}+\gamma v_{1}dt=\sum\limits_{k=1}^{\infty }{{{\delta }_{k}}v_{1}\left( t \right)d{{W}_{k}}\left( t \right)}, \\
		 v_{1}\left( \tau  \right)=\xi _{2},
	\end{array} \right.
\end{equation}
and
\begin{equation}\label{d18}
	\left\{ \begin{array}{l}
dv_{2}+\gamma v_{2}dt-\beta {{u}}dt={{G}_{2 }}\left( t,x \right)dt+ {\delta}\left( t,{{v}_{2}}\left( t \right) \right)dW\left( t \right), \\
 v_{2}\left( \tau  \right)=0.
	\end{array} \right.
\end{equation}
By \eqref{d17} and Gronwall inequality we can get for every $\tau\in \mathbb{R}$ and $D_{1}=\{D_{1}(t):t\in \mathbb{R}\}\in \mathcal{D}_0$,
\begin{align}\label{d19}
	\begin{split}
	\underset{t\to \infty }{\mathop{\lim }}\,\mathbb{E}\left( {{\left\| {{v}_{1}}\left( \tau  \right) \right\|}^{2}} \right)\le \underset{t\to \infty }{\mathop{\lim }}\,\mathbb{E}\left( \left\| {{\xi }_{0}} \right\|_{{{\mathbb{L}}^{2}}\left( {{\mathbb{R}}^{n}} \right)}^{2} \right){{e}^{-\left( 2\gamma -\left\| \delta  \right\|_{{{l}^{2}}}^{2} \right)t}}\le \underset{t\to \infty }{\mathop{\lim }}\,\left\| {{D}_{1}} \right\|_{{{\mathcal{P}}_{2}}\left( Z \right)}^{2}{{e}^{-\left( 2\gamma -\left\| \delta  \right\|_{{{l}^{2}}}^{2} \right)t}}=0,
	\end{split}
\end{align}
where ${{\xi }_{0}}=\left( \xi _{1},\xi _{2} \right)\in L_{{{\mathcal{F}}_{\tau -t}}}^{2}\left( \Omega ,{\mathbb{L}^{2}}\left( {{\mathbb{R}}^{n}} \right) \right)$ with $\mathcal{L}_{{{\xi }_{0}}}  \in D_{1}\left( \tau -t \right)$.
\begin{lem}\label{lem4.4}
Suppose assumption  $\mathbf{(H_{1})-({H_{4}})}$, \eqref{c21} and \eqref{c24} hold, then for every $\tau\in \mathbb{R}$ and ${{D}_{1}}=\left\{ {{D}_{1}}\left( t \right):t\in \mathbb{R} \right\}\in {{\mathcal{D}}_{0}}$, there exists $T=T(\tau,D_{1})>0$ such that for all $t\geq T$, the solution  $k=(u,v)$ of  \eqref{c18}-\eqref{c19} satisfies
	\begin{align*}
		 & \mathbb{E}\left( \left\| {{v}_{2}}\left( \tau ,\tau -t,0 \right)   \right\|_{{{H}^{1}}\left( {{\mathbb{R}}^{n}} \right)}^{2} \right) \\
		& \le {{M}_{4}}+{{M}_{4}}\int_{-\infty }^{\tau }{{{e}^{\eta \left( s-\tau  \right)}}\left( \left\| {{\phi }_{g}}\left( s \right) \right\|_{_{{{L}^{2}}\left( {{\mathbb{R}}^{n}} \right)}}^{2}+\left\| {{\theta }_{1}}\left( s \right) \right\|_{{{L}^{2}}\left( {{\mathbb{R}}^{n}},{{l}^{2}} \right)}^{2}+\left\| {{\theta }_{2}}\left( s \right) \right\|_{{{L}^{2}}\left( {{\mathbb{R}}^{n}},{{l}^{2}} \right)}^{2} \right)}ds,
	\end{align*}
where $\xi _{0}\in L_{{{\mathcal{F}}_{\tau -t}}}^{2}\left( \Omega ,{\mathbb{L}^{2}}\left( {{\mathbb{R}}^{n}} \right) \right)$ with ${{\mathcal{L}}_{{{\xi }_{0}}}}\in {{D}_{1}}\left( \tau -t \right)$,  $\eta>0$ is the same number as in  \eqref{c21} and $M_{4}$ is a positive constant independent of $\tau$ and $D_{1}$.
\end{lem}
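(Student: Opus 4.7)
The plan is to apply It\^o's formula to $\|\nabla v_2(t)\|^2$, exploiting that \eqref{d18} is a linear SPDE for $v_2$ driven by the external forcings $\beta u$ and $G_2$, and then close the resulting differential inequality using the $H^1$-type estimates on $u$ already proved in Lemmas~5.1 and~5.3. Formally taking the gradient of \eqref{d18} (justified by a standard Galerkin approximation, as in the earlier lemmas) and invoking It\^o's formula yields, with $M_t$ denoting the martingale part,
$$
d\|\nabla v_2\|^2 + 2\gamma \|\nabla v_2\|^2\, dt = 2\beta(\nabla u, \nabla v_2)\, dt + 2(\nabla G_2, \nabla v_2)\, dt + \|\nabla \delta(t,v_2)\|_{L_2(l^2, L^2(\mathbb{R}^n))}^2\, dt + dM_t.
$$
From the form of $\delta$ in \eqref{c+}, the diffusion coefficient for $\nabla v_2$ is $\sum_k (\nabla\theta_{2,k} + \delta_k \nabla v_2)\eta_k$, whose Hilbert--Schmidt norm squared is controlled by $2\|\nabla\theta_2(t)\|_{L^2(\mathbb{R}^n,l^2)}^2 + 2\|\delta\|_{l^2}^2 \|\nabla v_2\|^2$.

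Next, I use the structural assumption $2\|\delta\|_{l^2}^2 < \gamma$ imposed just after \eqref{c17} to secure a strictly positive dissipative coefficient $2\gamma - 2\|\delta\|_{l^2}^2$. The two cross terms $2\beta(\nabla u, \nabla v_2)$ and $2(\nabla G_2, \nabla v_2)$ are absorbed by Young's inequality into a small fraction of this dissipation, producing
$$
\frac{d}{dt}\mathbb{E}\|\nabla v_2\|^2 + \kappa\, \mathbb{E}\|\nabla v_2\|^2 \le C\Big(\mathbb{E}\|\nabla u(t)\|^2 + \|\nabla G_2(t)\|_{L^2(\mathbb{R}^n)}^2 + \|\nabla \theta_2(t)\|_{L^2(\mathbb{R}^n,l^2)}^2\Big),
$$
for some $\kappa > 0$ with $\kappa > \eta$ (which is guaranteed since $\eta$ may be chosen small as in \eqref{c21}). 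Multiplying by $e^{\eta(s-\tau)}$, integrating on $(\tau-t, \tau)$, and using that $v_2(\tau-t, \tau-t, 0) = 0$ so the initial contribution vanishes, I obtain
$$
\mathbb{E}\|\nabla v_2(\tau, \tau-t, 0)\|^2 \le C\int_{\tau-t}^{\tau} e^{\eta(s-\tau)}\Big(\mathbb{E}\|\nabla u(s)\|^2 + \|\nabla G_2(s)\|^2 + \|\nabla \theta_2(s)\|_{L^2(\mathbb{R}^n,l^2)}^2\Big)\, ds.
$$

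To finish, the $\mathbb{E}\|\nabla u\|^2$ integral is controlled by the $H^1$-bound of $u$ in Lemma~5.1, cf.\ \eqref{d2}; the $\|\nabla G_2\|^2$ term is absorbed into $M_4$ using $G_2 \in L^\infty(\mathbb{R}, H^1(\mathbb{R}^n))$ together with $\int_{-\infty}^{\tau} e^{\eta(s-\tau)} ds = \eta^{-1}$; and the $\|\nabla \theta_2\|^2$ contribution is handled by partitioning $(-\infty, \tau)$ into unit intervals and applying \eqref{c11}, whose contribution is dominated by a geometric series. Combining this $L^2$-bound for $\nabla v_2$ with the pointwise $L^2$-bound on $v_2$ itself (obtained from $v = v_1 + v_2$, Lemma~5.1, and \eqref{d19}) gives the full $H^1$-estimate claimed. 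The principal technical obstacle is the stochastic It\^o correction $\|\nabla \delta(t,v_2)\|^2$: it generates the term $2\|\delta\|_{l^2}^2\|\nabla v_2\|^2$ that must be strictly dominated by the dissipation $2\gamma\|\nabla v_2\|^2$, which is exactly the motivation for the quantitative hypothesis $2\|\delta\|_{l^2}^2 < \gamma$.
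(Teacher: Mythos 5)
Your proposal is correct and follows essentially the same route as the paper: It\^o's formula applied to $\|\nabla v_2\|^2$ for \eqref{d18}, the It\^o correction bounded by $2\|\nabla\theta_2\|^2_{L^2(\mathbb{R}^n,l^2)}+2\|\delta\|^2_{l^2}\|\nabla v_2\|^2$ so that $2\|\delta\|_{l^2}^2<\gamma$ yields dissipation, Young's inequality on the $\beta(\nabla u,\nabla v_2)$ and $(\nabla G_2,\nabla v_2)$ terms, an exponentially weighted Gronwall-type integration with $v_2(\tau-t)=0$, and then \eqref{c11}, the $H^1$ control of $u$, and \eqref{d19} to close the estimate. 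The only (harmless) deviations are that you invoke the time-integrated bound \eqref{d2} of Lemma \ref{lem4.1} for the $\nabla u$ term where the paper cites Lemma \ref{lem4.3}, and that you make the $L^2$ part of the $H^1$ norm of $v_2$ explicit, which the paper leaves implicit.
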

\begin{proof}
By \eqref{d18} and Ito's formula, we get that for all $\tau \in \mathbb{R}$ and $t\geq0$,
\begin{align}\label{d20}
	\begin{split}
	 & {{\left\| \nabla {{v}_{2}}\left( \tau ,\tau -t,0 \right)   \right\|}^{2}}+2\gamma \int_{\tau -t}^{\tau }{{{\left\| \nabla v_2\left( s ,\tau -t,0 \right)  \right\|}^{2}}ds} \\
	  =&2\beta \int_{\tau -t}^{\tau }{\left( \nabla {{v}_{2}}\left( s,\tau -s,0 \right),\nabla u\left( s,\tau -t,{{\xi }_{0}} \right) \right)ds}  +2\int_{\tau -t}^{\tau }{\left( \nabla {{G}_{2}}\left( s,x \right),\nabla v_2\left( s ,\tau -t,0 \right) \right)ds}\\
	  &+ \int_{\tau -t}^{t}{{{\left\| \nabla \left( {\delta}\left( s,{{v}_{2}}\left( s \right) \right) \right) \right\|}^{2}}}ds+2 \int_{\tau -t}^{t}{\left( \nabla \left( {\delta}\left( s,{{v}_{2}}\left( s \right) \right) \right),\nabla {{v}_{2}}\left( s,\tau -t,0 \right) \right)}dW\left( s \right).
		\end{split}
\end{align}
For the first term on the right-hand side of \eqref{d20}, we can get
\begin{align}\label{d21}
	\begin{split}
	& 2\beta  \int_{\tau -t}^{\tau }{\left( \nabla {{v}_{2}}\left( \tau ,\tau -s,0 \right),\nabla u\left( s,\tau -t,{{\xi }_{1}} \right) \right)ds} \\
	 \le& \frac{\gamma }{2}\int_{\tau -t}^{\tau }{{{\left\| \nabla {{v}_{2}}\left( \tau ,\tau -s,0 \right) \right\|}^{2}}ds}+\frac{2}{\gamma }{{\beta }^{2}}\int_{\tau -t}^{\tau }{{{\left\| \nabla u\left( s,\tau -t,{{\xi }_{1}} \right) \right\|}^{2}}ds} .
		\end{split}
\end{align}
For the second term on the right-hand side of \eqref{d20},  we can get
\begin{align}\label{d22}
	\begin{split}
	& 2\int_{\tau -t}^{\tau }{\left( \nabla {{G}_{2}}\left( s,x \right),\nabla v_2\left( s ,\tau -t,0 \right) \right)ds} \\
	 \le& \frac{\gamma }{2}\int_{\tau -t}^{\tau }{{{\left\| \nabla {{v}_{2}}\left( \tau ,\tau -s,0 \right) \right\|}^{2}}ds}+\frac{2}{\gamma }\int_{\tau -t}^{\tau }{{{\left\| \nabla {{G}_{2}}\left( s \right) \right\|}^{2}}ds}.
	\end{split}
\end{align}
For the third term on the right-hand side of \eqref{d20}, by \eqref{c17} we can get
\begin{align}\label{d23}
	\begin{split}
	  &  \int_{\tau -t}^{t}{{{\left\| \nabla \left( {\delta}\left( s,{{v}_{2}}\left( s \right) \right) \right) \right\|}^{2}}}ds
	  \le 2\int_{\tau -t}^{\tau }{\left\| \nabla {{\theta }_{2}}\left( s \right) \right\|_{{{L}^{2}}\left( {{\mathbb{R}}^{n}},{{l}^{2}}
 \right)}^{2}}ds+2{\|\delta\|^2_{l^2}\int_{\tau -t}^{\tau}{{{\left\| \nabla {{v}_{2}}\left( s \right) \right\|}^{2}}}ds}.
	\end{split}
\end{align}
It follows from \eqref{d20}-\eqref{d23} we can get
\begin{align}\label{d24}
	\begin{split}
	 & \mathbb{E}\left( {{\left\| \nabla {{v}_{2}}\left( \tau ,\tau -t,0 \right)   \right\|}^{2}} \right)+\left( \gamma -2\|\delta\|^2_{l^2} \right)\int_{\tau -t}^{\tau }{\mathbb{E}\left( {{\left\| \nabla {{v}_{2}}\left( \tau ,\tau -t,0 \right)   \right\|}^{2}} \right)ds} \\
	 \le & \frac{2}{\gamma }{{\beta }^{2}}\int_{\tau -t}^{\tau }{\mathbb{E}\left( {{\left\| \nabla u\left( s,\tau -t,{{\xi }_{1}} \right) \right\|}^{2}} \right)ds}
+\frac{2}{\gamma }\int_{\tau -t}^{\tau }{{{\left\| \nabla {{G}_{2}}\left( s \right) \right\|}^{2}}ds}
+2\int_{\tau -t}^{\tau }{\left\| \nabla {{\theta }_{2}}\left( s \right) \right\|_{{{L}^{2}}\left( {{\mathbb{R}}^{n}},{{l}^{2}} \right)}^{2}}ds.
\end{split}
\end{align}
By Gronwall inequality, we can get
\begin{align*}
  & \mathbb{E}\left( {{\left\| \nabla {{v}_{2}}\left( \tau ,\tau -t,0 \right)   \right\|}^{2}} \right)\le \frac{2}{\gamma }{{\beta }^{2}}\int_{\tau -t}^{\tau }{{{e}^{\left( \gamma -2\|\delta\|^2_{l^2} \right)\left( s-\tau  \right)}}\mathbb{E}\left( {{\left\| \nabla u\left( s,\tau -t,{{\xi }_{0}} \right) \right\|}^{2}} \right)ds} \\
 & +\frac{2}{\gamma }\int_{-\infty}^{\tau }{{{e}^{\left( \gamma -2\|\delta\|^2_{l^2}\right)\left( s-\tau  \right)}}{{\left\| \nabla {{G}_{2}}\left( s \right) \right\|}^{2}}ds}+2\int_{-\infty }^{\tau }{{{e}^{\left( \gamma -2\|\delta\|^2_{l^2}\right)\left( s-\tau  \right)}}\left\| \nabla {{\theta }_{2}}\left( s \right) \right\|_{{{L}^{2}}\left( {{\mathbb{R}}^{n}},{{l}^{2}} \right)}^{2}}ds,
\end{align*}
which together with \eqref{c11} and Lemma 5.3 completes the proof.
\end{proof}
\begin{lem}\label{lem4.5}
	Suppose assumption  $\mathbf{(H_{1})-({H_{4}})}$, \eqref{c21} and \eqref{c24} hold, then for every $\tau\in \mathbb{R}$ and ${{D}_{1}}=\left\{ {{D}_{1}}\left( t \right):t\in \mathbb{R} \right\}\in {{\mathcal{D}}_{0}}$, there exists $T=T(\tau,D_{1})>0$ such that for all $t\geq T$, the solution  $k=(u,v)$ of  \eqref{c18}-\eqref{c19} satisfies
	\begin{align*}
		  \mathbb{E}\left( \int_{\left| x \right|\ge \sqrt{2}n}\left( {{{\left\vert u\left( \tau ,\tau -t,{{\xi }_{0}} \right)\left( x \right) \right\vert}^{2}}+{{\left\vert v\left( \tau ,\tau -t,{{\xi }_{0}} \right)\left( x \right) \right\vert}^{2}}}\right) dx \right)<\varepsilon   ,
	\end{align*}
	where $\xi _{0}\in L_{{{\mathcal{F}}_{\tau -t}}}^{2}\left( \Omega ,{\mathbb{L}^{2}}\left( {{\mathbb{R}}^{n}} \right) \right)$, with ${{\mathcal{L}}_{{{\xi }_{0}}}}\in {{D}_{1}}\left( \tau -t \right)$.
\end{lem}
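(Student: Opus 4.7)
The plan is to use a smooth spatial cutoff that isolates the tail. Fix $\rho\in C^{\infty}(\mathbb{R}^{+};[0,1])$ with $\rho(s)=0$ on $[0,1]$ and $\rho(s)=1$ on $[2,\infty)$, and set $\rho_{n}(x)=\rho(|x|^{2}/n^{2})$, so that $\rho_{n}\equiv 1$ for $|x|\ge\sqrt{2}n$, $\rho_{n}\equiv 0$ for $|x|\le n$, and $|\nabla\rho_{n}|\le C/n$. Define
\[
\Phi_{n}(t)=\beta\int_{\mathbb{R}^{n}}\rho_{n}(x)|u(t,x)|^{2}dx+\alpha\int_{\mathbb{R}^{n}}\rho_{n}(x)|v(t,x)|^{2}dx.
\]
We apply It\^o's formula to $\Phi_{n}$ using \eqref{m1}--\eqref{m2} and then reproduce the energy computation of Lemma \ref{lem4.1} with $\rho_{n}$ inserted, so that every source integrand is localised to the region $\{|x|\ge n\}$.

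When the differential is expanded, the cross-coupling $-2\alpha\beta\int\rho_{n}uv$ coming from the $\alpha v$ drift cancels the $+2\alpha\beta\int\rho_{n}uv$ coming from the $-\beta u$ drift. The Laplacian contributes $-2\beta\int\rho_{n}|\nabla u|^{2}$ together with a commutator $-2\beta\int u\nabla u\cdot\nabla\rho_{n}$ bounded by $(C/n)(\|u\|^{2}+\|\nabla u\|^{2})$, which is harmless after time integration thanks to Lemma \ref{lem4.1}. Assumptions \eqref{c2} and \eqref{c7}--\eqref{c8} bound the $f$ and $G_{1}$ contributions by integrals of $\rho_{n}\phi_{1}$, $\rho_{n}\psi_{1}$, $\rho_{n}\phi_{7}$, $\rho_{n}\psi_{g}$ and $\rho_{n}|\phi_{g}|^{2}$; since $\phi_{1},\psi_{1},\phi_{7},\psi_{g}\in L^{1}(\mathbb{R}^{n})$ and $\phi_{g}(s,\cdot)\in L^{2}(\mathbb{R}^{n})$, each such tail integral can be made uniformly small in $n$. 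The $G_{2}$ term is handled the same way using $G_{2}\in L^{\infty}(\mathbb{R};L^{2})$. For the $\sigma$-noise, \eqref{c16} produces $\int\rho_{n}|\theta_{1}|_{\ell^{2}}^{2}$, $(\int\rho_{n}|w|^{2})\|\beta_{1}\|_{\ell^{2}}^{2}(1+\mu(\|\cdot\|^{2}))$ and $\|w\|_{L^{\infty}}^{2}\|\gamma_{1}\|_{\ell^{2}}^{2}\int\rho_{n}|u|^{2}$; the first two vanish as $n\to\infty$ because $\theta_{1}(s)\in L^{2}(\mathbb{R}^{n};\ell^{2})$ and $w\in L^{2}$, while the third is absorbed into $\Phi_{n}$. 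The $\delta$-noise produces $\int\rho_{n}|\theta_{2}|_{\ell^{2}}^{2}+2\|\delta\|_{\ell^{2}}^{2}\int\rho_{n}v^{2}$, and the constraint $2\|\delta\|_{\ell^{2}}^{2}<\gamma$ from \eqref{c17} allows us to absorb the $v^{2}$ term into the dissipation $2\alpha\gamma\int\rho_{n}v^{2}$. After taking expectation the stochastic integrals vanish.

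Collecting the estimates yields a differential inequality of the form
\[
\frac{d}{dt}\mathbb{E}\Phi_{n}(t)+\eta\,\mathbb{E}\Phi_{n}(t)\le\frac{C}{n}\bigl(1+\mathbb{E}\|u(t)\|_{H^{1}}^{2}\bigr)+R_{n}(t),
\]
where $R_{n}(t)$ collects all localised data contributions and satisfies $\int_{-\infty}^{\tau}e^{\eta(s-\tau)}R_{n}(s)ds\to 0$ as $n\to\infty$ by dominated convergence combined with \eqref{c24}. Multiplying by $e^{\eta t}$, integrating over $(\tau-t,\tau)$, using $\mathcal{L}_{\xi_{0}}\in D_{1}(\tau-t)\in\mathcal{D}_{0}$ to kill the initial term via $e^{-\eta t}\mathbb{E}\Phi_{n}(\tau-t)\le e^{-\eta t}\|D_{1}(\tau-t)\|_{\mathcal{P}_{2}(\mathbb{L}^{2})}^{2}$ once $t\ge T(\tau,D_{1})$, and invoking Lemma \ref{lem4.1} to dominate the $\|u\|_{H^{1}}^{2}$-integral uniformly in $n$, one obtains $\mathbb{E}\Phi_{n}(\tau)\to 0$ as $n\to\infty$. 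Since $\rho_{n}\equiv 1$ on $\{|x|\ge\sqrt{2}n\}$, the lemma follows. The main obstacle is ensuring that the distribution-dependent contributions (the factors $\mu(\|\cdot\|^{2})$ in $f$, $G_{1}$ and $\sigma$) do not jeopardise the tail smallness; because $\mu(\|\cdot\|^{2})=\mathbb{E}\|u(t)\|^{2}$ is uniformly bounded by Lemma \ref{lem4.1}, these factors only multiply tail integrals of $L^{1}$ or $L^{2}$ data and therefore remain negligible, but this has to be checked term by term.
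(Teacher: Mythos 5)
Your proposal follows essentially the same route as the paper's proof: the same cutoff $\rho_n$, a weighted It\^o/energy estimate for $\beta\|\rho_n u\|^2+\alpha\|\rho_n v\|^2$ in which the coupling terms cancel, absorption of the localized lower-order terms by the dissipation under \eqref{c21}, tail-smallness of the data $\phi_1,\psi_1,\psi_g,\phi_g,\theta_1,\theta_2,G_2,w$ for large $n$, and Lemma \ref{lem4.1} to control the commutator term $Cn^{-1}\int e^{\eta(s-\tau)}\mathbb{E}\|u\|^2_{H^1}ds$ and the distribution-dependent factors $\mathbb{E}\|u(s)\|^2$. The only technicality you pass over is the justification that the expectations of the stochastic integrals vanish, which the paper handles by the stopping times $\tau_m$ and Fatou's lemma; otherwise the argument matches.
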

\begin{proof}
Let $\rho $ : ${{\mathbb{R}}^{n}}\to [0,1]$ be a cut-off smooth function such that for any
$s\in {{\mathbb{R}}^{+}}$,
\begin{align}\label{d25}
	\begin{split}
\rho \left( s \right)=\left\{ \begin{array}{*{35}{l}}
	0, & 0\le s\le 1,  \\
	0<\rho \left( s \right)<1, & 1<s<2,  \\
	1, & s\ge 2.  \\
\end{array} \right.
\end{split}
\end{align}
Let ${{c}_{4}}=\underset{s\in {{\mathbb{R}}^{+}}}{\mathop{\sup }}\,\left\vert \rho '\left( s \right) \right\vert$, $n$ be a fixed integer and ${{\rho }_{n}}=\rho \left( \frac{{{\left\vert x \right\vert}^{2}}}{{{n}^{2}}} \right)$.
By \eqref{m3} we have for all $t\geq\tau$,
\begin{align}\label{d26}
	\begin{split}
	 & {{e}^{\eta t}}\left( \beta  {{\left\| {{\rho }_{n}}u\left( t \right) \right\|}^{2}}+\alpha {{\left\| {{\rho }_{n}}v\left( t \right) \right\|}^{2}} \right)+2\beta  \int_{\tau }^{t}{{{e}^{\eta s}}\left( \nabla \left( \rho _{n}^{2}u\left( s \right) \right),\nabla u\left( s \right) \right)}ds \\
	 & +2\beta \lambda \int_{\tau }^{t}{{{e}^{\eta s}}{{\left\| {{\rho }_{n}}u\left( s \right) \right\|}^{2}}}ds-\eta \int_{\tau }^{t}{{{e}^{\eta s}}\left( \beta  {{\left\| {{\rho }_{n}}u\left( t \right) \right\|}^{2}}+\alpha {{\left\| {{\rho }_{n}}v\left( t \right) \right\|}^{2}} \right)}ds \\
	 & +2\alpha \gamma  \int_{\tau }^{t}{{{e}^{\eta s}}{{\left\| {{\rho }_{n}}v\left( s \right) \right\|}^{2}}}ds+2\beta  \int_{\tau }^{t}{{{e}^{\eta s}}\left( f\left( s,x,u\left( s \right),{{\mathcal{L}}_{u\left( s \right)}} \right),\rho _{n}^{2}u\left( s \right) \right)}ds \\
	  =&{{e}^{\eta t}}\left( \beta {{\left\| {{\rho }_{n}}{{\xi }_{1}} \right\|}^{2}}+\alpha {{\left\| {{\rho }_{n}}{{\xi }_{2}} \right\|}^{2}} \right)+2\beta  \int_{\tau }^{t}{{{e}^{\eta s}}\left( {{G}_{1}}\left( s,x,u\left( s \right),{{\mathcal{L}}_{u\left( s \right)}} \right),\rho _{n}^{2}u\left( s \right) \right)}ds \\
	 & +2\alpha \int_{\tau }^{t}{{{e}^{\eta s}}\left( {{G}_{2}}\left( s,x \right),\rho _{n}^{2}v\left( s \right) \right)}ds+\alpha  \int_{\tau }^{t}{{{e}^{\eta s}}\left\| {{\rho }_{n}} {\delta}\left( s,v\left( s \right) \right)  \right\|_{{{L}_{2}}\left( {{l}^{2}},{{L}^{2}}\left( {{\mathbb{R}}^{n}} \right) \right)}^{2}}ds \\
	 & +\beta  \int_{\tau }^{t}{{{e}^{\eta s}}\left\| {{\rho }_{n}}{{\sigma }}\left( s,u\left( s \right),{{\mathcal{L}}_{u\left( s \right)}} \right) \right\|_{{{L}_{2}}\left( {{l}^{2}},{{L}^{2}}\left( {{\mathbb{R}}^{n}} \right) \right)}^{2}}ds+2\alpha  \int_{\tau }^{t}{{{e}^{\eta s}}\left( {\delta}\left( s,v\left( s \right) \right),\rho _{n}^{2}v\left( s \right) \right)}dW\left( s \right) \\
	 & +2\beta  \int_{\tau }^{t}{{{e}^{\eta s}}\left( {{\sigma }}\left( s,u\left( s \right),{{\mathcal{L}}_{u\left( s \right)}} \right),\rho _{n}^{2}u\left( s \right) \right)}dW\left( s \right),
\end{split}
\end{align}
$\mathbb{P}$-almost surely. Given $m\in \mathbb{N}$, denote by
\[{{\tau }_{m}}=\inf \{t\ge \tau :\left\| k\left( t \right) \right\|\ge m\}.\]
By \eqref{d26} we get for all $t\ge \tau $,
\begin{align}\label{d27}
	\begin{split}
   & \mathbb{E}\left( {{e}^{\eta \left( t\wedge {{\tau }_{m}} \right)}}\left( \beta  {{\left\| {{\rho }_{n}}u\left( {t\wedge {{\tau }_{m}}} \right) \right\|}^{2}}+\alpha {{\left\| {{\rho }_{n}}v\left( {t\wedge {{\tau }_{m}}} \right) \right\|}^{2}} \right) \right) +2\beta  \mathbb{E}\left( \int_{\tau }^{t\wedge {{\tau }_{m}}}{{{e}^{\eta s}}\left( \nabla \left( \rho _{n}^{2}u\left( s \right) \right),\nabla u\left( s \right) \right)} \right)ds \\
  \le&  \mathbb{E}\left( {{e}^{\eta \tau}}\left( \beta {{\left\| {{\rho }_{n}}{{\xi }_{1}} \right\|}^{2}}+\alpha {{\left\| {{\rho }_{n}}{{\xi }_{2}} \right\|}^{2}} \right) \right)+\left( \eta -2\lambda  \right)\int_{\tau }^{t}{{{e}^{\eta s}}\left( \beta {{\left\| {{\rho }_{n}}u\left( s \right) \right\|}^{2}}+\alpha {{\left\| {{\rho }_{n}}v\left( s \right) \right\|}^{2}} \right)}ds \\
  & -2\beta  \mathbb{E}\left( \int_{\tau }^{t\wedge {{\tau }_{m}}}{{{e}^{\eta s}}\left( f\left( s,u\left( s \right),{{\mathcal{L}}_{u\left( s \right)}} \right),\rho _{n}^{2}u\left( s \right) \right)}ds \right)  \\
  & +2\beta  \mathbb{E}\left( \int_{\tau }^{t\wedge {{\tau }_{m}}}{{{e}^{\eta s}}\left( {{G}_{1}}\left( s,u\left( s \right),{{\mathcal{L}}_{u\left( s \right)}} \right),\rho _{n}^{2}u\left( s \right) \right)}ds \right) \\
  & +2\alpha  \mathbb{E}\left( \int_{\tau }^{t\wedge {{\tau }_{m}}}{{{e}^{\eta s}}\left( {{G}_{2}}\left( s \right),\rho _{n}^{2}v\left( s \right) \right)}ds \right) +\alpha  \mathbb{E}\left( \int_{\tau }^{t\wedge {{\tau }_{m}}}{{{e}^{\eta s}}\left\| {{\rho }_{n}}{\delta}\left( s,v\left( s \right) \right) \right\|_{{{L}_{2}}\left( {{l}^{2}},{{L}^{2}}\left( {{\mathbb{R}}^{n}} \right) \right)}^{2}}ds \right) \\
  & +\beta  \mathbb{E}\left( \int_{\tau }^{t\wedge {{\tau }_{m}}}{{{e}^{\eta s}}\left\| {{\rho }_{n}}{{\sigma }}\left( s,u\left( s \right),{{\mathcal{L}}_{u\left( s \right)}} \right) \right\|_{{{L}_{2}}\left( {{l}^{2}},{{L}^{2}}\left( {{\mathbb{R}}^{n}} \right) \right)}^{2}}ds \right).
\end{split}
\end{align}
Note that
\begin{align}\label{d28}
	\begin{split}
& -2\beta \mathbb{E}\left( \int_{\tau }^{t\wedge {{\tau }_{m}}}{{{e}^{\eta s}}\left( \nabla \left( \rho _{n}^{2}u\left( s \right) \right),\nabla u\left( s \right) \right)ds} \right)
 =-2\beta \mathbb{E}\left( \int_{\tau }^{t\wedge {{\tau }_{m}}}{{{e}^{\eta s}}\int_{{{\mathbb{R}}^{n}}}{\rho _{n}^{2}{{\left\vert \nabla u\left( s \right) \right\vert}^{2}}}dxds} \right) \\
 & -2\beta \mathbb{E}\left( \int_{\tau }^{t\wedge {{\tau }_{m}}}{{{e}^{\eta s}}\int_{{{\mathbb{R}}^{n}}}{2{{n}^{-1}}u\left( s,x \right){{\rho }_{n}}\left( x \right)\nabla \rho \left( \frac{x}{n} \right)}\nabla u\left( s,x \right)dxds} \right) \\
 \le& 4{{n}^{-1}}{{\left\| \nabla \rho  \right\|}_{{{L}^{\infty }}\left( {{\mathbb{R}}^{n}} \right)}}\mathbb{E}\left( \int_{\tau }^{t\wedge {{\tau }_{m}}}{{{e}^{\eta s}}\left\| u\left( s \right) \right\|\left\| \nabla u\left( s \right) \right\|ds} \right) \\
 \le&2{{n}^{-1}}{{\left\| \nabla \rho  \right\|}_{{{L}^{\infty }}\left( {{\mathbb{R}}^{n}} \right)}}\mathbb{E}\left( \int_{\tau }^{t}{{{e}^{\eta s}}\left\| u\left( s \right) \right\|_{{{H}^{1}}\left( {{\mathbb{R}}^{n}} \right)}^{2}ds} \right).
\end{split}
\end{align}
For the third term on the left-hand side of \eqref{d27}, by \eqref{c2} we can get
\begin{align}\label{d29}
	\begin{split}
	& -2\beta \mathbb{E}\left( \int_{\tau }^{t\wedge {{\tau }_{m}}}{{{e}^{\eta s}}\left( f\left( s,x,u\left( s,x \right),{{\mathcal{L}}_{u\left( s \right)}} \right),\rho _{n}^{2}u\left( s \right) \right)}ds \right) \\
	 \le& -2\beta {{\alpha }_{1}}\mathbb{E}\left( \int_{\tau }^{t\wedge {{\tau }_{m}}}{{{e}^{\eta s}}\int_{{{\mathbb{R}}^{n}}}{\rho _{n}^{2}}{{\left\vert u\left( s \right) \right\vert}^{p}}dx}ds \right)+2\beta \mathbb{E}\left( \int_{\tau }^{t\wedge {{\tau }_{m}}}{{{e}^{\eta s}}\mathbb{E}\left( {{\left\| u\left( s \right) \right\|}^{2}} \right)\int_{{{\mathbb{R}}^{n}}}{\rho _{n}^{2}}{{\psi }_{1}}\left( x \right)dx}ds \right) \\
	& +2\beta \mathbb{E}\left( {{\int_{\tau }^{t\wedge {{\tau }_{m}}}{{{e}^{\eta s}}\int_{{{\mathbb{R}}^{n}}}{{{\left\| {{\phi }_{1}}\left( s \right) \right\|}_{{{L}^{\infty }}\left( {{\mathbb{R}}^{n}} \right)}}}\left\| {{\rho }_{n}}u\left( s \right) \right\|^{2}}}}ds \right)+2\beta \mathbb{E}\left( \int_{\tau }^{t\wedge {{\tau }_{m}}}{{{e}^{\eta s}}\int_{{{\mathbb{R}}^{n}}}{\rho _{n}^{2}\left( x \right)}\left\vert {{\phi }_{1}}\left( s,x \right) \right\vert dx}ds \right) \\
	 \le& 2\beta \int_{\tau }^{t}{{{e}^{\eta s}}\int_{{{\mathbb{R}}^{n}}}{{{\left\| {{\phi }_{1}}\left( s \right) \right\|}_{{{L}^{\infty }}\left( {{\mathbb{R}}^{n}} \right)}}}\mathbb{E}\left( {{\left\| {{\rho }_{n}}u\left( s \right) \right\|}^{2}} \right)}ds+2\beta \mathbb{E}\left( \int_{\tau }^{t}{{{e}^{\eta s}}\int_{{{\mathbb{R}}^{n}}}{\rho _{n}^{2}\left( x \right)}\left\vert {{\phi }_{1}}\left( s,x \right) \right\vert dx}ds \right) \\
	& +2\beta \mathbb{E}\left( \int_{{{\mathbb{R}}^{n}}}{\rho _{n}^{2}\left( x \right)}{{\psi }_{1}}\left( x \right)dx\int_{\tau }^{t}{{{e}^{\eta s}}\mathbb{E}\left( {{\left\| u\left( s \right) \right\|}^{2}} \right)}ds \right)-2\beta {{\alpha }_{1}}\mathbb{E}\left( \int_{\tau }^{t\wedge {{\tau }_{m}}}{{{e}^{\eta s}}\int_{{{\mathbb{R}}^{n}}}{\rho _{n}^{2}}{{\left\vert u\left( s \right) \right\vert }^{p}}dx}ds \right).
\end{split}
\end{align}
For the fourth  term on the right-hand side of \eqref{d27}, by \eqref{c7} and Young's inequality we can get
\begin{align}\label{d30}
	\begin{split}
	 & 2\beta \mathbb{E}\left( \int_{\tau }^{t\wedge {{\tau }_{m}}}{{{e}^{\eta s}}\left( {{G}_{1}}\left( s,x,u\left( s,x \right),{{\mathcal{L}}_{u\left( s \right)}} \right),\rho _{n}^{2}u\left( s \right) \right)}ds \right) \\
	  \le& 2\beta \mathbb{E}\left( \int_{\tau }^{t\wedge {{\tau }_{m}}}{{{e}^{\eta s}}\int_{{{\mathbb{R}}^{n}}}{\rho _{n}^{2}\left( x \right){{\phi }_{g}}\left( s,x \right)\left\vert u\left( s,x \right) \right\vert dxds}} \right) \\
	 & +2\beta \mathbb{E}\left( \int_{\tau }^{t\wedge {{\tau }_{m}}}{{{e}^{\eta s}}\int_{{{\mathbb{R}}^{n}}}{\rho _{n}^{2}\left( x \right)\left( {{\phi }_{7}}\left( s,x \right){{\left\vert u\left( s \right) \right\vert}^{2}}+{{\psi }_{g}}\left( x \right)\left\vert u\left( s \right) \right\vert\sqrt{\mathbb{E}\left( {{\left\| u\left( s \right) \right\|}^{2}} \right)} \right)dxds}} \right) \\
	  \le& \beta \mathbb{E}\left( \int_{\tau }^{t\wedge {{\tau }_{m}}}{{{e}^{\eta s}}\int_{{{\mathbb{R}}^{n}}}{\rho _{n}^{2}\left( x \right)\left( \eta {{\left\vert u\left( s,x \right) \right\vert}^{2}}+{{\eta }^{-1}}{{\left\vert {{\phi }_{g}}\left( s,x \right) \right\vert}^{2}} \right)dxds}} \right) \\
	 & +\beta \mathbb{E}\left( \int_{\tau }^{t\wedge {{\tau }_{m}}}{{{e}^{\eta s}}\int_{{{\mathbb{R}}^{n}}}{\rho _{n}^{2}\left( x \right)\left( \left( 2{{\phi }_{7}}\left( s,x \right)+{{\psi }_{g}}\left( x \right) \right){{\left\vert u\left( s \right) \right\vert}^{2}}+\left\vert {{\psi }_{g}}\left( x \right) \right\vert\mathbb{E}\left( {{\left\| u\left( s \right) \right\|}^{2}} \right) \right)dxds}} \right) \\
	  \le& \beta {{\eta }^{-1}}\int_{\tau }^{t}{{{e}^{\eta s}}{{\left\| {{\rho }_{n}}{{\phi }_{g}}\left( s \right) \right\|}^{2}}}ds+\beta {{\left\| \rho _{n}^{2}{{\psi }_{g}} \right\|}_{_{{{L}^{1}}\left( {{\mathbb{R}}^{n}} \right)}}}\int_{\tau }^{t}{{{e}^{\eta s}}}\mathbb{E}\left( {{\left\| u\left( s \right) \right\|}^{2}} \right)ds \\
	 & +\beta \int_{\tau }^{t}{{{e}^{\eta s}}\left( \eta +2{{\left\| {{\phi }_{7}}\left( s \right) \right\|}_{{{L}^{\infty }}\left( {{\mathbb{R}}^{n}} \right)}}+{{\left\| {{\psi }_{g}} \right\|}_{{{L}^{\infty }}\left( {{\mathbb{R}}^{n}} \right)}} \right)}\mathbb{E}\left( {{\left\| {{\rho }_{n}}u\left( s \right) \right\|}^{2}} \right)ds.
\end{split}
\end{align}
For the fifth  term on the right-hand side of \eqref{d27},  we can get
\begin{align}\label{d31}
	\begin{split}
		 & 2\alpha  \mathbb{E}\left( \int_{\tau }^{t\wedge {{\tau }_{m}}}{{{e}^{\eta s}}\left( {{G}_{2}}\left( s \right),\rho _{n}^{2}v\left( s \right) \right)}ds \right) \\
		  \le& \frac{1}{2}\alpha {{\eta }}\mathbb{E}\left( \int_{\tau }^{t}{{{e}^{\eta s}}{{\left\| {{\rho }_{n}}v\left( s \right) \right\|}^{2}}}ds \right)+2\alpha {{\eta }^{-1}}\mathbb{E}\left( \int_{\tau }^{t}{{{e}^{\eta s}}{{\left\| {{G}_{2}}\left( s \right) \right\|}^{2}}}ds \right).
	\end{split}
\end{align}
For the last two terms on the right-hand side of \eqref{d27}, by  \eqref{c15} and \eqref{c+} we can get
\begin{align}\label{d32}
	\begin{split}
	 & \beta \mathbb{E}\left( \int_{\tau }^{t\wedge {{\tau }_{m}}}{{{e}^{\eta s}}\left\| {{\rho }_{n}}{{\sigma }}\left( s,u\left( s \right),{{\mathcal{L}}_{u\left( s \right)}} \right) \right\|_{{{L}_{2}}\left( {{l}^{2}},{{L}^{2}}\left( {{\mathbb{R}}^{n}} \right) \right)}^{2}}ds \right) +\alpha  \mathbb{E}\left( \int_{\tau }^{t\wedge {{\tau }_{m}}}{{{e}^{\eta s}}\left\| {{\rho }_{n}}{\delta}\left( s,v\left( s \right) \right) \right\|_{{{L}_{2}}\left( {{l}^{2}},{{L}^{2}}\left( {{\mathbb{R}}^{n}} \right) \right)}^{2}}ds \right) \\ 	
	 \le &2\beta \mathbb{E}\left( \int_{\tau }^{t\wedge {{\tau }_{m}}}{{{e}^{\eta s}}\left\| {{\rho }_{n}}{{\theta }_{1}}\left( s \right) \right\|_{{{L}^{2}}\left( {{\mathbb{R}}^{n}},{{l}^{2}} \right)}^{2}}ds \right)+4\beta \left\| w \right\|_{{{L}^{\infty }}\left( {{\mathbb{R}}^{n}} \right)}^{2}\left\| {{\gamma }_{1}} \right\|_{{{l}^{2}}}^{2}\mathbb{E}\left( \int_{\tau }^{t\wedge {{\tau }_{m}}}{{{e}^{\eta s}}{{\left\| {{\rho }_{n}}u\left( s \right) \right\|}^{2}}}ds \right) \\
	& +8\beta {{\left\| {{\rho }_{n}}w \right\|}^{2}}\left\| {{\beta }_{1}} \right\|_{{{l}^{2}}}^{2}\mathbb{E}\left( \int_{\tau }^{t\wedge {{\tau }_{m}}}{{{e}^{\eta s}}\left( 1+\mathbb{E}\left( {{\left\| u\left( s \right) \right\|}^{2}} \right) \right)}ds \right) \\
	& +2\alpha \int_{\tau }^{t}{{{e}^{\eta s}}\left\| {{\rho }_{n}}{{\theta }_{2}}\left( s \right) \right\|}_{{{L}^{2}}\left( {{\mathbb{R}}^{n}},{{l}^{2}} \right)}^{2}ds+2\alpha \|\delta\|^2_{l^2}\int_{\tau }^{t}{{{e}^{\eta s}}{{\left\| {{\rho }_{n}}v\left( s \right) \right\|}^{2}}ds} \\
	 \le& 2\beta \int_{\tau }^{t}{{{e}^{\eta s}}\left\| {{\rho }_{n}}{{\theta }_{1}}\left( s \right) \right\|_{{{L}^{2}}\left( {{\mathbb{R}}^{n}},{{l}^{2}} \right)}^{2}}ds+4\beta \left\| w \right\|_{{{L}^{\infty }}\left( {{\mathbb{R}}^{n}} \right)}^{2}\left\| {{\gamma }_{1}} \right\|_{{{l}^{2}}}^{2}\int_{\tau }^{t\wedge {{\tau }_{m}}}{{{e}^{\eta s}}\mathbb{E}\left( {{\left\| {{\rho }_{n}}u\left( s \right) \right\|}^{2}} \right)}ds \\
	& +8\beta {{\left\| {{\rho }_{n}}w \right\|}^{2}}\left\| {{\beta }_{1}} \right\|_{{{l}^{2}}}^{2}{{\eta }^{-1}}{{e}^{\eta t}}+8\beta {{\left\| {{\rho }_{n}}w \right\|}^{2}}\left\| {{\beta }_{1}} \right\|_{{{l}^{2}}}^{2}\int_{\tau }^{t}{{{e}^{\eta s}}\mathbb{E}\left( {{\left\| u\left( s \right) \right\|}^{2}} \right)}ds \\
	& +2\alpha \int_{\tau }^{t}{{{e}^{\eta s}}\left\| {{\rho }_{n}}{{\theta }_{2}}\left( s \right) \right\|}_{{{L}^{2}}\left( {{\mathbb{R}}^{n}},{{l}^{2}} \right)}^{2}ds+2\alpha \|\delta\|^2_{l^2}\int_{\tau }^{t}{{{e}^{\eta s}}{{\left\| {{\rho }_{n}}v\left( s \right) \right\|}^{2}}ds}.
\end{split}
\end{align}
It follows from \eqref{d27}-\eqref{d32} that for all $t\ge \tau $,
\begin{align}\label{d33}
	\begin{split}
	& \mathbb{E}\left( {{e}^{\eta \left( t\wedge {{\tau }_{m}} \right)}}\left( \beta {{\left\| {{\rho }_{n}}u\left( t \right) \right\|}^{2}}+\alpha {{\left\| {{\rho }_{n}}v\left( t \right) \right\|}^{2}} \right) \right) \\
	 \le& \mathbb{E}\left( {{e}^{\eta \left( t\wedge {{\tau }_{m}} \right)}}\left( \beta  {{\left\| {{\rho }_{n}}{{\xi }_{1}} \right\|}^{2}}+\alpha {{\left\| {{\rho }_{n}}{{\xi }_{2}} \right\|}^{2}} \right) \right)+2\alpha {{\eta }^{-1}}\mathbb{E}\left( \int_{\tau }^{t}{{{e}^{\eta s}}{{\left\| {{G}_{2}}\left( s \right) \right\|}^{2}}}ds \right) \\
	& +2{{n}^{-1}}{{\left\| \nabla \rho  \right\|}_{{{L}^{\infty }}\left( {{\mathbb{R}}^{n}} \right)}}\mathbb{E}\left( \int_{\tau }^{t}{{{e}^{\eta s}}\left\| u\left( s \right) \right\|_{{{H}^{1}}\left( {{\mathbb{R}}^{n}} \right)}^{2}ds} \right)+2\beta \int_{\tau }^{t}{{{e}^{\eta s}}\left\| {{\rho }_{n}}{{\phi }_{1}}\left( s \right) \right\|_{{{L}^{1}}\left( {{\mathbb{R}}^{n}} \right)}^{2}}ds \\
	& +2\beta \int_{\tau }^{t}{{{e}^{\eta s}}\left\| {{\rho }_{n}}{{\theta }_{1}}\left( s \right) \right\|_{{{L}^{2}}\left( {{\mathbb{R}}^{n}},{{l}^{2}} \right)}^{2}}ds+2\alpha \int_{\tau }^{t}{{{e}^{\eta s}}\left\| {{\rho }_{n}}{{\theta }_{2}}\left( s \right) \right\|}_{{{L}^{2}}\left( {{\mathbb{R}}^{n}},{{l}^{2}} \right)}^{2}ds \\
	& +8\beta {{\left\| {{\rho }_{n}}w \right\|}^{2}}\left\| {{\beta }_{1}} \right\|_{{{l}^{2}}}^{2}{{\eta }^{-1}}{{e}^{\eta t}}+\beta {{\eta }^{-1}}\int_{\tau }^{t}{{{e}^{\eta s}}{{\left\| {{\rho }_{n}}{{\phi }_{g}}\left( s \right) \right\|}^{2}}}ds \\
	& +\int_{\tau }^{t}{{{e}^{\eta s}}\left( 2\eta -2\lambda +2{{\left\| {{\phi }_{7}}\left( s \right) \right\|}_{{{L}^{\infty }}\left( {{\mathbb{R}}^{n}} \right)}}+{{\left\| {{\psi }_{g}} \right\|}_{{{L}^{\infty }}\left( {{\mathbb{R}}^{n}} \right)}} \right)}\mathbb{E}\left( \beta {{\left\| {{\rho }_{n}}u\left( s \right) \right\|}^{2}}+\alpha {{\left\| {{\rho }_{n}}v\left( s \right) \right\|}^{2}} \right)ds \\
	& +2\int_{\tau }^{t}{{{e}^{\eta s}}\left( 2\left\| w \right\|_{{{L}^{\infty }}\left( {{\mathbb{R}}^{n}} \right)}^{2}\left\| {{\gamma }_{1}} \right\|_{{{l}^{2}}}^{2}+{{\left\| {{\phi }_{1}}\left( s \right) \right\|}_{{{L}^{\infty }}\left( {{\mathbb{R}}^{n}} \right)}}+\|\delta\|^2_{l^2} \right)}\mathbb{E}\left( \beta {{\left\| {{\rho }_{n}}u\left( s \right) \right\|}^{2}}+\alpha {{\left\| {{\rho }_{n}}v\left( s \right) \right\|}^{2}} \right)ds \\
	& +\left( 8{{\left\| {{\rho }_{n}}w \right\|}^{2}}\left\| {{\beta }_{1}} \right\|_{{{l}^{2}}}^{2}+2{{\left\| \rho _{n}^{2}{{\psi }_{1}} \right\|}_{_{{{L}^{1}}\left( {{\mathbb{R}}^{n}} \right)}}}+{{\left\| \rho _{n}^{2}{{\psi }_{g}} \right\|}_{_{{{L}^{1}}\left( {{\mathbb{R}}^{n}} \right)}}} \right)\int_{\tau }^{t}{{{e}^{\eta s}}\mathbb{E}\left( \beta {{\left\| u\left( s \right) \right\|}^{2}}+\alpha {{\left\| v\left( s \right) \right\|}^{2}} \right)}ds.
\end{split}
\end{align}
Taking the limit of \eqref{d33} as $m\to \infty $, then replacing $\tau $ and $t$ in \eqref{d33} by $\tau -t$ and $\tau $, respectively, by Fatou's lemma, \eqref{c21} and Lemma 5.1 we can get that there exist ${{c}_{5}}={{c}_{5}}\left( \tau  \right)>0$ and ${{T}_{1}}={{T}_{1}}\left( \tau ,D_{1} \right)\ge 1$ such that $t\ge {{T}_{1}}$,
\begin{align}\label{d35}
	\begin{split}
	& \mathbb{E}\left( \beta  {{\left\| {{\rho }_{n}}u\left( \tau ,\tau -t,{{\xi }_{1}} \right) \right\|}^{2}}+\alpha {{\left\| {{\rho }_{n}}v\left( \tau ,\tau -t,{{\xi }_{2}} \right) \right\|}^{2}} \right) \\
	 \le& {{e}^{-\eta t}} \mathbb{E}\left( \left( \beta  {{\left\| {{\rho }_{n}}{{\xi }_{1}} \right\|}^{2}}+\alpha {{\left\| {{\rho }_{n}}{{\xi }_{2}} \right\|}^{2}} \right) \right)+2\alpha {{\eta }^{-1}}\mathbb{E}\left( \int_{\tau }^{t}{{{e}^{\eta s}}{{\left\| {{G}_{2}}\left( s \right) \right\|}^{2}}}ds \right) \\
	& +2\beta \int_{-\infty }^{t}{{{e}^{\eta s}}\left\| {{\rho }_{n}}{{\phi }_{1}}\left( s \right) \right\|_{{{L}^{1}}\left( {{\mathbb{R}}^{n}} \right)}^{2}}ds+\beta {{\eta }^{-1}}\int_{-\infty }^{\tau }{{{e}^{\eta \left( s-\tau  \right)}}\left\| {{\rho }_{n}}{{\phi }_{g}}\left( s \right) \right\|_{{{L}^{2}}\left( {{\mathbb{R}}^{n}} \right)}^{2}}ds \\
	& +2\beta \int_{-\infty }^{\tau }{{{e}^{\eta \left( s-\tau  \right)}}\left\| {{\rho }_{n}}{{\theta }_{1}}\left( s \right) \right\|_{{{L}^{2}}\left( {{\mathbb{R}}^{n}},{{l}^{2}} \right)}^{2}}ds+2\alpha \int_{-\infty }^{\tau }{{{e}^{\eta \left( s-\tau  \right)}}\left\| {{\rho }_{n}}{{\theta }_{2}}\left( s \right) \right\|}_{{{L}^{2}}\left( {{\mathbb{R}}^{n}},{{l}^{2}} \right)}^{2}ds \\
	& +2{{n}^{-1}}{{\left\| \nabla \rho  \right\|}_{{{L}^{\infty }}\left( {{\mathbb{R}}^{n}} \right)}}{{c}_{5}}+8\beta {{\left\| {{\rho }_{n}}w \right\|}^{2}}\left\| {{\beta }_{1}} \right\|_{{{l}^{2}}}^{2}{{\eta }^{-1}} \\
	& +\left( 8{{\left\| {{\rho }_{n}}w \right\|}^{2}}\left\| {{\beta }_{1}} \right\|_{{{l}^{2}}}^{2}+2{{\left\| \rho _{n}^{2}{{\psi }_{1}} \right\|}_{_{{{L}^{1}}\left( {{\mathbb{R}}^{n}} \right)}}}+{{\left\| \rho _{n}^{2}{{\psi }_{g}} \right\|}_{_{{{L}^{1}}\left( {{\mathbb{R}}^{n}} \right)}}} \right){{c}_{5}}.
\end{split}
\end{align}
Note that $\mathcal{L}_{{{\xi }_{0}}} \in D_{1}\left( \tau -t \right)$ and $D_{1}\in {{\mathcal{D}}_{0}}$, then we have
\begin{align*}
	\underset{t\to \infty }{\mathop{\lim }}\,{{e}^{-\eta t}} \mathbb{E}\left( \left( \beta  {{\left\| {{\rho }_{n}}{{\xi }_{1}} \right\|}^{2}}+\alpha {{\left\| {{\rho }_{n}}{{\xi }_{2}} \right\|}^{2}} \right) \right)\le \underset{t\to \infty }{\mathop{\lim }}\,b{{e}^{-\eta t}}\left\| {{D}_{1}}\left( \tau -t \right) \right\|_{{{\mathcal{P}}_{2}}\left( {{L}^{2}}\left( {{\mathbb{R}}^{n}} \right) \right)}^{2}=0,
\end{align*}
and hence for every $\varepsilon >0$, there exists ${{T}_{2}}={{T}_{2}}\left( \varepsilon ,\tau ,D_{1} \right)\ge {{T}_{1}}$ such that for all $t\ge {{T}_{2}}$,
\begin{align}\label{d36}
	\begin{split}
{{e}^{-\eta t}} \mathbb{E}\left( \left( \beta  {{\left\| {{\rho }_{n}}{{\xi }_{1}} \right\|}^{2}}+\alpha {{\left\| {{\rho }_{n}}{{\xi }_{2}} \right\|}^{2}} \right) \right)<\frac{\varepsilon}{4}.
\end{split}
\end{align}
Since ${{G}_{2}}\in {{L}^{\infty }}\left( \mathbb{R},{{L}^{2}}\left( {{\mathbb{R}}^{n}} \right) \right)$ and ${{\phi }_{1}}\in {{L}^{\infty }}\left( \mathbb{R},{{L}^{1}}\left( {{\mathbb{R}}^{n}} \right) \right)$ we can get that there exists
${{N}_{1}}={{N}_{1}}\left( \varepsilon ,\tau  \right)\in \mathbb{N}$ such that for all $n\ge {{N}_{1}}$,
\begin{align}\label{d37}
	\begin{split}
	  & 2\alpha {{\eta }^{-1}}\mathbb{E}\left( \int_{\tau }^{t}{{{e}^{\eta s}}{{\left\| {{G}_{2}}\left( s \right) \right\|}^{2}}}ds \right)+2\beta \int_{-\infty }^{\tau}{{{e}^{\eta \left( s-\tau  \right)}}\left\| {{\rho }_{n}}{{\phi }_{1}}\left( s \right) \right\|_{{{L}^{1}}\left( {{\mathbb{R}}^{n}} \right)}^{2}}ds \\
	  \le & 2\alpha {{\eta }^{-1}}\int_{-\infty }^{\tau }{\int_{\left\vert x \right\vert\ge n}{{{e}^{\eta \left( s-\tau  \right)}}\left\vert {{G}_{2}}\left( s,x \right) \right\vert}dx}ds+2\beta \int_{-\infty }^{\tau }{\int_{\left\vert x \right\vert\ge n}{{{e}^{\eta \left( s-\tau  \right)}}{{\left\vert {{\rho }_{n}}{{\phi }_{1}}\left( s \right) \right\vert}^{2}}}dx}ds<\frac{\varepsilon }{4}.
\end{split}
\end{align}
By \eqref{c24} we can get there exists ${{N}_{2}}={{N}_{2}}\left( \varepsilon ,\tau  \right)\ge {{N}_{1}}$ such that for all $n\ge {{N}_{2}}$,
\begin{align}\label{d38}
	\begin{split}
	& 2\beta \int_{-\infty }^{\tau }{{{e}^{\eta \left( s-\tau  \right)}}\left\| {{\rho }_{n}}{{\theta }_{1}}\left( s \right) \right\|_{{{L}^{2}}\left( {{\mathbb{R}}^{n}},{{l}^{2}} \right)}^{2}}ds+2\alpha \int_{-\infty }^{\tau }{{{e}^{\eta \left( s-\tau  \right)}}\left\| {{\rho }_{n}}{{\theta }_{2}}\left( s \right) \right\|}_{{{L}^{2}}\left( {{\mathbb{R}}^{n}},{{l}^{2}} \right)}^{2}ds \\
	& +\beta {{\eta }^{-1}}\int_{-\infty }^{\tau }{{{e}^{\eta \left( s-\tau  \right)}}\left\| {{\rho }_{n}}{{\phi }_{g}}\left( s \right) \right\|_{{{L}^{2}}\left( {{\mathbb{R}}^{n}} \right)}^{2}}ds \\
	 \le &2\beta \int_{-\infty }^{\tau }{\int_{\left\vert x \right\vert\ge n}{{{e}^{\eta \left( s-\tau  \right)}}{{\left\vert {{\rho }_{n}}{{\theta }_{1}}\left( s \right) \right\vert}^{2}}}dx}ds+2\alpha \int_{-\infty }^{\tau }{\int_{\left\vert x \right\vert\ge n}{{{e}^{\eta \left( s-\tau  \right)}}{{\left\vert {{\rho }_{n}}{{\theta }_{2}}\left( s \right) \right\vert}^{2}}}dx}ds \\
	 & +\beta {{\eta }^{-1}}\int_{-\infty }^{\tau }{\int_{\left\vert x \right\vert\ge n}{{{e}^{\eta \left( s-\tau  \right)}}{{\left\vert {{\rho }_{n}}{{\phi }_{g}}\left( s \right) \right\vert}^{2}}}dx}ds<\frac{\varepsilon }{4}.
	\end{split}
\end{align}
For the last three term on the left-hand side of \eqref{d35},  we can get there exists ${{N}_{3}}={{N}_{3}}\left( \varepsilon ,\tau  \right)\ge {{N}_{2}}$ such that for all $n\ge {{N}_{3}}$,
\begin{align}\label{d39}
	\begin{split}
	 & 2{{n}^{-1}}{{\left\| \nabla \rho  \right\|}_{{{L}^{\infty }}\left( {{\mathbb{R}}^{n}} \right)}}{{c}_{5}}+8{{\left\| {{\rho }_{n}}w \right\|}^{2}}\left\| {{\beta }_{1}} \right\|_{{{l}^{2}}}^{2}{{\eta }^{-1}}  +\left( 8{{\left\| {{\rho }_{n}}w \right\|}^{2}}\left\| {{\beta }_{1}} \right\|_{{{l}^{2}}}^{2}+2{{\left\| \rho _{n}^{2}{{\psi }_{1}} \right\|}_{_{{{L}^{1}}\left( {{\mathbb{R}}^{n}} \right)}}}+{{\left\| \rho _{n}^{2}{{\psi }_{g}} \right\|}_{_{{{L}^{1}}\left( {{\mathbb{R}}^{n}} \right)}}} \right){{c}_{5}} \\
	 \le&  2{{n}^{-1}}{{\left\| \nabla \rho  \right\|}_{{{L}^{\infty }}\left( {{\mathbb{R}}^{n}} \right)}}{{c}_{5}}+8\left\| {{\beta }_{1}} \right\|_{{{l}^{2}}}^{2}{{\eta }^{-1}}\int_{\left\vert x \right\vert\ge n}{{{w}^{2}}\left( x \right)}dx \\
	 & +\left( 8\left\| {{\beta }_{1}} \right\|_{{{l}^{2}}}^{2}\int_{\left\vert x \right\vert\ge n}{{{w}^{2}}\left( x \right)}dx+2\int_{\left\vert x \right\vert\ge n}{\left\vert {{\psi }_{1}}\left( x \right) \right\vert}dx+\int_{\left\vert x \right\vert\ge n}{\left\vert {{\psi }_{g}}\left( x \right) \right\vert}dx \right){{c}_{5}}<\frac{\varepsilon }{4}.
	\end{split}
\end{align}
It follows from \eqref{d35}-\eqref{d39} that for all  $t\ge {{T}_{2}}$ and $n\ge {{N}_{3}}$,
\begin{align*}
	 & \mathbb{E}\left( \int_{\left\vert x \right\vert\ge \sqrt{2}n}{{{\left\vert u\left( \tau ,\tau -t,{{\xi }_{0}} \right)\left( x \right) \right\vert}^{2}}+{{\left\vert v\left( \tau ,\tau -t,{{\xi }_{0}} \right)\left( x \right) \right\vert}^{2}}}dx \right)
	 \le  \mathbb{E}\left( \beta {{\left\| {{\rho }_{n}}u\left( t \right) \right\|}^{2}}+\alpha {{\left\| {{\rho }_{n}}v\left( t \right) \right\|}^{2}} \right)<\varepsilon,
\end{align*}
as desired.
\end{proof}
\begin{lem}\label{lem4.6}
	Suppose assumption  $\mathbf{(H_{1})-({H_{4}})}$, \eqref{c21} and \eqref{c24} hold, then for every $\tau\in \mathbb{R}$ and ${{D}_{1}}=\left\{ {{D}_{1}}\left( t \right):t\in \mathbb{R} \right\}\in {{\mathcal{D}}_{0}}$, there exists $T=T(\tau,D_{1})>0$ such that for all $t\geq T$, the solution  $k=(u,v)$ of  \eqref{c18}-\eqref{c19} satisfies
	\begin{align*}
		& \mathbb{E}\left( \left\| \left( u\left( \tau ,\tau -s,{{\xi }_{0}} \right),v\left( \tau ,\tau -s,{{\xi }_{0}} \right) \right) \right\|_{{\mathbb{L}^{2}}\left( {{\mathbb{R}}^{n}} \right)}^{4} \right) \\
		 \le& {{M}_{5}}+{{M}_{5}}\int_{-\infty }^{\tau }{{{e}^{\eta \left( s-\tau  \right)}}\left( \left\| {{\phi }_{g}}\left( s \right) \right\|_{_{{{L}^{2}}\left( {{\mathbb{R}}^{n}} \right)}}^{4}+\left\| {{\theta }_{1}}\left( s \right) \right\|_{{{L}^{2}}\left( {{\mathbb{R}}^{n}},{{l}^{2}} \right)}^{4}+\left\| {{\theta }_{2}}\left( s \right) \right\|_{{{L}^{2}}\left( {{\mathbb{R}}^{n}},{{l}^{2}} \right)}^{4} \right)}ds ,
	\end{align*}
	where $\xi _{0}\in L_{{{\mathcal{F}}_{\tau -t}}}^{2}\left( \Omega ,{\mathbb{L}^{2}}\left( {{\mathbb{R}}^{n}} \right) \right)$ with ${{\mathcal{L}}_{{{\xi }_{0}}}}\in {{D}_{1}}\left( \tau -t \right)$,  $\eta>0$ is the same number as in  \eqref{c21} and $M_{5}$ is a positive constant independent of $\tau$ and $D_{2}$.
\end{lem}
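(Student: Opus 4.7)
The plan is to establish the fourth-moment bound by applying It\^o's formula to the square of the weighted $L^2$-energy $Y(t) := \beta\|u(t)\|^2 + \alpha\|v(t)\|^2$ already analyzed in Lemma~\ref{lem4.1}. From identity~\eqref{m3}, $Y$ is a real-valued semimartingale whose drift collects all the $dt$-terms in \eqref{d5} (before expectations) and whose martingale part $M$ has quadratic-variation density controlled, via Cauchy--Schwarz, by a constant times
\[
\beta^2\|\sigma(t,u,\mathcal{L}_{u(t)})\|_{L_2(l^2,L^2(\mathbb{R}^n))}^2\|u(t)\|^2 + \alpha^2\|\delta(t,v(t))\|_{L_2(l^2,L^2(\mathbb{R}^n))}^2\|v(t)\|^2.
\]
It\^o's formula then gives $d(Y^2) = 2Y\,dY + d\langle M\rangle$; multiplying by $e^{\eta t}$ and taking expectations eliminates the martingale term.

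I would then estimate every resulting contribution using \textbf{(H1)}--\textbf{(H4)}, exactly as in Lemma~\ref{lem4.1} but with the extra factor $2Y$ in front. The dissipative piece $-4Y(\beta\lambda\|u\|^2 + \alpha\gamma\|v\|^2)$ produces a strict $-c_1\lambda\,Y^2$; the $f$-contribution, via \eqref{c2}, supplies an additional negative term of order $\|u\|_{L^p}^p\,Y$ plus a quartic remainder absorbable by the dissipation; the $G_1$ and $G_2$ terms split via Young's inequality into $\eta\,Y^2$ plus pieces of order $\|\phi_g\|^4$, $\|\phi_g\|^2\|G_2\|^2$, $\|G_2\|^4$; and the noise contributions $2Y(\beta\|\sigma\|^2 + \alpha\|\delta\|^2)$ together with $d\langle M\rangle/dt$ produce, via \eqref{c16}--\eqref{c17}, only $\|\theta_i\|^2\,Y + Y^2$ terms, each of which further splits into $\|\theta_i\|^4 + Y^2$ by Young's inequality. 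The distribution-dependent contributions $\mu(\|\cdot\|^2) = \mathbb{E}\|u\|^2$ are absorbed via Cauchy--Schwarz combined with the second-moment bound of Lemma~\ref{lem4.1}.

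Collecting contributions, the total $Y^2$-coefficient on the right-hand side is strictly negative thanks to~\eqref{c21}, and the standard weighted Gronwall argument analogous to \eqref{d9}--\eqref{d11} yields
\[
\mathbb{E}[Y(\tau)^2] \leq e^{-\eta t}\mathbb{E}[Y(\tau-t)^2] + c_2 + c_2\int_{-\infty}^\tau e^{\eta(s-\tau)}\bigl(\|\phi_g(s)\|^4 + \|\theta_1(s)\|_{L^2(\mathbb{R}^n,l^2)}^4 + \|\theta_2(s)\|_{L^2(\mathbb{R}^n,l^2)}^4\bigr)\,ds,
\]
where the residual $\int_{\tau-t}^\tau e^{\eta(s-\tau)}\mathbb{E}[Y(s)]\,ds$ contribution produced by the lower-order terms has been handled by Lemma~\ref{lem4.1} and \eqref{c24}. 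For $t \geq T(\tau,D_1)$ sufficiently large the initial-data term $e^{-\eta t}\mathbb{E}[Y(\tau-t)^2]$ vanishes via~\eqref{c23} (the fourth-moment assumption on the initial law being essential here).

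The main obstacle is that, unlike the second-moment case, the quadratic-variation term $d\langle M\rangle/dt$ contributes a genuinely quartic term in $(u,v)$ which must be absorbed by the product of the dissipation with $2Y$ rather than by the dissipation alone. This forces a careful accounting of all quartic coefficients, explaining the role of the strict inequality~\eqref{c21}; the stronger fourth-order integrability~\eqref{c25} of the forcing terms is likewise essential for the finiteness of the right-hand side.
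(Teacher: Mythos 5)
Your proposal follows essentially the same route as the paper: the paper applies It\^o's formula to $e^{2\eta t}\left(\beta\|u\|^4+\alpha\|v\|^4\right)$ (rather than to $\left(\beta\|u\|^2+\alpha\|v\|^2\right)^2$, an equivalent quantity), estimates the drift and quadratic-variation contributions via \textbf{(H1)}--\textbf{(H4)} and Young's inequality, absorbs the quartic terms using \eqref{c21}, lets the initial-data term decay through the $\mathcal{P}_4$-condition \eqref{c23}, and relies on \eqref{c25} for finiteness of the limiting integral, exactly as you outline. The one step you gloss over is localization: since fourth moments of the solution are not a priori finite, the paper introduces the stopping times ${\tau }_{m}=\inf \{t\ge \tau :\left\| k\left( t \right) \right\|\ge m\}$ before taking expectations and then passes to the limit by Fatou's lemma, a routine repair of your claim that taking expectations simply eliminates the martingale term.
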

\begin{proof}
By \eqref{m3} and Ito's formula, we can get  for all $t\ge \tau $,
\begin{align}\label{d40}
	\begin{split}
	& {{e}^{2\eta t}}\left( \beta  {{\left\| u\left( t \right) \right\|}^{4}}+\alpha {{\left\| v\left( t \right) \right\|}^{4}} \right)+4\beta  \int_{\tau }^{t}{{{e}^{2\eta s}}{{\left\| u\left( s \right) \right\|}^{2}}{{\left\| \nabla u\left( s \right) \right\|}^{2}}}ds \\
	& -2\eta \int_{\tau }^{t}{{{e}^{2\eta s}}\left( \beta  {{\left\| u\left( s \right) \right\|}^{4}}+\alpha {{\left\| v\left( s \right) \right\|}^{4}} \right)}ds+4\beta \lambda  \int_{\tau }^{t}{{{e}^{2\eta s}}{{\left\| u\left( s \right) \right\|}^{4}}}ds \\
	& +4\beta  \int_{\tau }^{t}{{{e}^{2\eta s}}{{\left\| u\left( s \right) \right\|}^{2}}\left( f\left( s,x,u\left( s \right),{{\mathcal{L}}_{u\left( s \right)}} \right),u\left( s \right) \right)}ds+4\alpha \gamma  \int_{\tau }^{t}{{{e}^{\eta s}}{{\left\| v\left( s \right) \right\|}^{4}}}ds \\
	 =&{{e}^{2\eta \tau }}\left( \beta  {{\left\| {{\xi }_{1}} \right\|}^{4}}+\alpha {{\left\| {{\xi }_{2}} \right\|}^{4}} \right)+4\alpha  \int_{\tau }^{t}{{{e}^{\eta s}}{{\left\| v\left( s \right) \right\|}^{2}}\left( {{G}_{2}}\left( s \right),v\left( s \right) \right)}ds \\
	& +4\beta\int_{\tau }^{t}{{{e}^{2\eta s}}{{\left\| u\left( s \right) \right\|}^{2}}\left( {{G}_{1}}\left( s,x,u\left( s \right),{{\mathcal{L}}_{u\left( s \right)}} \right),u\left( s \right) \right)}ds \\
	& +2\beta  \int_{\tau }^{t}{{{e}^{2\eta s}}{{\left\| u\left( s \right) \right\|}^{2}}\left\| {{\sigma }}\left( s,u,{{\mathcal{L}}_{u}} \right) \right\|_{{{L}_{2}}\left( {{l}^{2}},{{L}^{2}}\left( {{\mathbb{R}}^{n}} \right) \right)}^{2}}ds+2\alpha  \int_{\tau }^{t}{{{e}^{2\eta s}}{{\left\| v\left( s \right) \right\|}^{2}}\left\| {\delta}\left( s,v\left( s \right) \right) \right\|_{{{L}_{2}}\left( {{l}^{2}},{{L}^{2}}\left( {{\mathbb{R}}^{n}} \right) \right)}^{2}}ds \\
	& +4\beta  \int_{\tau }^{t}{{{e}^{2\eta s}}\left( {{\left\| u\left( s \right) \right\|}^{2}},\left\| {{\sigma }}\left( s,u,{{\mathcal{L}}_{u}} \right) \right\|_{{{L}_{2}}\left( {{l}^{2}},{{L}^{2}}\left( {{\mathbb{R}}^{n}} \right) \right)}^{2} \right)}ds+4\alpha  \int_{\tau }^{t}{{{e}^{2\eta s}}\left( {{\left\| v\left( s \right) \right\|}^{2}},\left\| {\delta}\left( s,v \right) \right\|_{{{L}_{2}}\left( {{l}^{2}},{{L}^{2}}\left( {{\mathbb{R}}^{n}} \right) \right)}^{2} \right)}ds \\
	& +4\beta  \int_{\tau }^{t}{{{e}^{2\eta s}}{{\left\| u\left( s \right) \right\|}^{2}}\left( {{\sigma }}\left( s,u,{{\mathcal{L}}_{u}} \right),u \right)}dW\left( s \right)+4\alpha \int_{\tau }^{t}{{{e}^{2\eta s}}{{\left\| v\left( s \right) \right\|}^{2}}\left( {\delta}\left( s,v\left( s \right) \right),v \right)}dW\left( s \right).
\end{split}
\end{align}
$\mathbb{P}$-almost surely. Given $m\in \mathbb{N}$, denote by
\[{{\tau }_{m}}=\inf \{t\ge \tau :\left\| k\left( t \right) \right\|\ge m\}.\]
Consequently we can get that for all  $t\ge \tau $,
\begin{align}\label{d41}
	\begin{split}
	  & \mathbb{E}\left( {{e}^{2\eta \left( t\wedge {{\tau }_{m}} \right)}}\left( \beta  {{\left\| u\left( t\wedge {{\tau }_{m}} \right) \right\|}^{4}}+\alpha {{\left\| v\left( t\wedge {{\tau }_{m}} \right) \right\|}^{4}} \right) \right)+4\beta  \mathbb{E}\left( \int_{\tau }^{t\wedge {{\tau }_{m}}}{{{e}^{2\eta s}}{{\left\| u\left( s \right) \right\|}^{2}}{{\left\| \nabla u\left( s \right) \right\|}^{2}}}ds \right) \\
	 & +2\left( 2\lambda -\eta  \right)\mathbb{E}\left( \int_{\tau }^{t\wedge {{\tau }_{m}}}{{{e}^{2\eta s}}\left( \beta  {{\left\| u\left( s \right) \right\|}^{4}}+\alpha {{\left\| v\left( s \right) \right\|}^{4}} \right)}ds \right) \\
	 \le & {{e}^{2\eta \tau }}\mathbb{E}\left(  \beta  {{\left\| {{\xi }_{1}} \right\|}^{4}}+\alpha {{\left\| {{\xi }_{2}} \right\|}^{4}}  \right)-4\beta \mathbb{E}\left( \int_{\tau }^{t\wedge {{\tau }_{m}}}{{{e}^{2\eta s}}{{\left\| u\left( s \right) \right\|}^{2}}\left( f\left( s,x,u\left( s,x \right),{{\mathcal{L}}_{u\left( s \right)}} \right),u\left( s \right) \right)}ds \right) \\
	 & +4\beta  \mathbb{E}\left( \int_{\tau }^{t\wedge {{\tau }_{m}}}{{{e}^{2\eta s}}{{\left\| u\left( s \right) \right\|}^{2}}\left( {{G}_{1}}\left( s,x,u\left( s,x \right),{{\mathcal{L}}_{u\left( s,\tau -t,{{\xi }_{1}} \right)}} \right),u\left( s \right) \right)}ds \right) \\
	 & +4\alpha  \mathbb{E}\left( \int_{\tau }^{t\wedge {{\tau }_{m}}}{{{e}^{\eta s}}{{\left\| v\left( s \right) \right\|}^{2}}\left( {{G}_{2}}\left( s,x \right),v\left( s \right) \right)}ds \right) \\
	 & +6\alpha \mathbb{E}\left( \int_{\tau }^{t\wedge {{\tau }_{m}}}{{{e}^{2\eta s}}{{\left\| v\left( s \right) \right\|}^{2}}\left\| {\delta}\left( s,v\left( s \right) \right) \right\|_{{{L}_{2}}\left( {{l}^{2}},{{L}^{2}}\left( {{\mathbb{R}}^{n}} \right) \right)}^{2}}ds \right)\\
	 & +6\beta  \mathbb{E}\left( \int_{\tau }^{t\wedge {{\tau }_{m}}}{{{e}^{2\eta s}}{{\left\| u\left( s \right) \right\|}^{2}}\left\| {{\sigma }}\left( s,u\left( s,\tau -t,{{\xi }_{1}} \right),{{\mathcal{L}}_{u\left( s,\tau -t,{{\xi }_{1}} \right)}} \right) \right\|_{{{L}_{2}}\left( {{l}^{2}},{{L}^{2}}\left( {{\mathbb{R}}^{n}} \right) \right)}^{2}}ds \right) .
\end{split}
\end{align}
For the second term on the left-hand side of \eqref{d41}, we can get
\begin{align}\label{d42}
	\begin{split}
& -4\beta \mathbb{E}\left( \int_{\tau }^{t\wedge {{\tau }_{m}}}{{{e}^{2\eta s}}{{\left\| u\left( s \right) \right\|}^{2}}\left( f\left( s,x,u\left( s,x \right),{{\mathcal{L}}_{u\left( s \right)}} \right),u\left( s \right) \right)}ds \right) \\
 \le & 4\beta \mathbb{E}\left( \int_{\tau }^{t\wedge {{\tau }_{m}}}{{{e}^{2\eta s}}{{\left\| {{\phi }_{1}}\left( s \right) \right\|}_{{{L}^{\infty }}\left( {{\mathbb{R}}^{n}} \right)}}{{\left\| u\left( s \right) \right\|}^{4}}}ds \right) \\
& +4\beta \mathbb{E}\left( \int_{\tau }^{t\wedge {{\tau }_{m}}}{{{e}^{2\eta s}}{{\left\| u\left( s \right) \right\|}^{2}}\left( {{\left\| {{\phi }_{1}}\left( s \right) \right\|}_{{{L}^{1}}\left( {{\mathbb{R}}^{n}} \right)}}+{{\left\| {{\psi }_{1}} \right\|}_{{{L}^{1}}\left( {{\mathbb{R}}^{n}} \right)}}\mathbb{E}\left( {{\left\| u\left( s \right) \right\|}^{2}} \right) \right)}ds \right) \\
 \le&  4\beta \left( \int_{\tau }^{t}{{{e}^{2\eta s}}{{\left\| {{\phi }_{1}}\left( s \right) \right\|}_{{{L}^{\infty }}\left( {{\mathbb{R}}^{n}} \right)}}\mathbb{E}\left( {{\left\| u\left( s \right) \right\|}^{4}} \right)}ds \right)+4\beta \mathbb{E}\left( \int_{\tau }^{t}{{{e}^{2\eta s}}{{\left\| u\left( s \right) \right\|}^{2}}{{\left\| {{\phi }_{1}}\left( s \right) \right\|}_{{{L}^{1}}\left( {{\mathbb{R}}^{n}} \right)}}}ds \right) \\
& +4\beta \left( \int_{\tau }^{t\wedge {{\tau }_{m}}}{{{e}^{2\eta s}}{{\left\| {{\psi }_{1}} \right\|}_{{{L}^{1}}\left( {{\mathbb{R}}^{n}} \right)}}{{\left( \mathbb{E}\left( {{\left\| u\left( s \right) \right\|}^{2}} \right) \right)}^{2}}}ds \right) \\
\le&  \int_{\tau }^{t}{{{e}^{2\eta s}}\left( 4{{\left\| {{\phi }_{1}}\left( s \right) \right\|}_{{{L}^{\infty }}\left( {{\mathbb{R}}^{n}} \right)}}+4{{\left\| {{\psi }_{1}} \right\|}_{{{L}^{1}}\left( {{\mathbb{R}}^{n}} \right)}}+\frac{1}{3}\eta  \right)}\mathbb{E}\left( \beta {{\left\| u\left( s \right) \right\|}^{4}} \right)ds \\
& +12{{\eta }^{-1}}\int_{\tau }^{t}{{{e}^{2\eta s}}\left\| {{\phi }_{1}}\left( s \right) \right\|_{{{L}^{1}}\left( {{\mathbb{R}}^{n}} \right)}^{2}}ds
 .
\end{split}
\end{align}
For the third  term on the right-hand side of \eqref{d41}, we can get
\begin{align}\label{d43}
	\begin{split}
	& 4\beta  \mathbb{E}\left( \int_{\tau }^{t\wedge {{\tau }_{m}}}{{{e}^{2\eta s}}{{\left\| u\left( s \right) \right\|}^{2}}\left( {{G}_{1}}\left( s,x,u\left( s,x \right),{{\mathcal{L}}_{u\left( s,\tau -t,{{\xi }_{1}} \right)}} \right),u\left( s \right) \right)}ds \right) \\
	 \le	& 2\beta \mathbb{E}\left( \int_{\tau }^{t\wedge {{\tau }_{m}}}{{{e}^{2\eta s}}\left( \eta {{\left\| u\left( s \right) \right\|}^{4}}+{{\eta }^{-1}}{{\left\| {{\phi }_{g}}\left( s \right) \right\|}^{2}}{{\left\| u\left( s \right) \right\|}^{2}} \right)}ds \right) \\
	& +2\beta \mathbb{E}\left( \int_{\tau }^{t\wedge {{\tau }_{m}}}{{{e}^{2\eta s}}{{\left\| u\left( s \right) \right\|}^{2}}{{\left\| {{\psi }_{g}} \right\|}_{{{L}^{1}}\left( {{\mathbb{R}}^{n}} \right)}}\mathbb{E}\left( {{\left\| u\left( s \right) \right\|}^{2}} \right)}ds \right) \\
	& +2\beta \mathbb{E}\left( \int_{\tau }^{t\wedge {{\tau }_{m}}}{{{e}^{2\eta s}}\left( 2{{\left\| {{\phi }_{7}}\left( s \right) \right\|}_{{{L}^{\infty }}\left( {{\mathbb{R}}^{n}} \right)}}+{{\left\| {{\psi }_{g}} \right\|}_{{{L}^{\infty }}\left( {{\mathbb{R}}^{n}} \right)}} \right){{\left\| u\left( s \right) \right\|}^{4}}}ds \right) \\
	 \le&  2\beta \mathbb{E}\left( \int_{\tau }^{t}{{{e}^{2\eta s}}\left( \frac{4}{3}\eta {{\left\| u\left( s \right) \right\|}^{4}}+\frac{3}{4}{{\eta }^{-3}}{{\left\| {{\phi }_{g}}\left( s \right) \right\|}^{4}} \right)}ds \right) \\
	& +2\beta \mathbb{E}\left( \int_{\tau }^{t}{{{e}^{2\eta s}}{{\left\| u\left( s \right) \right\|}^{2}}{{\left\| {{\psi }_{g}} \right\|}_{{{L}^{1}}\left( {{\mathbb{R}}^{n}} \right)}}\mathbb{E}\left( {{\left\| u\left( s \right) \right\|}^{2}} \right)}ds \right) \\
	& +2\beta \mathbb{E}\left( \int_{\tau }^{t}{{{e}^{2\eta s}}\left( 2{{\left\| {{\phi }_{7}}\left( s \right) \right\|}_{{{L}^{\infty }}\left( {{\mathbb{R}}^{n}} \right)}}+{{\left\| {{\psi }_{g}} \right\|}_{{{L}^{\infty }}\left( {{\mathbb{R}}^{n}} \right)}} \right){{\left\| u\left( s \right) \right\|}^{4}}}ds \right) \\
	\le&  \frac{3}{2}{{\eta }^{-3}}\left( \int_{\tau }^{t}{{{e}^{2\eta s}}{{\left\| {{\phi }_{g}}\left( s \right) \right\|}^{4}}}ds \right) \\
	& +2\int_{\tau }^{t}{{{e}^{2\eta s}}\left( 2{{\left\| {{\phi }_{7}}\left( s \right) \right\|}_{{{L}^{\infty }}\left( {{\mathbb{R}}^{n}} \right)}}+{{\left\| {{\psi }_{g}} \right\|}_{{{L}^{\infty }}\left( {{\mathbb{R}}^{n}} \right)}}+{{\left\| {{\psi }_{g}} \right\|}_{{{L}^{1}}\left( {{\mathbb{R}}^{n}} \right)}}+\frac{4}{3}\eta  \right)\mathbb{E}\left( \beta {{\left\| u\left( s \right) \right\|}^{4}} \right)}ds .
\end{split}
\end{align}
For the fourth and fifth term on the left-hand side of \eqref{d41}, we can get
\begin{align}\label{d44}
	\begin{split}
  &  4\alpha  \mathbb{E}\left( \int_{\tau }^{t\wedge {{\tau }_{m}}}{{{e}^{\eta s}}{{\left\| v\left( s \right) \right\|}^{2}}\left( {{G}_{2}}\left( s,x \right),v\left( s \right) \right)}ds \right) \\
  & +6\alpha \mathbb{E}\left( \int_{\tau }^{t\wedge {{\tau }_{m}}}{{{e}^{2\eta s}}{{\left\| v\left( s \right) \right\|}^{2}}\left\| {\delta}\left( s,v\left( s \right) \right) \right\|_{{{L}_{2}}\left( {{l}^{2}},{{L}^{2}}\left( {{\mathbb{R}}^{n}} \right) \right)}^{2}}ds \right) \\
  \le& \frac{2\alpha }{\eta }\mathbb{E}\left( \int_{\tau }^{t\wedge {{\tau }_{m}}}{{{e}^{\eta s}}{{\left\| v\left( s \right) \right\|}^{2}}{{\left\| {{G}_{2}}\left( s \right) \right\|}^{2}}}ds \right)+2\alpha \eta \mathbb{E}\left( \int_{\tau }^{t\wedge {{\tau }_{m}}}{{{e}^{\eta s}}{{\left\| v\left( s \right) \right\|}^{4}}}ds \right) \\
 & +12\alpha \mathbb{E}\left( \int_{\tau }^{t\wedge {{\tau }_{m}}}{{{e}^{2\eta s}}{{\left\| v\left( s \right) \right\|}^{2}}\left\| {{\theta }_{2}}\left( s \right) \right\|_{{{L}^{2}}\left( {{\mathbb{R}}^{n}}, {{l}^{2}} \right)}^{2}ds} \right)+12\alpha \left\| \delta  \right\|_{{{l}^{2}}}^{2}\mathbb{E}\left( \int_{\tau }^{t}{{{e}^{2\eta s}}{{\left\| v\left( s \right) \right\|}^{4}}ds} \right) \\
  \le &\frac{3}{2}{{\eta }^{-3}}\mathbb{E}\left( \int_{\tau }^{t\wedge {{\tau }_{m}}}{{{e}^{\eta s}}{{\left\| {{G}_{2}}\left( s \right) \right\|}^{4}}}ds \right)+3\alpha \eta \mathbb{E}\left( \int_{\tau }^{t\wedge {{\tau }_{m}}}{{{e}^{\eta s}}{{\left\| v\left( s \right) \right\|}^{4}}}ds \right) \\
 & +12\alpha \left\| \delta  \right\|_{{{l}^{2}}}^{2}\mathbb{E}\left( \int_{\tau }^{t}{{{e}^{2\eta s}}{{\left\| v\left( s \right) \right\|}^{4}}ds} \right)+108\alpha {{\eta }^{-1}}\int_{\tau }^{t}{{{e}^{2\eta s}}\left\| {{\theta }_{2}}\left( s \right) \right\|_{{{L}^{2}}\left( {{\mathbb{R}}^{n}},{{l}^{2}} \right)}^{4}ds} .
\end{split}
\end{align}
For the last term on the left-hand side of \eqref{d41}, we can get
\begin{align}\label{d45}
	\begin{split}
	& 6\beta  \mathbb{E}\left( \int_{\tau }^{t\wedge {{\tau }_{m}}}{{{e}^{2\eta s}}{{\left\| u\left( s \right) \right\|}^{2}}\left\| {{\sigma }}\left( s,u\left( s,\tau -t,{{\xi }_{1}} \right),{{\mathcal{L}}_{u\left( s,\tau -t,{{\xi }_{1}} \right)}} \right) \right\|_{{{L}_{2}}\left( {{l}^{2}},{{L}^{2}}\left( {{\mathbb{R}}^{n}} \right) \right)}^{2}}ds \right) \\
	 \le& 12\beta \mathbb{E}\left( \int_{\tau }^{t\wedge {{\tau }_{m}}}{{{e}^{2\eta s}}{{\left\| u\left( s \right) \right\|}^{2}}\left\| {{\theta }_{1}}\left( s \right) \right\|_{{{L}^{2}}\left( {{\mathbb{R}}^{n}},{{l}^{2}} \right)}^{2}ds} \right) \\
	& +48{{\left\| w \right\|}^{2}}\left\| {{\beta }_{1}} \right\|_{{{l}^{2}}}^{2}\mathbb{E}\left( \int_{\tau }^{t\wedge {{\tau }_{m}}}{{{e}^{2\eta s}}{{\left\| u\left( s \right) \right\|}^{2}}\left( 1+\mathbb{E}\left( {{\left\| u\left( s \right) \right\|}^{2}} \right) \right)ds} \right) \\
	& +24\left\| w \right\|_{{{L}^{\infty }}\left( \mathbb{R}^{n} \right)}^{2}\left\| {{\gamma }_{1}} \right\|_{{{l}^{2}}}^{2}\mathbb{E}\left( \int_{\tau }^{t\wedge {{\tau }_{m}}}{{{e}^{2\eta s}}{{\left\| u\left( s \right) \right\|}^{4}}ds} \right) \\
	 \le& \frac{1}{3}\beta \eta \int_{\tau }^{t}{{{e}^{2\eta s}}\mathbb{E}\left( {{\left\| u\left( s \right) \right\|}^{4}} \right)ds}+108\beta {{\eta }^{-1}}\int_{\tau }^{t}{{{e}^{2\eta s}}\left\| {{\theta }_{1}}\left( s \right) \right\|_{{{L}^{2}}\left( {{\mathbb{R}}^{n}},{{l}^{2}} \right)}^{4}ds} \\
	& +48\beta {{\left\| w \right\|}^{2}}\left\| {{\beta }_{1}} \right\|_{{{l}^{2}}}^{2}\int_{\tau }^{t}{{{e}^{2\eta s}}\mathbb{E}\left( {{\left\| u\left( s \right) \right\|}^{4}} \right)ds}+48\beta {{\left\| w \right\|}^{2}}\left\| {{\beta }_{1}} \right\|_{{{l}^{2}}}^{2}\int_{\tau }^{t}{{{e}^{2\eta s}}\mathbb{E}\left( {{\left\| u\left( s \right) \right\|}^{2}} \right)ds} \\
	& +24\beta \left\| w \right\|_{{{L}^{\infty }}\left( \mathbb{R}^{n} \right)}^{2}\left\| {{\gamma }_{1}} \right\|_{{{l}^{2}}}^{2}\int_{\tau }^{t\wedge {{\tau }_{m}}}{{{e}^{2\eta s}}\mathbb{E}\left( {{\left\| u\left( s \right) \right\|}^{4}} \right)ds} \\
	 \le& \left( \frac{1}{2}\eta +48{{\left\| w \right\|}^{2}}\left\| {{\beta }_{1}} \right\|_{{{l}^{2}}}^{2}+24\left\| w \right\|_{{{L}^{\infty }}\left( \mathbb{R}^{n} \right)}^{2}\left\| {{\gamma }_{1}} \right\|_{{{l}^{2}}}^{2} \right)\int_{\tau }^{t}{{{e}^{2\eta s}}\mathbb{E}\left( \beta {{\left\| u\left( s \right) \right\|}^{4}} \right)ds} \\
	& +108{{\eta }^{-1}}\int_{\tau }^{t}{{{e}^{2\eta s}}\left( \beta \left\| {{\theta }_{1}}\left( s \right) \right\|_{{{L}^{2}}\left( {{\mathbb{R}}^{n}},{{l}^{2}} \right)}^{4} \right)ds}+1728{{\eta }^{-2}}{{\left\| w \right\|}^{4}}\left\| {{\beta }_{1}} \right\|_{{{l}^{2}}}^{4}{{e}^{2\eta t}}.
\end{split}
\end{align}
It follows from \eqref{d41}-\eqref{d45} that for all $t\ge \tau $,
\begin{align}\label{d46}
	\begin{split}
	& \mathbb{E}\left( {{e}^{2\eta \left( t\wedge {{\tau }_{m}} \right)}}\left( \beta  {{\left\| u\left( t\wedge {{\tau }_{m}} \right) \right\|}^{4}}+\alpha {{\left\| v\left( t\wedge {{\tau }_{m}} \right) \right\|}^{4}} \right) \right) \\
	& +\left( 4\lambda -5\eta  \right)\mathbb{E}\left( \int_{\tau }^{t\wedge {{\tau }_{m}}}{{{e}^{2\eta s}}\left( \beta  {{\left\| u\left( s \right) \right\|}^{4}}+\alpha {{\left\| v\left( s \right) \right\|}^{4}} \right)}ds \right) \\
	\le&  {{e}^{2\eta \tau }}\mathbb{E}\left(  \beta  {{\left\| {{\xi }_{1}} \right\|}^{4}}+\alpha {{\left\| {{\xi }_{2}} \right\|}^{4}}  \right)+12{{\eta }^{-1}}\int_{\tau }^{t}{{{e}^{2\eta s}}\left\| {{\phi }_{1}}\left( s \right) \right\|_{{{L}^{1}}\left( {{\mathbb{R}}^{n}} \right)}^{2}}ds+\frac{3}{2}{{\eta }^{-3}}\int_{\tau }^{t}{{{e}^{\eta s}}{{\left\| {{G}_{2}}\left( s \right) \right\|}^{4}}}ds \\
	& +\frac{3}{2}{{\eta }^{-3}}\left( \int_{\tau }^{t}{{{e}^{2\eta s}}{{\left\| {{\phi }_{g}}\left( s \right) \right\|}^{4}}}ds \right)+1728{{\eta }^{-2}}{{\left\| w \right\|}^{4}}\left\| {{\beta }_{1}} \right\|_{{{l}^{2}}}^{4}{{e}^{2\eta t}} \\
	& +108{{\eta }^{-1}}\int_{\tau }^{t}{{{e}^{2\eta s}}\left( \beta \left\| {{\theta }_{1}}\left( s \right) \right\|_{{{L}^{2}}\left( {{\mathbb{R}}^{n}},{{l}^{2}} \right)}^{4}+\alpha \left\| {{\theta }_{2}}\left( s \right) \right\|_{{{L}^{2}}\left( {{\mathbb{R}}^{n}},{{l}^{2}} \right)}^{4} \right)ds} \\
	& +4\int_{\tau }^{t}{{{e}^{2\eta s}}\left( {{\left\| {{\phi }_{1}}\left( s \right) \right\|}_{{{L}^{\infty }}\left( {{\mathbb{R}}^{n}} \right)}}+{{\left\| {{\psi }_{1}} \right\|}_{{{L}^{1}}\left( {{\mathbb{R}}^{n}} \right)}}+3\|\delta\|^2_{l^2} \right)}\mathbb{E}\left( \beta {{\left\| u\left( s \right) \right\|}^{4}}+\alpha {{\left\| v\left( s \right) \right\|}^{4}} \right)ds \\
	& +2\int_{\tau }^{t}{{{e}^{2\eta s}}\left( 2{{\left\| {{\phi }_{7}}\left( s \right) \right\|}_{{{L}^{\infty }}\left( {{\mathbb{R}}^{n}} \right)}}+{{\left\| {{\psi }_{g}} \right\|}_{{{L}^{\infty }}\left( {{\mathbb{R}}^{n}} \right)}}+{{\left\| {{\psi }_{g}} \right\|}_{{{L}^{1}}\left( {{\mathbb{R}}^{n}} \right)}} \right)\mathbb{E}\left( \beta {{\left\| u\left( s \right) \right\|}^{4}}+\alpha {{\left\| v\left( s \right) \right\|}^{4}} \right)}ds \\
	& +\left( \frac{1}{2}\eta +48{{\left\| w \right\|}^{2}}\left\| {{\beta }_{1}} \right\|_{{{l}^{2}}}^{2}+24\left\| w \right\|_{{{L}^{\infty }}\left( \mathbb{R}^{n} \right)}^{2}\left\| {{\gamma }_{1}} \right\|_{{{l}^{2}}}^{2} \right)\int_{\tau }^{t}{{{e}^{2\eta s}}\mathbb{E}\left( \beta {{\left\| u\left( s \right) \right\|}^{4}}+\alpha {{\left\| v\left( s \right) \right\|}^{4}} \right)ds}.
\end{split}
\end{align}
Taking the limit of \eqref{d46} as $m\to \infty $, by Fatou's lemma and \eqref{c22} we get that for all $t\ge \tau $,
\begin{align}
	\label{d47}
	\begin{split}
 & \mathbb{E}\left( {{e}^{2\eta t}}\left( \beta  {{\left\| u\left( t \right) \right\|}^{4}}+\alpha {{\left\| v\left( t \right) \right\|}^{4}} \right) \right) \\
 \le& {{e}^{2\eta \tau }}\mathbb{E}\left(  \beta  {{\left\| {{\xi }_{1}} \right\|}^{4}}+\alpha {{\left\| {{\xi }_{2}} \right\|}^{4}}  \right)+12{{\eta }^{-1}}\int_{\tau }^{t}{{{e}^{2\eta s}}\left\| {{\phi }_{1}}\left( s \right) \right\|_{{{L}^{1}}\left( {{\mathbb{R}}^{n}} \right)}^{2}}ds+\frac{3}{2}{{\eta }^{-3}}\int_{\tau }^{t}{{{e}^{\eta s}}{{\left\| {{G}_{2}}\left( s \right) \right\|}^{4}}}ds \\
& +\frac{3}{2}{{\eta }^{-3}}\left( \int_{\tau }^{t}{{{e}^{2\eta s}}{{\left\| {{\phi }_{g}}\left( s \right) \right\|}^{4}}}ds \right)+1728{{\eta }^{-2}}{{\left\| w \right\|}^{4}}\left\| {{\beta }_{1}} \right\|_{{{l}^{2}}}^{4}{{e}^{2\eta t}} \\
& +108{{\eta }^{-1}}\int_{\tau }^{t}{{{e}^{2\eta s}}\left( \beta \left\| {{\theta }_{1}}\left( s \right) \right\|_{{{L}^{2}}\left( {{\mathbb{R}}^{n}},{{l}^{2}} \right)}^{4}+\alpha \left\| {{\theta }_{2}}\left( s \right) \right\|_{{{L}^{2}}\left( {{\mathbb{R}}^{n}},{{l}^{2}} \right)}^{4} \right)ds}
.
\end{split}
\end{align}
Replacing $\tau $ and $t$ in \eqref{d47} by $\tau -t$ and $\tau $, respectively, we have for all $t\ge \tau $,
\begin{align*}
	& \mathbb{E}\left( \beta  {{\left\| u\left( t \right) \right\|}^{4}}+\alpha {{\left\| v\left( t \right) \right\|}^{4}} \right) \\
	 \le& {{e}^{-2\eta t}}\mathbb{E}\left(  \beta  {{\left\| {{\xi }_{1}} \right\|}^{4}}+\alpha {{\left\| {{\xi }_{2}} \right\|}^{4}}  \right)+12{{\eta }^{-1}}\int_{\tau -t}^{\tau }{{{e}^{2\eta (s-\tau)}}\left\| {{\phi }_{1}}\left( s \right) \right\|_{{{L}^{1}}\left( {{\mathbb{R}}^{n}} \right)}^{2}}ds+\frac{3}{2}{{\eta }^{-3}}\int_{\tau -t}^{\tau }{{{e}^{\eta (s-\tau)}}{{\left\| {{G}_{2}}\left( s \right) \right\|}^{4}}}ds \\
	& +\frac{3}{2}{{\eta }^{-3}}\int_{\tau -t}^{\tau }{{{e}^{2\eta (s-\tau)}}{{\left\| {{\phi }_{g}}\left( s \right) \right\|}^{4}}}ds+1728{{\eta }^{-2}}{{\left\| w \right\|}^{4}}\left\| {{\beta }_{1}} \right\|_{{{l}^{2}}}^{4} \\
	& +108{{\eta }^{-1}}\int_{\tau -t}^{\tau }{{{e}^{2\eta (s-\tau)}}\left( \beta \left\| {{\theta }_{1}}\left( s \right) \right\|_{{{L}^{2}}\left( {{\mathbb{R}}^{n}},{{l}^{2}} \right)}^{4}+\alpha \left\| {{\theta }_{2}}\left( s \right) \right\|_{{{L}^{2}}\left( {{\mathbb{R}}^{n}},{{l}^{2}} \right)}^{4} \right)ds}
	.
\end{align*}
Note that $\mathcal{L}_{{{\xi }_{0}}}\in D_{2}\left( \tau -t \right)$ and $D_{2}\in {{\mathcal{D}}}$, then we have
\begin{align*}
\underset{t\to \infty }{\mathop{\lim }}\,{{e}^{-2\eta t}}\mathbb{E}\left(  \beta  {{\left\| {{\xi }_{1}} \right\|}^{4}}+\alpha {{\left\| {{\xi }_{2}} \right\|}^{4}}  \right)\le {{e}^{-2\eta \tau }}\underset{t\to \infty }{\mathop{\lim }}\,{{e}^{2\eta \left( \tau -t \right)}}\left\| {{D}_{2}}\left( \tau -t \right) \right\|_{{{\mathcal{P}}_{4}}\left( {\mathbb{L}^{2}}\left( {{\mathbb{R}}^{n}} \right) \right)}^{4}=0,
\end{align*}
and hence there exists $T=T\left( \tau ,D_{2} \right)>0$ such that for all $t\ge T$,
\[{{e}^{-2\eta t}}\mathbb{E}\left(  \beta  {{\left\| {{\xi }_{1}} \right\|}^{4}}+\alpha {{\left\| {{\xi }_{2}} \right\|}^{4}}  \right)\le 1.\]
Further we can get that for all $t\ge T$,
\begin{align*}
	& \mathbb{E}\left( \beta {{\left\| u\left( t \right) \right\|}^{4}}+\alpha {{\left\| v\left( t \right) \right\|}^{4}} \right) \\
	 \le& 1+6{{\eta }^{-1}}\left\| {{\phi }_{1}}\left( s \right) \right\|_{{{L}^{\infty }}\left( \mathbb{R},{{L}^{1}}\left( {{\mathbb{R}}^{n}} \right) \right)}^{2}+\frac{3}{2}{{\eta }^{-3}}\left\| {{\phi }_{1}}\left( s \right) \right\|_{{{L}^{\infty }}\left( \mathbb{R},{{L}^{2}}\left( {{\mathbb{R}}^{n}} \right) \right)}^{2}+1728{{\eta }^{-2}}{{\left\| w \right\|}^{4}}\left\| {{\beta }_{1}} \right\|_{{{l}^{2}}}^{4} \\
	& +\frac{3}{2}{{\eta }^{-3}}\int_{\tau -t}^{\tau }{{{e}^{2\eta s}}{{\left\| {{\phi }_{g}}\left( s \right) \right\|}^{4}}}ds+108{{\eta }^{-1}}\int_{\tau -t}^{\tau }{{{e}^{2\eta s}}\left( \beta \left\| {{\theta }_{1}}\left( s \right) \right\|_{{{L}^{2}}\left( {{\mathbb{R}}^{n}},{{l}^{2}} \right)}^{4}+\alpha \left\| {{\theta }_{2}}\left( s \right) \right\|_{{{L}^{2}}\left( {{\mathbb{R}}^{n}},{{l}^{2}} \right)}^{4} \right)ds},
\end{align*}
which completes the proof.

\end{proof}

\section{Existence of Pullback Measure Attractors  }
In this section, we  stablish the existence and uniqueness of  $\mathcal{D}$-pullback measure attractors of \eqref{c18}-\eqref{c19} in $\mathcal{P}_{4}({{\mathbb{L}}^{2}}\left( \mathbb{R}^{n} \right))$. For this purpose, we first define a non-autonomous dynamical system in $\mathcal{P}_{4}({{\mathbb{L}}^{2}}\left( \mathbb{R}^{n} \right))$.
Given $\tau ,t\in \mathbb{R}$ with $t\ge \tau$, define $P_{\tau ,t}^{*}:{{\mathcal{P}}_{4}}\left( {{\mathbb{L}}^{2}}\left( {{\mathbb{R}}^{n}} \right) \right)\to {{\mathcal{P}}_{4}}\left( {{\mathbb{L}}^{2}}\left( {{\mathbb{R}}^{n}} \right) \right)$, then for every $\mu \in {{\mathcal{P}}_{4}}\left( {{\mathbb{L}}^{2}}\left( {{\mathbb{R}}^{n}} \right) \right)$,
\begin{align}\label{f1}
	\begin{split}
		P_{\tau ,t}^{*}\mu ={{\mathcal{L}}_{k\left( t,\tau ,{{\xi }_{0}} \right)}},
	\end{split}
\end{align}
where ${{k}}\left( t,\tau ,{{\xi }_{0}} \right)$  is the solution of (3.19) with $\xi _{0}\in L_{{{\mathcal{F}}_{\tau }}}^{4}\left( \Omega ,{{\mathbb{L}}^{2}}\left( \mathbb{R}^{n} \right) \right)$ such that ${{\mathcal{L}}_{{{\xi }_{0}}}}=\mu $. Moreover, for every $t\in {{\mathbb{R}}^{+}}$  and $\tau \in \mathbb{R}$, define ${{S}}\left( t,\tau  \right):{{\mathcal{P}}_{4}}({{\mathbb{L}}^{2}}\left( \mathbb{R}^{n} \right))\to {{\mathcal{P}}_{4}}({{\mathbb{L}}^{2}}\left( \mathbb{R}^{n}\right))$ by, for all $\mu\in\mathcal{P}_{4}({{\mathbb{L}}^{2}}\left( \mathbb{R}^{n} \right))$,
\begin{align}\label{f2}
	\begin{split}
		{{S}}\left( t,\tau  \right)\mu =P_{\tau ,\tau +t}^{*}\mu.
	\end{split}
\end{align}
According to the uniqueness of solutions of \eqref{c18}-\eqref{c19}, we can get that for all $t,s\in {{\mathbb{R}}^{+}}$, $\tau \in \mathbb{R}$
and $\xi _{0}\in L_{{{\mathcal{F}}_{\tau }}}^{4}\left( \Omega ,{{\mathbb{L}}^{2}}\left( \mathbb{R}^{n} \right) \right)$,
\[
{{k}}\left( t+s+\tau ,\tau ,{{\xi }_{0}} \right)={{k}}\left( t+s+\tau ,s+\tau ,{{k}}\left( s+\tau ,\tau ,{{\xi }_{0}} \right) \right).
\]
 Further we can get that for all $t,s\in {{\mathbb{R}}^{+}}$, $\tau \in \mathbb{R}$ and $\mu\in\mathcal{P}_{4}({{\mathbb{L}}^{2}}\left( \mathbb{R}^{n} \right))$,
\begin{align}\label{f3}
	\begin{split}
		{{S}}\left( t+s,\tau  \right)\mu ={{S}}\left( t,s+\tau  \right)\left( {{S}}\left( s,\tau  \right)\mu  \right).
	\end{split}
\end{align}
Next we will prove ${{S}}$ is weakly continuous over bounded subsets of $\mathcal{P}_{4}({{\mathbb{L}}^{2}}\left( \mathbb{R}^{n} \right))$, which is devoted to obtain ${{S}}$ is weakly continuous over bounded subsets of $\left( {{\mathcal{P}}_{4}}({{\mathbb{L}}^{2}}\left( \mathbb{R}^{n} \right)),{{d}_{\mathcal{P}({{\mathbb{L}}^{2}}\left( \mathbb{R}^{n} \right))}} \right)$.
\begin{lem}\label{lem6.1}
	Suppose  $\mathbf{(H_{1})-({H_{4}})}$  hold. Let $\xi _{0},\xi _{0,n}\in L_{{{\mathcal{F}}_{\tau }}}^{4}\left( \Omega ,{{\mathbb{L}}^{2}}\left( \mathbb{R}^{n} \right) \right) $ such that $\mathbb{E}\left( \left\| {{\xi }_{0}} \right\|_{{{\mathbb{L}}^{2}}\left( {{\mathbb{R}}^{n}} \right)}^{4} \right)\le K$
	and $\mathbb{E}\left( \left\| {{\xi }_{0,n}} \right\|_{{{\mathbb{L}}^{2}}\left( {{\mathbb{R}}^{n}} \right)}^{4} \right)\le K$ for some $K>0$. If ${{\mathcal{L}}_{{{\xi }_{0}}}}\to {{\mathcal{L}}_{\xi _{0,n}}}$ weakly, then for every $\tau \in \mathbb{R}$ and $t\ge \tau $, ${{\mathcal{L}}_{k\left( t,\tau ,{{\xi }_{0}} \right)}}\to {{\mathcal{L}}_{k\left( t,\tau ,\xi _{0,n} \right)}}$ weakly.
\end{lem}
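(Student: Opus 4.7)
The plan is to reduce weak convergence of the laws $\mathcal{L}_{k(t,\tau,\xi_{0,n})}$ to $L^2$-convergence of solutions driven by coupled initial data, via Skorokhod's representation theorem, a pathwise stability estimate obtained by It\^o's formula, and Vitali's theorem (the three tools singled out in the introduction).

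First I would invoke Skorokhod's representation theorem. Since $\mathbb{L}^2(\mathbb{R}^n)$ is a Polish space and $\mathcal{L}_{\xi_{0,n}}\to\mathcal{L}_{\xi_0}$ weakly, one can construct, on a possibly enlarged filtered probability space that still carries an independent cylindrical Wiener process with the same law as $W$, $\mathcal{F}_{\tau}$-measurable random variables $\tilde\xi_{0,n},\tilde\xi_0$ with $\mathcal{L}_{\tilde\xi_{0,n}}=\mathcal{L}_{\xi_{0,n}}$, $\mathcal{L}_{\tilde\xi_0}=\mathcal{L}_{\xi_0}$, and $\tilde\xi_{0,n}\to\tilde\xi_0$ $\mathbb{P}$-a.s. in $\mathbb{L}^2(\mathbb{R}^n)$. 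Because the law of $k(t,\tau,\cdot)$ depends on its initial data only through the initial law, it suffices to prove that $\tilde k_n(t):=k(t,\tau,\tilde\xi_{0,n})\to \tilde k(t):=k(t,\tau,\tilde\xi_0)$ in $L^2(\Omega,\mathbb{L}^2(\mathbb{R}^n))$; this will give convergence in $\mathbb{W}_2$, and hence weak convergence of the laws.

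Next I would derive a pathwise stability estimate. Writing $\tilde k_n=(\tilde u_n,\tilde v_n)$, $\tilde k=(\tilde u,\tilde v)$, I would apply It\^o's formula to $\beta\|\tilde u_n(t)-\tilde u(t)\|^2+\alpha\|\tilde v_n(t)-\tilde v(t)\|^2$, introducing the stopping times $\tau_m$ exactly as in the proof of Theorem \ref{lemm1}, and mimic the computation \eqref{m6}--\eqref{m11} essentially verbatim: the one-sided Lipschitz bound \eqref{c4} on $f$ handles the dissipative term, \eqref{c3} and \eqref{c8} control the distribution-dependent contributions, and \eqref{c00}, \eqref{c01} handle the diffusion terms. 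The key difference from \eqref{m11} is that here both $\tilde u_n$ and $\tilde u$ satisfy the full McKean--Vlasov equation, so the $\mathbb{W}_2$-terms $\mathbb{W}_2(\mathcal{L}_{\tilde u_n(s)},\mathcal{L}_{\tilde u(s)})$ must be estimated by the elementary coupling bound
\[
\mathbb{W}_2^2\bigl(\mathcal{L}_{\tilde u_n(s)},\mathcal{L}_{\tilde u(s)}\bigr)\le \mathbb{E}\|\tilde u_n(s)-\tilde u(s)\|^2.
\]
Sending $m\to\infty$ via Fatou and applying Gronwall's inequality then yields, for every $t\in[\tau,\tau+T]$,
\[
\mathbb{E}\bigl(\beta\|\tilde u_n(t)-\tilde u(t)\|^2+\alpha\|\tilde v_n(t)-\tilde v(t)\|^2\bigr)\le C_T\, \mathbb{E}\|\tilde\xi_{0,n}-\tilde\xi_0\|_{\mathbb{L}^2(\mathbb{R}^n)}^{2}.
\]

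Finally, I would close the argument with Vitali's theorem. The uniform bound $\mathbb{E}\|\tilde\xi_{0,n}\|_{\mathbb{L}^2}^{4}\le K$ together with $\mathbb{E}\|\tilde\xi_0\|_{\mathbb{L}^2}^{4}\le K$ makes $\{\|\tilde\xi_{0,n}-\tilde\xi_0\|^2\}_{n\ge 1}$ uniformly integrable (de la Vall\'ee-Poussin); combined with the a.s.\ convergence from Step 1, Vitali's theorem gives $\mathbb{E}\|\tilde\xi_{0,n}-\tilde\xi_0\|^2\to 0$, whence $\mathbb{E}\|\tilde k_n(t)-\tilde k(t)\|_{\mathbb{L}^2}^{2}\to 0$ and therefore $\mathcal{L}_{k(t,\tau,\xi_{0,n})}\to\mathcal{L}_{k(t,\tau,\xi_0)}$ weakly, as required.

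The main obstacle is Step 2, i.e.\ producing a valid It\^o-formula computation when $f$ grows polynomially and $u$ lives only in $L^p$: the localisation with $\tau_m$ plus the one-sided bound \eqref{c4} (rather than a global Lipschitz estimate in $u$) is exactly what makes the cancellation of the super-linear term possible, and this is the only nontrivial ingredient of the argument.
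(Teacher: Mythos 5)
Your proposal is correct and follows essentially the same route as the paper's own proof: Skorokhod representation to reduce to a.s.\ convergent initial data, an It\^o-formula stability estimate for the difference $\beta\|u_n-u\|^2+\alpha\|v_n-v\|^2$ using \eqref{c3}--\eqref{c4}, \eqref{c8}, \eqref{c00}--\eqref{c01} and the coupling bound $\mathbb{W}_2^2(\mathcal{L}_{u_n(s)},\mathcal{L}_{u(s)})\le\mathbb{E}\|u_n(s)-u(s)\|^2$, then Gronwall, and finally Vitali's theorem via the uniform fourth-moment bound to upgrade a.s.\ convergence of the initial data to $L^2$ convergence and hence weak convergence of the laws. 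The only cosmetic differences are your explicit use of stopping times and Fatou in the stability step, which the paper omits in this lemma.
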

\begin{proof}
	  Since ${{\mathcal{L}}_{{{\xi }_{0}}}}\to {{\mathcal{L}}_{\xi _{0,n}}}$ weakly, by the Skorokhov theorem,  there exist a probability space $\left( \tilde{\Omega },\tilde{\mathcal{F}},\tilde{\mathbb{P}} \right)$  and random variables ${{\tilde{\xi }_{0}}}$ and $\tilde{\xi }_{0,n}$ defined in $\left( \tilde{\Omega },\tilde{\mathcal{F}},\tilde{\mathbb{P}} \right)$ such that the distributions of
${{\tilde{\xi }_{0}}}$ and $\tilde{\xi }_{0,n}$ coincide with that of ${{{\xi }_{0}}}$ and ${\xi }_{0,n}^{\varepsilon }$, respectively. Furthermore, $\tilde{\xi }_{0,n}\to {{\tilde{\xi }_{0}}}$ \  $\tilde{\mathbb{P}}$ -almost surely. Note that ${{\tilde{\xi }_{0}}}$ , $\tilde{\xi }_{0,n}$ and W can be considered as random variables defined in the product space $\left( \Omega \times \tilde{\Omega },\mathcal{F}\times \tilde{\mathcal{F}},\mathbb{P}\times \tilde{\mathbb{P}} \right)$. So we may consider the solutions of the stochastic equation in the product space with initial data ${{{\xi }_{0}}}$ and ${\xi }_{0,n}$, instead of the solutions in $\left( \Omega ,\mathcal{F},\mathbb{P} \right)$ with initial data ${{\tilde{\xi }_{0}}}$ and $\tilde{\xi }_{0,n}$. However, for simplicity, we will not distinguish the new random variables from the original ones, and just consider the solutions of the equation in the original space. Since $\tilde{\xi }_{0,n}\to {{\tilde{\xi }_{0}}}$ $\left( \mathbb{P}\times \tilde{\mathbb{P}} \right)$ -almost surely, without loss of generality, we simply assume that $\xi _{0,n}\to {{\xi }_{0}}$ $\mathbb{P}$ -almost surely. Let $y_{n}={{u}}\left( t,\tau ,\xi _{1} \right)-u_{n}\left( t,\tau ,\xi _{1,n} \right)$ and $z_{n}={{v}}\left( t,\tau ,\xi _{2} \right)-v_{n}\left( t,\tau ,\xi _{2,n} \right)$.

For the first equation of \eqref{c18} we can get that for all $t\geq\tau$,
\begin{align*}
& d{{y}_{n}}\left( t \right)-\nabla {{y}_{n}}\left( t \right)+\lambda {{y}_{n}}\left( t \right)dt+\alpha {{z}_{n}}\left( t \right)dt+\left( f\left( t,u\left( t \right),{{\mathcal{L}}_{u\left( t \right)}} \right)-f\left( t,{{u}_{n}}\left( t \right),{{\mathcal{L}}_{{{u}_{n}}\left( t \right)}} \right) \right)dt \\
 =&\left( {{G}_{1}}\left( t,u\left( t \right),{{\mathcal{L}}_{u\left( t \right)}} \right)-{{G}_{1}}\left( t,{{u}_{n}}\left( t \right),{{\mathcal{L}}_{{{u}_{n}}\left( t \right)}} \right) \right)dt  +\left( {{\sigma }}\left( t,u\left( t \right),{{\mathcal{L}}_{u\left( t \right)}} \right)-{{\sigma }}\left( t,{{u}_{n}}\left( t \right),{{\mathcal{L}}_{{{u}_{n}}\left( t \right)}} \right) \right)dW\left( t \right).
\end{align*}
For the second equation of \eqref{c18} we can get that for all $t\geq\tau$,
\[d{{z}_{n}}\left( t \right)+\gamma {{z}_{n}}\left( t \right)dt-\beta {{z}_{n}}dt=\sum\limits_{k=1}^{\infty }{{\delta_{k}}{{z}_{n}}\left( t \right)}d{{W}_{k}}\left( t \right).\]
Then from above equations  and by Ito's formula we can get that for all  $t\ge \tau $,
\begin{align}\label{f4}
	\begin{split}
& \left( \beta {{\left\| {{y}_{n}}\left( t \right) \right\|}^{2}}+\alpha {{\left\| {{z}_{n}}\left( t \right) \right\|}^{2}} \right)+2\beta \int_{\tau }^{t}{{{\left\| \nabla {{y}_{n}}\left( s \right) \right\|}^{2}}}ds\\
& +2\beta \lambda \int_{\tau }^{t}{{{\left\| {{y}_{n}}\left( s \right) \right\|}^{2}}}ds+2\alpha \gamma \int_{\tau }^{t}{{{\left\| {{z}_{n}}\left( s \right) \right\|}^{2}}}ds \\
=& \beta {{\left\| {{\xi }_{1}}-{{\xi }_{1,n}} \right\|}^{2}}+\alpha {{\left\| {{\xi }_{2}}-{{\xi }_{2,n}} \right\|}^{2}}-2\beta \int_{\tau }^{t}{\left( f\left( s,u\left( s \right),{{\mathcal{L}}_{u\left( s \right)}} \right)-f\left( s,{{u}_{n}}\left( s \right),{{\mathcal{L}}_{{{u}_{n}}\left( s \right)}} \right) \right)}ds \\
& +2\beta \int_{\tau }^{t}{\left( {{G}_{1}}\left( s,u\left( s \right),{{\mathcal{L}}_{u\left( s \right)}} \right)-{{G}_{1}}\left( s,{{u}_{n}}\left( s \right),{{\mathcal{L}}_{{{u}_{n}}\left( s \right)}} \right),{{y}_{n}}\left( s \right) \right)}ds \\
& +\beta \int_{\tau }^{t}{\left\| {{\sigma }}\left( s,u\left( s \right),{{\mathcal{L}}_{u\left( s \right)}} \right)-{{\sigma }}\left( s,{{u}_{n}}\left( s \right),{{\mathcal{L}}_{{{u}_{n}}\left( s \right)}} \right) \right\|}_{{{L}_{2}}\left( {{l}^{2}},{{L}^{2}}\left( {{\mathbb{R}}^{n}} \right) \right)}^{2}ds \\
& +\alpha \sum\limits_{k=1}^{\infty }{\int_{\tau }^{t}{{{\left\| {\delta_{k}}{{z}_{n}}\left( s \right) \right\|}^{2}}}}ds+2\alpha \sum\limits_{k=1}^{\infty }{\int_{\tau }^{t}{\left( {{\delta }_{k}}{{z}_{n}}\left( s \right),{{z}_{n}}\left( s \right) \right)}dW\left( s \right)}\\
& +2\beta \int_{\tau }^{t}{\left( {{\sigma }}\left( s,u\left( s \right),{{\mathcal{L}}_{u\left( s \right)}} \right)-{{\sigma }}\left( s,{{u}_{n}}\left( s \right),{{\mathcal{L}}_{{{u}_{n}}\left( s \right)}} \right),{{y}_{n}}\left( s \right) \right)}dW\left( s \right).
	\end{split}
\end{align}
For the second term on the right-hand, by \eqref{c3} and \eqref{c4} we can get
\begin{align}\label{f5}
	\begin{split}
	& -2\beta \int_{\tau }^{t}{\left( f\left( s,u\left( s \right),{{\mathcal{L}}_{u\left( s \right)}} \right)-f\left( s,{{u}_{n}}\left( s \right),{{\mathcal{L}}_{{{u}_{n}}\left( s \right)}} \right),{{y}_{n}}\left( s \right) \right)}ds \\
	 =&-2\beta \int_{\tau }^{t}{\int_{{{\mathbb{R}}^{n}}}{\left( f\left( s,u\left( s \right),{{\mathcal{L}}_{u\left( s \right)}} \right)-f\left( s,u\left( s \right),{{\mathcal{L}}_{{{u}_{n}}\left( s \right)}} \right) \right)}{{y}_{n}}\left( s \right)}ds \\
	& -2\beta \int_{\tau }^{t}{\int_{{{\mathbb{R}}^{n}}}{\left( f\left( s,u\left( s \right),{{\mathcal{L}}_{{{u}_{n}}\left( s \right)}} \right)-f\left( s,{{u}_{n}}\left( s \right),{{\mathcal{L}}_{{{u}_{n}}\left( s \right)}} \right) \right)}{{y}_{n}}\left( s \right)ds} \\
	 \le& \beta \int_{\tau }^{t}{\left( \left( 2{{\left\| {{\phi }_{4}}\left( s \right) \right\|}_{{{L}^{\infty }}\left( {{\mathbb{R}}^{n}} \right)}}+{{\left\| {{\phi }_{3}}\left( s \right) \right\|}_{{{L}^{\infty }}\left( {{\mathbb{R}}^{n}} \right)}} \right){{\left\| {{y}_{n}}\left( s \right) \right\|}^{2}}+\mathbb{E}\left( {{\left\| {{y}_{n}}\left( s \right) \right\|}^{2}} \right){{\left\| {{\phi }_{3}}\left( s \right) \right\|}_{{{L}^{1}}\left( {{\mathbb{R}}^{n}} \right)}} \right)ds} .
	 \end{split}
 \end{align}
For the third term on the right-hand, similarly by \eqref{c8}  we can get
\begin{align}\label{f6}
	\begin{split}
		& 2\beta \int_{\tau }^{t}{\left( {{G}_{1}}\left( s,u\left( s \right),{{\mathcal{L}}_{u\left( s \right)}} \right)-{{G}_{1}}\left( s,{{u}_{n}}\left( s \right),{{\mathcal{L}}_{{{u}_{n}}\left( s \right)}} \right),{{y}_{n}}\left( s \right) \right)}ds \\
		 \le &\beta \int_{\tau }^{t}{\left( 3{{\left\| {{\phi }_{7}}\left( s \right) \right\|}_{{{L}^{\infty }}\left( {{\mathbb{R}}^{n}} \right)}}{{\left\| {{y}_{n}}\left( s \right) \right\|}^{2}}+\mathbb{E}\left( {{\left\| {{y}_{n}}\left( s \right) \right\|}^{2}} \right){{\left\| {{\phi }_{7}}\left( s \right) \right\|}_{{{L}^{1}}\left( {{\mathbb{R}}^{n}} \right)}} \right)ds}.
	\end{split}
\end{align}
For the fourth term on the right-hand, by \eqref{c17} we can get
\begin{align}\label{f8}
	\begin{split}
& \beta \int_{\tau }^{t}{\left\| {{\sigma }}\left( s,u\left( s \right),{{\mathcal{L}}_{u\left( s \right)}} \right)-{{\sigma }}\left( s,{{u}_{n}}\left( s \right),{{\mathcal{L}}_{{{u}_{n}}\left( s \right)}} \right) \right\|}_{{{L}_{2}}\left( {{l}^{2}},{{L}^{2}}\left( {{\mathbb{R}}^{n}} \right) \right)}^{2}ds \\
 \le& 2\beta \left\| {{l}_{{{\sigma }_{1}}}} \right\|_{{{l}^{2}}}^{2}\int_{\tau }^{t}{\left( \left\| w \right\|_{{{L}^{\infty }}\left( {\mathbb{R}}^{n} \right)}^{2}{{\left\| {{y}_{n}}\left( s \right) \right\|}^{2}}+{{\left\| w \right\|}^{2}}\mathbb{E}\left( {{\left\| {{y}_{n}}\left( s \right) \right\|}^{2}} \right) \right)ds}.
\end{split}
\end{align}
For the fifth term  on the right-hand, we can get
\begin{align}\label{f7}
	\begin{split}
		& \alpha \sum\limits_{k=1}^{\infty }{\int_{\tau }^{t}{{{\left\| {{\delta }_{k}}{{z}_{n}}\left( s \right) \right\|}^{2}}}}ds
		\le   \alpha \left( \lambda +\left\| \delta  \right\|_{{{l}^{2}}}^{2} \right)\int_{\tau }^{t}{{{\left\| {{z}_{n}}\left( s \right) \right\|}^{2}}ds} .
	\end{split}
\end{align}
It follows from \eqref{f4}-\eqref{f8} we can get that for all  $t\ge \tau $,
\begin{align}\label{f9}
	\begin{split}
	  & \mathbb{E} \left( \beta {{\left\| {{y}_{n}}\left( t \right) \right\|}^{2}}+\alpha {{\left\| {{z}_{n}}\left( t \right) \right\|}^{2}} \right) \\
	  \le& \mathbb{E}\left(\beta {{\left\| {{\xi }_{1}}-{{\xi }_{1,n}} \right\|}^{2}}+\alpha {{\left\| {{\xi }_{2}}-{{\xi }_{2,n}} \right\|}^{2}} \right) \\
	 & +\left( 2{{\left\| {{\phi }_{4}} \right\|}_{{{L}^{\infty }}\left( \mathbb{R},{{L}^{\infty }}\left( {{\mathbb{R}}^{n}} \right) \right)}}+3{{\left\| {{\phi }_{7}} \right\|}_{{{L}^{\infty }}\left( {\mathbb{R}}^{n} \right)}} \right)\int_{\tau }^{t}{\mathbb{E}\left( \beta {{\left\| {{y}_{n}}\left( s \right) \right\|}^{2}}+\alpha {{\left\| {{z}_{n}}\left( s \right) \right\|}^{2}} \right)}ds \\
	 & +\left( {{\left\| {{\phi }_{3}} \right\|}_{{{L}^{\infty }}\left( \mathbb{R},{{L}^{\infty }}\left( {{\mathbb{R}}^{n}} \right)\cap {{L}^{\infty }}\left( {{\mathbb{R}}^{n}} \right) \right)}}+\left\| \delta  \right\|_{{{l}^{2}}}^{2} \right)\int_{\tau }^{t}{\mathbb{E}\left( \beta {{\left\| {{y}_{n}}\left( s \right) \right\|}^{2}}+\alpha {{\left\| {{z}_{n}}\left( s \right) \right\|}^{2}} \right)}ds \\
	 & +2\left\| {{L}_{\sigma }} \right\|_{{{l}^{2}}}^{2}\left( \left\| w \right\|_{{{L}^{\infty }}\left( {\mathbb{R}}^{n} \right)}^{2}+{{\left\| w \right\|}^{2}} \right)\int_{\tau }^{t}{\mathbb{E}\left( \beta {{\left\| {{y}_{n}}\left( s \right) \right\|}^{2}}+\alpha {{\left\| {{z}_{n}}\left( s \right) \right\|}^{2}} \right)}ds .
\end{split}
\end{align}
By \eqref{f9} and Gronwall's lemma, we can get for all $t\geq\tau$,
\[
\mathbb{E}\left( \beta {{\left\| {{y}_{n}}\left( t \right) \right\|}^{2}}+\alpha {{\left\| {{z}_{n}}\left( t \right) \right\|}^{2}} \right)\le\mathbb{E}\left( \alpha {{\left\| {{\xi }_{2}}-{{\xi }_{2,n}} \right\|}^{2}}+\beta {{\left\| {{\xi }_{1}}-{{\xi }_{1,n}} \right\|}^{2}} \right){{e}^{{{c}_{7}}(t-\tau )}}.
\]
Since $\mathbb{E}\left( \left\| {{\xi }_{0,n}} \right\|_{{{\mathbb{L}}^{2}}\left( {{\mathbb{R}}^{n}} \right)}^{4} \right)\le K$, we see that the sequence $\left\{ \xi _{0,n} \right\}_{n=1}^{\infty }$ is uniformly integrable in $L_{{{\mathcal{F}}_{\tau }}}^{2}\left( \Omega ,{{\mathbb{L}}^{2}}\left( {{\mathbb{R}}^{n}} \right) \right)$. Then using the assumption that $\xi _{0,n}\to {{\xi }_{0}}$ $\mathbb{P}$-almost surely, we obtain from Vitali's theorem that $\xi _{0,n}\to {{\xi }_{0}}$ in $L_{{{\mathcal{F}}_{\tau }}}^{2}\left( \Omega ,{{\mathbb{L}}^{2}}\left( {{\mathbb{R}}^{n}} \right) \right)$, which along with (6.9) shows that ${{k}}\left( t,\tau ,\xi _{0,n} \right)\to {{k}}\left( t,\tau ,{{\xi }_{0}} \right)$ in $L_{{{\mathcal{F}}_{\tau }}}^{2}\left( \Omega ,{{\mathbb{L}}^{2}}\left( {{\mathbb{R}}^{n}} \right) \right)$ and hence also in distribution.
\end{proof}
Next we will show that the system \eqref{c18}-\eqref{c19} has a closed  $\mathcal{D}$-pullback absorbing set.
\begin{lem}\label{lem6.2}
	Suppose assumption  $\mathbf{(H_{1})-({H_{4}})}$, \eqref{c21} and \eqref{c25} hold, then   $S$  has a closed  $\mathcal{D}$-pullback absorbing set $B\in \mathcal{D}$ which is given by, for each $\tau\in \mathbb{R}$,
	\begin{align}\label{f10}
		\begin{split}
		B\left( \tau  \right)=\left\{ \mu\in \mathcal{P}_{4}({{\mathbb{L}}^{2}}\left( {{\mathbb{R}}^{n}} \right)):\int_{{{\mathbb{L}}^{2}}\left( {{\mathbb{R}}^{n}} \right)}{{{\left\| \xi  \right\|}^{4}}\mu \left( d\xi  \right)}\le R\left( \tau  \right) \right\},
	\end{split}
\end{align}
	where $R\left( \tau  \right) $  is given by
	\begin{align}
		\label{f11}
		\begin{split}
			R\left( \tau  \right)={{M}_{5}}+{{M}_{5}}\int_{-\infty }^{\tau }{{{e}^{2\eta \left( s-\tau  \right)}}\left( \left\| {{\phi }_{g}}\left( s \right)   \right\|_{{{L}^{2}}\left( \mathbb{R}^{n},l^{2} \right)}^{4}+\left\| {{\theta }_{1}}\left( s \right)   \right\|_{{{L}^{2}}\left( \mathbb{R}^{n},l^{2} \right)}^{4}+\left\| {{\theta }_{2}}\left( s \right)   \right\|_{{{L}^{2}}\left( \mathbb{R}^{n},l^{2} \right)}^{4} \right)}ds,
		\end{split}
	\end{align}
	where  $\eta>0$ is the same number as in  \eqref{c21} and $M_{5}>0$ is the same constant  in Lemma 5.6 which is independent of   $\tau$.
\end{lem}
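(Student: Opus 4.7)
The plan is to verify three defining properties of the family $B = \{B(\tau): \tau \in \mathbb{R}\}$ defined by \eqref{f10}--\eqref{f11}: that $B \in \mathcal{D}$, that $B$ absorbs every $D \in \mathcal{D}$ under the cocycle $S$, and that each $B(\tau)$ is closed in $(\mathcal{P}_{4}(\mathbb{L}^{2}(\mathbb{R}^{n})), d_{\mathcal{P}(\mathbb{L}^{2}(\mathbb{R}^{n}))})$. All three should follow quickly from Lemma \ref{lem4.6} combined with the integrability assumption \eqref{c25}; the statement is essentially a repackaging of that uniform fourth-moment estimate, so no substantial analytic obstacle is anticipated.

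First, to check $B \in \mathcal{D}$, I would expand
\[
e^{2\eta\tau}R(\tau) = M_{5}\,e^{2\eta\tau} + M_{5}\int_{-\infty}^{\tau} e^{2\eta s}\Bigl(\|\phi_{g}(s)\|_{L^{2}(\mathbb{R}^{n})}^{4} + \|\theta_{1}(s)\|_{L^{2}(\mathbb{R}^{n},l^{2})}^{4} + \|\theta_{2}(s)\|_{L^{2}(\mathbb{R}^{n},l^{2})}^{4}\Bigr)\,ds
\]
and control the integral by the pointwise inequality $e^{2\eta s} \le e^{\eta\tau}e^{\eta s}$ for $s \le \tau$. Assumption \eqref{c25} then makes the resulting $e^{\eta s}$-weighted integral uniformly bounded for $\tau$ restricted to any half-line $(-\infty, \tau_{0}]$, so the prefactor $e^{\eta\tau}$ drives the whole expression to $0$ as $\tau \to -\infty$. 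Together with $M_{5}e^{2\eta\tau} \to 0$, this yields precisely \eqref{c23} and hence $B \in \mathcal{D}$.

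Next, for the absorbing property, I would fix $\tau \in \mathbb{R}$ and $D \in \mathcal{D}$. Since $\mathcal{D} \subseteq \mathcal{D}_{0}$, Lemma \ref{lem4.6} supplies a threshold $T = T(\tau, D) > 0$ with the following consequence: for every $t \ge T$ and every $\mu \in D(\tau - t)$, choosing $\xi_{0} \in L^{4}_{\mathcal{F}_{\tau-t}}(\Omega, \mathbb{L}^{2}(\mathbb{R}^{n}))$ with $\mathcal{L}_{\xi_{0}} = \mu$, the definitions \eqref{f1}--\eqref{f2} give
\[
\int_{\mathbb{L}^{2}(\mathbb{R}^{n})}\|\xi\|_{\mathbb{L}^{2}(\mathbb{R}^{n})}^{4}\,d\bigl[S(t,\tau-t)\mu\bigr](\xi) = \mathbb{E}\bigl\|k(\tau,\tau-t,\xi_{0})\bigr\|_{\mathbb{L}^{2}(\mathbb{R}^{n})}^{4} \le R(\tau),
\]
so $S(t,\tau-t)\mu \in B(\tau)$, proving $S(t,\tau-t)D(\tau-t) \subseteq B(\tau)$.

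Finally, closedness of $B(\tau)$ in the weak-topology metric $d_{\mathcal{P}(\mathbb{L}^{2}(\mathbb{R}^{n}))}$ is standard: the functional $\mu \mapsto \int \|\xi\|_{\mathbb{L}^{2}(\mathbb{R}^{n})}^{4}\,d\mu(\xi)$ is lower semicontinuous under weak convergence, by the Portmanteau theorem applied to the continuous nonnegative function $\xi \mapsto \|\xi\|_{\mathbb{L}^{2}(\mathbb{R}^{n})}^{4}$, so the sublevel set $\{\mu: \int \|\xi\|^{4}\,d\mu \le R(\tau)\}$ is closed. The only subtle point worth highlighting is the appearance of the weight $e^{2\eta(s-\tau)}$ in \eqref{f11}, which must be matched to the factor $e^{2\eta\tau}$ in the definition \eqref{c23} of $\mathcal{D}$; this matching, together with the stronger fourth-power integrability \eqref{c25} (as opposed to \eqref{c24}), is exactly what makes $R(\tau)$ a well-behaved radius and places $B$ inside $\mathcal{D}$.
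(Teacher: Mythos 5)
Your proposal is correct and follows essentially the same route as the paper: absorbing via Lemma \ref{lem4.6} together with \eqref{f1}--\eqref{f2}, membership of $B$ in $\mathcal{D}$ by showing $e^{2\eta\tau}R(\tau)\to 0$ using \eqref{c25}, and closedness of each $B(\tau)$. The only difference is that you spell out details the paper leaves implicit (the bound $e^{2\eta s}\le e^{\eta\tau}e^{\eta s}$ for the decay of the weighted integral, and lower semicontinuity of the fourth-moment functional for closedness), which is fine.
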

\begin{proof}
	Obviously $B\left( \tau  \right)$  is a closed subset of $\mathcal{P}_{4}(\mathbb{L}^{2}(\mathbb{R}^{n}))$. By \eqref{f2} and Lemma 5.6, we can get  that for every $\tau \in \mathbb{R}$ and
	$D_{2}\in \mathcal{D}$ there exists $T=T\left( \tau ,D_{2} \right)>0$  such that for  all $t\ge T$,
	\begin{align}
		\label{f12}
		\begin{split}
			{{S}}\left( t,\tau -t \right)D_{2}\left( \tau -t \right)\subseteq B\left( \tau  \right).
		\end{split}
	\end{align}
	Finally, by \eqref{c25} and \eqref{f10} we have, as $\tau \to -\infty$,
	\begin{align*}
		 &{{e}^{2\eta \tau }}\left\| B\left( \tau  \right) \right\|_{{{\mathcal{P}}_{4}}\left( {{\mathbb{L}}^{2}}\left( {{\mathbb{R}}^{n}} \right) \right)}^{4}\\
		 =&{{e}^{2\eta \tau }}{{M}_{6}} +{{M}_{6}}\int_{-\infty }^{\tau }{{{e}^{2\eta s}}\left( \left\| {{\phi }_{g}}\left( s \right) \right\|_{{{L}^{2}}\left( {{\mathbb{R}}^{n}},{{l}^{2}} \right)}^{4}+\left\| {{\theta }_{1}}\left( s \right) \right\|_{{{L}^{2}}\left( {{\mathbb{R}}^{n}},{{l}^{2}} \right)}^{4}+\left\| {{\theta }_{2}}\left( s \right) \right\|_{{{L}^{2}}\left( {{\mathbb{R}}^{n}},{{l}^{2}} \right)}^{4} \right)}ds\to 0,
	\end{align*}
	which shows that $B=\left\{ B\left( \tau  \right):\tau \in \mathbb{R} \right\}\in \mathcal{D}$. Therefore, by \eqref{f12} we find that $B$  as given by \eqref{f10} is a closed $\mathcal{D}$-pullback absorbing set for ${{S}}$.
\end{proof}

Let $\chi^n=\chi_1^n+\chi_2^n$, for $n\in \N$, where
$\{\chi^n\}_{n=1}^\infty$, $\{\chi^n_1\}_{n=1}^\infty$
and $\{\chi^n_2\}_{n=1}^\infty$ are $X$-valued random sequence.
\begin{lem}\cite{LW24}\label{Lch}
Assume  $\{\mathcal L(\chi_1^n)\}_{n=1}^\infty$ are tight and $\E(\|\chi_2^n\|_X^2)$
convergence to zero. Then $\{\mathcal L(\chi^n)\}_{n=1}^\infty$ are also tight.
\end{lem}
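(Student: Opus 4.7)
The plan is to prove tightness of $\{\mathcal{L}(\chi^n)\}_{n=1}^\infty$ by reducing it to sequential weak relative compactness in $\mathcal{P}(X)$. Since $X$ is a separable Banach space, $(\mathcal{P}(X),d_{\mathcal{P}(X)})$ is a Polish space (as recorded in Section 2), so weak relative compactness in $\mathcal{P}(X)$ coincides with sequential relative compactness; by Prokhorov's theorem, the latter is equivalent to tightness. Hence it suffices to show that every subsequence of $\{\mathcal{L}(\chi^n)\}_{n=1}^\infty$ admits a further sub-subsequence that converges weakly.

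Given an arbitrary subsequence, which I relabel as $\{n\}$, I will first invoke the tightness of $\{\mathcal{L}(\chi_1^n)\}_{n=1}^\infty$ together with Prokhorov's theorem to extract a further subsequence (again denoted $\{n\}$) along which $\mathcal{L}(\chi_1^n)\to\mu$ weakly for some $\mu\in\mathcal{P}(X)$. From $\E(\|\chi_2^n\|_X^2)\to 0$ and Chebyshev's inequality, $\chi_2^n\to 0$ in probability in $X$. It then remains to prove a Slutsky-type statement: $\chi_1^n+\chi_2^n$ converges to $\mu$ in distribution.

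For this last step, I fix a closed set $F\subseteq X$ and $\delta>0$, and write $F^\delta=\{x\in X:d(x,F)\le\delta\}$, which is closed in $X$. The elementary inclusion
\[
\{\chi_1^n+\chi_2^n\in F\}\subseteq\{\chi_1^n\in F^\delta\}\cup\{\|\chi_2^n\|_X\ge\delta\}
\]
yields $\mathbb{P}(\chi^n\in F)\le\mathbb{P}(\chi_1^n\in F^\delta)+\mathbb{P}(\|\chi_2^n\|_X\ge\delta)$. Taking $\limsup_n$, the last term vanishes by convergence in probability, while the first is controlled via the Portmanteau theorem by $\mu(F^\delta)$ (since $F^\delta$ is closed). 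Letting $\delta\downarrow 0$ and using continuity of measure from above---$\bigcap_{\delta>0}F^\delta=F$ because $F$ is closed---produces $\limsup_n\mathbb{P}(\chi^n\in F)\le\mu(F)$, which by Portmanteau is exactly weak convergence $\mathcal{L}(\chi^n)\to\mu$. Closing the outer argument, every subsequence then has a further weakly convergent sub-subsequence, yielding the desired tightness.

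The one technical subtlety is the interplay between the weak convergence of $\{\mathcal{L}(\chi_1^n)\}$ and the in-probability convergence of $\chi_2^n$ on a common underlying probability space; this is handled cleanly by the two-term bound above, and since the distance function $x\mapsto d(x,F)$ is continuous, the closedness of $F^\delta$ (and hence the applicability of Portmanteau to $F^\delta$) is automatic. I therefore do not anticipate a serious obstacle beyond arranging these ingredients in the correct order.
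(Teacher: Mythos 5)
Your argument is correct. Note, however, that the paper itself offers no proof to compare against: Lemma \ref{Lch} is quoted from \cite{LW24} without proof, so the only question is whether your reasoning stands on its own, and it does. The reduction of tightness to relative compactness uses both directions of Prokhorov's theorem, which is legitimate here because $X$ is a separable Banach space (hence Polish) and $(\mathcal{P}(X),d_{\mathcal{P}(X)})$ is metrizable, so sequential and topological relative compactness agree; the Chebyshev step $\mathbb{P}(\|\chi_2^n\|_X\ge\delta)\le\delta^{-2}\mathbb{E}(\|\chi_2^n\|_X^2)\to 0$ and the Slutsky-type Portmanteau estimate with the closed enlargements $F^\delta$, followed by $F^\delta\downarrow F$, are all sound. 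For comparison, the more customary (and likely the cited) proof is direct and avoids Prokhorov entirely: it uses the characterization that, on a complete separable metric space, a family is tight if and only if for every $\varepsilon,\delta>0$ there is a finite union of $\delta$-balls carrying mass at least $1-\varepsilon$ uniformly; one covers a compact set for $\{\chi_1^n\}$ by finitely many $\delta/2$-balls, enlarges their radii to $\delta$ to absorb $\{\|\chi_2^n\|_X<\delta/2\}$, handles the finitely many small $n$ by tightness of individual laws, and concludes. That route is more elementary and quantitative (it produces the approximating sets explicitly), whereas yours is shorter at the price of invoking the converse Prokhorov theorem; both ultimately rely on completeness and separability of $X$, so neither is more general in this setting.
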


Given $\tau \in \mathbb{R}$, $D_{2}\in {{\mathcal{D}}}$, ${{t}_{n}}\to \infty $ and ${{\mu }_{n}}\in D_{2}\left( \tau -{{t}_{n}} \right)$, hereafter we will prove the sequence $\left\{ {{S}}\left( {{t}_{n}},\tau -{{t}_{n}} \right){{\mu }_{n}} \right\}_{n=1}^{\infty }$ has a convergent subsequence in $(\mathcal{P}_{4}({{\mathbb{L}}^{2}}\left( {{\mathbb{R}}^{n}} \right)),d_{\mathcal{P}({{\mathbb{L}}^{2}}\left( {{\mathbb{R}}^{n}} \right))})$.
\begin{lem}\label{Tem5.3}
	Suppose assumption  $(H_{1})$-$(H_{4})$,  \eqref{c21} and \eqref{c24}-\eqref{c25} hold. Then $S$ is $\mathcal{D}$-pullback asymptotically compact in $(\mathcal{P}_{4}({{\mathbb{L}}^{2}}\left( {{\mathbb{R}}^{n}} \right)),d_{\mathcal{P}({{\mathbb{L}}^{2}}\left( {{\mathbb{R}}^{n}} \right))})$.
\end{lem}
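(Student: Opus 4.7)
The plan is to combine the a priori estimates in Section~5 (uniform $\mathbb{L}^2$ bounds, $H^1$ regularity, tail decay, and the fourth-moment bound) with the splitting $v=v_1+v_2$ of \eqref{d17}-\eqref{d18}, and then invoke Prokhorov's theorem together with Lemma~\ref{Lch} to produce a convergent subsequence in the $d_{\mathcal{P}(\mathbb{L}^2(\mathbb{R}^n))}$ metric whose limit still lies in $\mathcal{P}_4(\mathbb{L}^2(\mathbb{R}^n))$.

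Fix $\tau\in\mathbb{R}$, $D_2\in\mathcal{D}$, $t_n\to\infty$, and $\mu_n\in D_2(\tau-t_n)$. For each $n$, pick $\xi_{0,n}=(\xi_{1,n},\xi_{2,n})\in L^4_{\mathcal{F}_{\tau-t_n}}(\Omega,\mathbb{L}^2(\mathbb{R}^n))$ with $\mathcal{L}(\xi_{0,n})=\mu_n$, so that $S(t_n,\tau-t_n)\mu_n=\mathcal{L}(k(\tau,\tau-t_n,\xi_{0,n}))$ with $k=(u,v)$. Write $v=v_1+v_2$ as in \eqref{d17}-\eqref{d18}; by \eqref{d19}, $\mathbb{E}\|v_1(\tau,\tau-t_n,\xi_{2,n})\|^2\to 0$ as $n\to\infty$.

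First, I will establish tightness of $\{\mathcal{L}(u(\tau,\tau-t_n,\xi_{0,n}),v_2(\tau,\tau-t_n,0))\}$ in $\mathbb{L}^2(\mathbb{R}^n)$. Lemmas~\ref{lem4.3} and~\ref{lem4.4} give a uniform bound on $\mathbb{E}\|(u,v_2)\|_{\mathbb{H}^1(\mathbb{R}^n)}^2$. For each $R>0$ the restrictions to the ball $\{|x|\le R\}$ are therefore uniformly bounded in $\mathbb{H}^1(\{|x|\le R\})$, which embeds compactly into $\mathbb{L}^2(\{|x|\le R\})$ by Rellich-Kondrachov. For the tail, Lemma~\ref{lem4.5} controls $\mathbb{E}\int_{|x|\ge\sqrt{2}n}(|u|^2+|v|^2)\,dx$; combining this with $|v_2|^2\le 2|v|^2+2|v_1|^2$ and the fact that $\mathbb{E}\|v_1\|^2\to 0$ dominates the tail of $v_1$ uniformly, the tail of $v_2$ is uniformly small as well. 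Markov's inequality then yields tightness of $\{\mathcal{L}(u,v_2)\}$ in $\mathbb{L}^2(\mathbb{R}^n)$ via the standard criterion (bounded $H^1$-norm on compacts plus uniformly small tails).

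Second, decompose $k=(u,v)=(u,v_2)+(0,v_1)$. Since $\{\mathcal{L}(u,v_2)\}$ is tight and $\mathbb{E}\|(0,v_1)\|_{\mathbb{L}^2}^2\to 0$, Lemma~\ref{Lch} yields tightness of $\{\mathcal{L}(k(\tau,\tau-t_n,\xi_{0,n}))\}$ in $\mathcal{P}(\mathbb{L}^2(\mathbb{R}^n))$. Prokhorov's theorem produces a subsequence converging weakly to some $\mu_*\in\mathcal{P}(\mathbb{L}^2(\mathbb{R}^n))$, which is exactly convergence in $d_{\mathcal{P}(\mathbb{L}^2(\mathbb{R}^n))}$. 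Since $\xi\mapsto\|\xi\|_{\mathbb{L}^2}^4$ is lower semicontinuous and nonnegative, Lemma~\ref{lem4.6} together with Fatou's lemma gives $\int\|\xi\|_{\mathbb{L}^2}^4\,d\mu_*(\xi)<\infty$, so $\mu_*\in\mathcal{P}_4(\mathbb{L}^2(\mathbb{R}^n))$, completing the proof.

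The main obstacle is transferring the tail control from $v$ to $v_2$, since Lemma~\ref{lem4.5} is stated only for $v$ itself; this is circumvented via $v_2=v-v_1$ and the $L^2$-decay of $v_1$ established in \eqref{d19}. A secondary subtlety is that all the estimates in Section~5 are in expectation, so tightness of the laws (rather than almost-sure compactness of the paths) must be obtained through Markov's inequality applied to the bounded $H^1$-norm and to the tail integral.
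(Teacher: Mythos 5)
Your proposal is correct and rests on the same ingredients as the paper (the splitting $v=v_1+v_2$ from \eqref{d17}--\eqref{d18} with \eqref{d19}, the $H^1$ estimates of Lemmas \ref{lem4.3}--\ref{lem4.4}, the tail estimate of Lemma \ref{lem4.5}, Lemma \ref{Lch}, and the fourth-moment bound of Lemma \ref{lem4.6}), but it assembles them differently. The paper never produces a tail estimate for $v_2$: it first multiplies by the cut-off, so that $(1-\rho_m)k_2=(1-\rho_m)(u,v_2)$ is compactly supported and $H^1$-bounded in expectation, hence its laws are tight for each fixed $m$ by Chebyshev and Rellich; Lemma \ref{Lch} then gives tightness of $\{\mathcal L_{(1-\rho_m)k}\}$, and tightness of the untruncated laws $\{\mathcal L_{k}\}$ is recovered afterwards by an $\varepsilon$-net approximation in $d_{\mathcal P}$, comparing $\mathcal L_{k}$ with $\mathcal L_{(1-\rho_{m_0})k}$ via the tail Lemma \ref{lem4.5} (estimates \eqref{f22}--\eqref{f27}); finally $\varpi\in\mathcal P_4$ is obtained from the weak closedness of the absorbing set of Lemma \ref{lem6.2}. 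You instead transfer the tail control from $v$ to $v_2$ through $|v_2|^2\le 2|v|^2+2|v_1|^2$ and \eqref{d19}, apply the standard ``uniform $H^1$ moment plus uniformly small tails implies tightness in $L^2(\mathbb R^n)$'' criterion directly to $(u,v_2)$, use Lemma \ref{Lch} once for the whole solution, and settle $\mu_*\in\mathcal P_4$ by lower semicontinuity of $\|\cdot\|^4$ and Lemma \ref{lem4.6}; the last step is in substance the proof that $B(\tau)$ is weakly closed, so the two arguments are equivalent there. Your route avoids the paper's two-stage truncation/$\varepsilon$-net argument at the price of needing the tail estimate for $v_2$, which you obtain correctly. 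One small point to make explicit when writing it up: the bounds of Lemmas \ref{lem4.3}--\ref{lem4.5} hold only for $t_n$ beyond thresholds depending on the prescribed accuracy, so when you form a single compact set per $\varepsilon$ you should either discard the finitely many exceptional indices (each individual law on the Polish space $\mathbb L^2(\mathbb R^n)$ is tight) or enlarge the radii to cover them; this is routine and does not affect the validity of the argument.
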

\begin{proof}
Denote ${{\xi }_{0,n}}\in ~L_{{{\mathcal{F}}_{\tau -{{t}_{n}}}}}^{4}\left( \Omega ,{{\mathbb{L}}^{2}}\left( {{\mathbb{R}}^{n}} \right) \right)$ with ${{\mathcal{L}}_{{{\xi }_{0,n}}}}={{\mu }_{n}}$, we consider the solution $k\left( \tau ,\tau -{{t}_{n}},{{\xi }_{0,n}} \right)$  of \eqref{c18}-\eqref{c19} with initial data ${{\xi }_{0,n}}$  at initial time $\tau -{{t}_{n}}$. We first prove the distributions of  $\left\{ k\left( \tau ,\tau -{{t}_{n}},{{\xi }_{0,n}} \right) \right\}_{n=1}^{\infty }$ are tight in ${{\mathbb{L}}^{2}}\left({{\mathbb{R}}^{n}} \right)$.
	
	 Let  $\rho :{{\mathbb{R}}^{n}}\to [0,1]$  be the smooth cut-off function given by \eqref{d25}, and ${{\rho }_{m}}\left( x \right)=\rho \left( \frac{x}{m} \right)$ for every $m\in \mathbb{N}$ and $x\in {{{L}}^{2}}\left( {{\mathbb{R}}^{n}} \right)$. Then the solution $k$  can be decomposed as
	 \[
	 k\left( \tau ,\tau -{{t}_{n}},{{\xi }_{0,n}} \right)={{\rho }_{m}}k\left( \tau ,\tau -{{t}_{n}},{{\xi }_{0,n}} \right)+\left( 1-{{\rho }_{m}} \right)k\left( \tau ,\tau -{{t}_{n}},{{\xi }_{0,n}} \right).
	 \]
Further we depose the $\left( 1-{{\rho }_{m}} \right)k\left( \tau ,\tau -{{t}_{n}},{{\xi }_{0,n}} \right)$ as
 \begin{align} \label{f13}
	\begin{split}
\left( 1-{{\rho }_{m}} \right)k\left( \tau ,\tau -{{t}_{n}},{{\xi }_{0,n}} \right)=\left( 1-{{\rho }_{m}} \right){{k}_{1}}\left( \tau ,\tau -{{t}_{n}},{{\xi }_{0,n}} \right)+\left( 1-{{\rho }_{m}} \right){{k}_{2}}\left( \tau ,\tau -{{t}_{n}},{{\xi }_{0,n}} \right),
\end{split}
\end{align}
where
\begin{align} \label{f14}
	\begin{split}
\left( 1-{{\rho }_{m}} \right){{k}_{1}}\left( \tau ,\tau -{{t}_{n}},{{\xi }_{0,n}} \right)=\left( 1-{{\rho }_{m}} \right)\left( 0,{{v}_{1}}\left( \tau ,\tau -{{t}_{n}},{{\xi }_{0,n}} \right) \right),
\end{split}
\end{align}
and
\begin{align} \label{f15}
	\begin{split}
\left( 1-{{\rho }_{m}} \right){{k}_{2}}\left( \tau ,\tau -{{t}_{n}},{{\xi }_{0,n}} \right)=\left( 1-{{\rho }_{m}} \right)\left( u\left( \tau ,\tau -{{t}_{n}},{{\xi }_{0,n}} \right),{{v}_{2}}\left( \tau ,\tau -{{t}_{n}},0 \right) \right).
\end{split}
\end{align}
By \eqref{d19} we can get
\begin{align} \label{f16}
	\begin{split}
\mathbb{E}\left( {{\left\| \left( 1-{{\rho }_{m}} \right){{k}_{1}}\left( \tau ,\tau -{{t}_{n}},{{\xi }_{0,n}} \right) \right\|}^{2}} \right)\to 0,\quad\text{as}\ n\to \infty.
\end{split}
\end{align}
Then from \eqref{f13}-\eqref{f16} and  Lemma \ref{Lch} we can get
 $\left\{ {{\mathcal{L}}_{\left( 1-{{\rho }_{m}} \right)k\left( \tau ,\tau -{{t}_{n}},{{\xi }_{0,n}} \right)}} \right\}_{n=1}^{\infty }$  is tight as along as  $\left\{ {{\mathcal{L}}_{\left( 1-{{\rho }_{m}} \right){{k}_{2}}\left( \tau ,\tau -{{t}_{n}},{{\xi }_{0,n}} \right)}} \right\}_{n=1}^{\infty }$ is tight. By Lemma 5.3 and Lemma 5.4 we can get that for every $\tau\in\mathbb{R}$ and ${{D}_{2}}=\left\{ {{D}_{2}}\left( t \right):t\in \mathbb{R} \right\}\in \mathcal{D}$ there exist $N_{5}=N_{5}\left( \tau ,{{D}_{2}} \right)>0$ such that for all $n\ge N_{5}$,
 \begin{align} \label{f17}
 	\begin{split}
 \mathbb{E}\left( \left\| u\left( \tau ,\tau -{{t}_{n}},{{\xi }_{1,n}} \right) \right\|_{{{H}^{1}}\left( {{\mathbb{R}}^{n}} \right)}^{2}+\left\| {{v}_{2}}\left( \tau ,\tau -{{t}_{n}},0 \right) \right\|_{{{H}^{1}}\left( {{\mathbb{R}}^{n}} \right)}^{2} \right)\le {{c}_{8}}.
\end{split}
\end{align}
By \eqref{f17} we can get that for all $m\in\mathbb{N}$ and $n\geq N_{5}$,
\begin{align} \label{f18}
	\begin{split}
\mathbb{E}\left( \left\| \left( 1-{{\rho }_{m}} \right)k_{2}\left( \tau ,\tau -{{t}_{n}},{{\xi }_{0,n}} \right) \right\|_{{{\mathbb{H}}^{1}}\left( {{\mathbb{R}}^{n}} \right)}^{2} \right)\le {{c}_{9}}.
\end{split}
\end{align}
By \eqref{f18} and Chebyshev's inequality we can get that there exists $R(\epsilon,\tau)>0$ such that for all $m\in\mathbb{N}$ and $n\geq N_{5}$,
\begin{align} \label{f19}
	\begin{split}
\mathbb{P}\left( \left\{ {{\left\| \left( 1-{{\rho }_{m}} \right)k_{2}\left( \tau ,\tau -{{t}_{n}},{{\xi }_{0,n}} \right) \right\|}_{{{\mathbb{H}}^{1}}\left( {{\mathbb{R}}^{n}} \right)}}>R(\epsilon,\tau) \right\} \right)\le \epsilon.
\end{split}
\end{align}
	Denote
	\[{{Z}_{\epsilon }}=\left\{ k_{2}\in {{\mathbb{H}}^{1}}\left( {{\mathbb{R}}^{n}} \right):{{\left\|  k_{2} \right\|}_{{{\mathbb{H}}^{1}}\left( {{\mathbb{R}}^{n}} \right)}}\le R\left( \epsilon ,\tau  \right); k_{2}\left( x \right)=0\ \text{for a}\text{.e}\text{.}\ \left| x \right|>m \right\}.\]
	 Then ${{Z}_{\epsilon }}$ is a compact subset of ${{\mathbb{L}}^{2}}\left( {{\mathbb{R}}^{n}} \right)$, by \eqref{f19} we can get that for all $m\in \mathbb{N}$ and $n\ge {{N}_{5}}$,
	 \begin{align} \label{f20}
	 	\begin{split}
	 		\mathbb{P}\left( \left\{ {{\left\| \left( 1-{{\rho }_{m}} \right)k_{2}\left( \tau ,\tau -{{t}_{n}},\xi_{0,n} \right) \right\|}_{{\mathbb{H}^{1}}\left( {{\mathbb{R}}^{n}} \right)}}\in {{Z}_{\epsilon }} \right\} \right)>1-\epsilon.
	 	\end{split}
	 \end{align}
	Note that $\delta >0$ is arbitrary, by \eqref{f20} we can get that for all $m\in \mathbb{N}$ and $n\ge {{N}_{5}}$,
	\begin{align} \label{f21}
		\begin{split}
			\left\{ {{\mathcal{L}}_{\left( 1-{{\rho }_{m}} \right)k_{2}\left( \tau ,\tau -{{t}_{n}},\xi_{0,n} \right)}} \right\}_{n=1}^{\infty } \quad\text{is tight in} \quad{{\mathbb{L}}^{2}}\left( {{\mathbb{R}}^{n}} \right).
		\end{split}
	\end{align}
	Further we can get that the sequence $\left\{ {{\mathcal{L}}_{\left( 1-{{\rho }_{m}} \right)k\left( \tau ,\tau -{{t}_{n}},{{\xi }_{0,n}} \right)}} \right\}_{n=1}^{\infty }$  is tight in ${{\mathbb{L}}^{2}}\left( {{\mathbb{R}}^{n}} \right)$. Next we will use the uniform tail-estimates to prove  $\left\{ {{\mathcal{L}}_{k\left( \tau ,\tau -{{t}_{n}},{{\xi }_{0,n}} \right)}} \right\}_{n=1}^{\infty } $ also is tight in ${{\mathbb{L}}^{2}}\left( {{\mathbb{R}}^{n}} \right)$. According to Lemma 5.4 we can get that for every $\varepsilon >0$, there exists ${{N}_{6}}={{N}_{6}}\left( \varepsilon ,\tau ,D_{2} \right)\in \mathbb{N}$ and  ${{m}_{0}}={{m}_{0}}\left( \varepsilon ,\tau  \right)\in \mathbb{N}$ such that for all $n\ge {{N}_{6}}$,
		\begin{align} \label{f22}
		\begin{split}
			\mathbb{E}\left( \int_{\left| x \right|>{{m}_{0}}}{{{\left\vert k\left( \tau ,\tau -{{t}_{n}},{{\xi }_{0,n}} \right)\left( x \right) \right\vert}^{2}}dx} \right)<\frac{1}{9}{{\varepsilon }^{2}}.
		\end{split}
	\end{align}
	By \eqref{f22} we can get that for all $m\in \mathbb{N}$ and $n\ge {{N}_{6}}$,
	\begin{align} \label{f23}
		\begin{split}
			\mathbb{E}\left( {{\left\| {{\rho }_{{{m}_{0}}}}k\left( \tau ,\tau -{{t}_{n}},{{\xi }_{0,n}} \right) \right\|}^{2}} \right)<\frac{1}{9}{{\varepsilon }^{2}}.
		\end{split}
	\end{align}
	By \eqref{f23} we can see that $\left\{ {{\mathcal{L}}_{\left( 1-{{\rho }_{{{m}_{0}}}} \right)k\left( \tau ,\tau -{{t}_{n}},{{\xi }_{0,n}} \right)}} \right\}_{n={{N}_{6}}}^{\infty }$ is tight in ${{\mathbb{L}}^{2}}\left( {\mathbb{R}}^{n} \right)$, and hence there exist ${{n}_{1}},\cdots {{n}_{l}}\ge {{N}_{6}}$ such that
	\begin{align} \label{f24}
		\begin{split}
		 \left\{ {{\mathcal{L}}_{\left( 1-{{\rho }_{{{m}_{0}}}} \right)k\left( \tau ,\tau -{{t}_{n}},{{\xi }_{0,n}} \right)}} \right\}_{n={{N}_{9}}}^{\infty }\subseteq \bigcup\limits_{j=1}^{l}{B\left( {{\mathcal{L}}_{\left( 1-{{\rho }_{{{m}_{0}}}} \right)k\left( \tau ,\tau -{{t}_{{{n}_{j}}}},{{\xi }_{0,n_{j}}} \right)}},\frac{1}{3}\varepsilon  \right)},
		\end{split}
	\end{align}
	where B $\left( {{\mathcal{L}}_{\left( 1-{{\rho }_{{{m}_{0}}}} \right)k\left( \tau ,\tau -{{t}_{{{n}_{j}}}},{{\xi }_{0,n_{j}}} \right)}},\frac{1}{3}\varepsilon  \right)$ is the $\frac{1}{3}\varepsilon$ -neighborhood of ${{\mathcal{L}}_{\left( 1-{{\rho }_{{{m}_{0}}}} \right)k\left( \tau ,\tau -{{t}_{{{n}_{j}}}},{{\xi }_{0,n_{j}}} \right)}}$ in the space $\left( {{\mathcal{P}}_{4}}({{\mathbb{L}}^{2}}\left({\mathbb{R}}^{n} \right)),{{d}_{\mathcal{P}({{\mathbb{L}}^{2}}\left( {\mathbb{R}}^{n} \right))}} \right)$. Given $n\ge {{N}_{6}}$, by \eqref{f24} we can get that there exist $j\in \left\{ 1,2,\cdots ,l \right\}$ such that
	\begin{align} \label{f25}
		\begin{split}
			{{\mathcal{L}}_{\left( 1-{{\rho }_{{{m}_{0}}}} \right)k\left( \tau ,\tau -{{t}_{n}},{{\xi }_{0,n}} \right)}}\subseteq B\left( {{\mathcal{L}}_{\left( 1-{{\rho }_{{{m}_{0}}}} \right)k\left( \tau ,\tau -{{t}_{{{n}_{j}}}},{{\xi }_{0,n_{j}}} \right)}},\frac{1}{3}\varepsilon  \right).
		\end{split}
	\end{align}
	By \eqref{f23} and \eqref{f25} we can get that
	\begin{align}\label{f26}
		\begin{split}
		& {{d}_{\mathcal{P}\left( Z \right)}}\left( {{\mathcal{L}}_{k\left( \tau ,\tau -{{t}_{n}},{{\xi }_{0,n}} \right)}},{{\mathcal{L}}_{k\left( \tau ,\tau -{{t}_{{{n}_{j}}}},{{\xi }_{0,n_{j}}} \right)}} \right)\\
		=&\underset{\psi \in {{L}_{b}}\left( Z \right),{{\left\| \psi  \right\|}_{{{L}_{b}}}}\le 1}{\mathop{\sup }}\,\left\vert \int_{\mathbb{R}^{n}}{\psi d}{{\mathcal{L}}_{k\left( \tau ,\tau -{{t}_{n}},{{\xi }_{0,n}} \right)}}-\int_{\mathbb{R}^{n}}{\psi d}{{\mathcal{L}}_{k\left( \tau ,\tau -{{t}_{{{n}_{j}}}},{{\xi }_{0,n_{j}}} \right)}} \right\vert \\
		 =&\underset{\psi \in {{L}_{b}}\left( Z \right),{{\left\| \psi  \right\|}_{{{L}_{b}}}}\le 1}{\mathop{\sup }}\,\left\vert \mathbb{E}\left( \psi \left( k\left( \tau ,\tau -{{t}_{n}},{{\xi }_{0,n}} \right) \right) \right)-\mathbb{E}\left( \psi \left( k\left( \tau ,\tau -{{t}_{n}},{{\xi }_{0,n_{j}}} \right) \right) \right) \right\vert \\
		 \le& \underset{\psi \in {{L}_{b}}\left( Z \right),{{\left\| \psi  \right\|}_{{{L}_{b}}}}\le 1}{\mathop{\sup }}\,\left\vert \mathbb{E}\left( \psi \left( k\left( \tau ,\tau -{{t}_{n}},{{\xi }_{0,n}} \right) \right) \right)-\mathbb{E}\left( \psi \left( 1-{{\rho }_{{{m}_{0}}}} \right)k\left( \tau ,\tau -{{t}_{n}},{{\xi }_{0,n}} \right) \right) \right\vert \\
		& +\underset{\psi \in {{L}_{b}}\left( Z \right),{{\left\| \psi  \right\|}_{{{L}_{b}}}}\le 1}{\mathop{\sup }}\,\left\vert \mathbb{E}\left( \psi \left( 1-{{\rho }_{{{m}_{0}}}} \right)k\left( \tau ,\tau -{{t}_{n}},{{\xi }_{0,n}} \right) \right)-\mathbb{E}\left( \psi \left( 1-{{\rho }_{{{m}_{0}}}} \right)k\left( \tau ,\tau -{{t}_{{{n}_{j}}}},{{\xi }_{0,n_{j}}} \right) \right) \right\vert \\
		& +\underset{\psi \in {{L}_{b}}\left( Z \right),{{\left\| \psi  \right\|}_{{{L}_{b}}}}\le 1}{\mathop{\sup }}\,\left\vert \mathbb{E}\left( \psi \left( 1-{{\rho }_{{{m}_{0}}}} \right)k\left( \tau ,\tau -{{t}_{{{n}_{j}}}},{{\xi }_{0,n_{j}}} \right) \right)-\mathbb{E}\left( \psi \left( k\left( \tau ,\tau -{{t}_{{{n}_{j}}}},{{\xi }_{0,n_{j}}} \right) \right) \right) \right\vert \\
		 \le& \mathbb{E}\left( \left\| {{\rho }_{{{m}_{0}}}}k\left( \tau ,\tau -{{t}_{n}},{{\xi }_{0,n}} \right) \right\| \right)+\mathbb{E}\left( \left\| {{\rho }_{{{m}_{0}}}}k\left( \tau ,\tau -{{t}_{{{n}_{j}}}},{{\xi }_{0,n_{j}}} \right) \right\| \right) \\
		& +{{d}_{\mathcal{P}\left( Z \right)}}\left( {{\mathcal{L}}_{\left( 1-{{\rho }_{{{m}_{0}}}} \right)k\left( \tau ,\tau -{{t}_{n}},{{\xi }_{0,n}} \right)}},{{\mathcal{L}}_{\left( 1-{{\rho }_{{{m}_{0}}}} \right)k\left( \tau ,\tau -{{t}_{{{n}_{j}}}},{{\xi }_{0,n_{j}}} \right)}} \right) \\
		 <&\frac{1}{3}\varepsilon +\frac{1}{3}\varepsilon +\frac{1}{3}\varepsilon =\varepsilon.
		 \end{split}
	\end{align}
	Consequently we can get
	\begin{align} \label{f27}
		\begin{split}
			\left\{ {{\mathcal{L}}_{\left( 1-{{\rho }_{{{m}_{0}}}} \right)k\left( \tau ,\tau -{{t}_{n}},{{\xi }_{0,n}} \right)}} \right\}_{n={{N}_{6}}}^{\infty }\subseteq \bigcup\limits_{j=1}^{l}{B\left( {{\mathcal{L}}_{\left( 1-{{\rho }_{{{m}_{0}}}} \right)k\left( \tau ,\tau -{{t}_{{{n}_{j}}}},{{\xi }_{0,n_{j}}} \right)}},\varepsilon  \right)}.
		\end{split}
	\end{align}
	 Since $\varepsilon >0$  is arbitrary, by \eqref{f27} we see that the sequence $\left\{ {{\mathcal{L}}_{k\left( \tau ,\tau -{{t}_{n}},{{\xi }_{0,n}} \right)}} \right\}_{n=1}^{\infty }$ is tight in $\mathcal{P}({{\mathbb{L}}^{2}}\left( {\mathbb{R}}^{n} \right))$, which implies that there exists $\varpi \in \mathcal{P}({{\mathbb{L}}^{2}}\left( {\mathbb{R}}^{n} \right))$ such that, up to a subsequence,
		\begin{align} \label{f28}
		\begin{split}
			{{\mathcal{L}}_{k\left( \tau ,\tau -{{t}_{n}},{{\xi }_{0,n}} \right)}}\to \varpi\quad\text{weakly}.
		\end{split}
	\end{align}
	It remains to show $\varpi \in \mathcal{P}({{\mathbb{L}}^{2}}\left( {\mathbb{R}}^{n} \right))$. Let $B=\{B\left( \tau  \right):\tau \in \mathbb{R}\}$ be the closed ${{\mathcal{D}}}$-pullback absorbing set of ${{S}}$  given by \eqref{f10}. Then there exists ${{N}_{7}}={{N}_{7}}\left( \tau ,D_{2} \right)\in \mathbb{N}$ such that for all $n\ge {{N}_{7}}$,
	 \begin{align} \label{f29}
	 	\begin{split}
	 		{{\mathcal{L}}_{k\left( \tau ,\tau -{{t}_{n}},{{\xi }_{0,n}} \right)}}\in B\left( \tau  \right).
	 	\end{split}
	 \end{align}
	Since $B\left( \tau  \right)$  is closed with respect to the weak topology of $\mathcal{P}({{\mathbb{L}}^{2}}\left( \mathbb{R}^{n} \right))$, by \eqref{f28}-\eqref{f29} we obtain $\varpi \in B\left( \tau  \right)$ and thus $\varpi \in {{\mathcal{P}}_{4}}({{\mathbb{L}}^{2}}\left( \mathbb{R}^{n} \right))$. This completes the proof.
\end{proof}
 We now prove the existence and uniqueness of ${{\mathcal{D}}}$-pullback measure attractors of ${{S}}$.
\begin{thm}
	If $\mathbf{(H1)-(H4)}$, \eqref{c21} and \eqref{c24}-\eqref{c25} hold, then ${{S}}$ associated with \eqref{c18}-\eqref{c19} has a unique ${{\mathcal{D}}}$-pullback measure attractor $\mathcal{A}=\{\mathcal{A}\left( \tau  \right):\tau \in \mathbb{R}\}\in {{\mathcal{D}}}$ in $\mathcal{P}_{4}\left( \mathbb{L}^{2}(\mathbb{R}^{n})\right)$ such that for all $\tau\in\mathbb{R}$,
	\begin{align*}
		 \mathcal{A}\left( \tau  \right)=k\left( B,\tau  \right)&=\left\{ \psi \left( 0,\tau  \right):\psi\, \text{is a}\ \mathcal{D}\text{-complete orbit of}\ S \right\} \\
		& =\left\{ \xi \left( \tau  \right):\xi \,\text{is a}\ \mathcal{D}\text{-complete solution of}\ S \right\},
	\end{align*}
	where ${{B}}=\left\{ {{B}}\left( \tau  \right):\tau \in \mathbb{R} \right\}$ is a ${{\mathcal{D}}}$-pullback absorbing of $S$ as given by  Lemma \ref{lem6.2}.
\end{thm}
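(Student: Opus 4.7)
The plan is to invoke Proposition \ref{P1} with $X=\mathbb{L}^2(\mathbb{R}^n)$ and $p=4$, which reduces the claim to verifying four items: (i) $\mathcal{D}$ is neighborhood-closed; (ii) the family $S=\{S(t,\tau)\}$ defined in \eqref{f2} is a continuous non-autonomous dynamical system on $\mathcal{P}_4(\mathbb{L}^2(\mathbb{R}^n))$; (iii) $S$ admits a closed $\mathcal{D}$-pullback absorbing set belonging to $\mathcal{D}$; and (iv) $S$ is $\mathcal{D}$-pullback asymptotically compact. Items (iii) and (iv) are precisely Lemma \ref{lem6.2} and Lemma \ref{Tem5.3}, so attention concentrates on (i) and (ii).

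Claim (i) is a routine verification: if $D\in\mathcal{D}$ satisfies the exponential decay \eqref{c23}, then enlarging each fiber to its $\epsilon$-neighborhood in $\mathcal{P}_4$ only increases $\|D(\tau)\|_{\mathcal{P}_4(\mathbb{L}^2)}$ by an additive constant, so the weighted limit $e^{2\eta\tau}\|D(\tau)\|^4_{\mathcal{P}_4(\mathbb{L}^2)}\to 0$ is preserved. For (ii), $S(0,\tau)=I_{\mathcal{P}_4}$ is immediate from \eqref{f1}--\eqref{f2}, and the cocycle identity \eqref{f3} follows from the uniqueness of solutions proved in Theorem \ref{lemm1}. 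The real content, and the main obstacle, is to establish the continuity of $S(t,\tau):\mathcal{P}_4\to\mathcal{P}_4$ in the metric $d_{\mathcal{P}(\mathbb{L}^2)}$, which is nontrivial precisely because $P^*_{\tau,t}$ fails to be the dual of the Markov operator $P_{\tau,t}$ in the McKean--Vlasov framework, as recorded in \eqref{a3}.

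To prove this continuity I will follow the strategy outlined in the introduction and argue on each closed ball $B_{\mathcal{P}_4(\mathbb{L}^2)}(r)$ separately. Fix $r>0$ and let $\mu_n\to\mu$ in $d_{\mathcal{P}(\mathbb{L}^2)}$ with $\mu_n,\mu\in B_{\mathcal{P}_4(\mathbb{L}^2)}(r)$. I will select representatives $\xi_{0,n},\xi_0\in L^4_{\mathcal{F}_\tau}(\Omega,\mathbb{L}^2(\mathbb{R}^n))$ with laws $\mu_n,\mu$; the uniform bound $\mathbb{E}\|\xi_{0,n}\|_{\mathbb{L}^2}^4\le r^4$ places us in the setting of Lemma \ref{lem6.1}, which yields weak convergence $\mathcal{L}_{k(t,\tau,\xi_{0,n})}\to\mathcal{L}_{k(t,\tau,\xi_0)}$, equivalently convergence in $d_{\mathcal{P}(\mathbb{L}^2)}$. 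To guarantee that the images lie in $\mathcal{P}_4$ and that $S(t,\tau)$ is continuous as a map into $\mathcal{P}_4$, I will apply Lemma \ref{lem4.6} with the bounded family $D_2=B_{\mathcal{P}_4(\mathbb{L}^2)}(r)$ to obtain a uniform fourth-moment bound on $\mathbb{E}\|k(t,\tau,\xi_{0,n})\|_{\mathbb{L}^2}^4$; Vitali's theorem then upgrades the weak convergence to the required convergence in $\mathcal{P}_4$ and confirms the limit law is in $\mathcal{P}_4$. This establishes continuity on each bounded ball, which suffices for (ii).

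With (i)--(iv) in hand, Proposition \ref{P1} delivers the unique $\mathcal{D}$-pullback measure attractor $\mathcal{A}=\{\mathcal{A}(\tau):\tau\in\mathbb{R}\}\in\mathcal{D}$ together with the three stated representations: as the $\omega$-limit set $\omega(B,\tau)$ of the absorbing set $B$ supplied by Lemma \ref{lem6.2}, as $\{\psi(0,\tau):\psi\text{ is a }\mathcal{D}\text{-complete orbit of }S\}$, and as $\{\xi(\tau):\xi\text{ is a }\mathcal{D}\text{-complete solution of }S\}$. The genuinely delicate step in the whole argument is the Vitali-based continuity verification in (ii); the remaining ingredients are direct consequences of the uniform estimates and the asymptotic compactness already derived in Section 5 and Lemma \ref{Tem5.3}.
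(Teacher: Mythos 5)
Your proposal is correct and follows essentially the same route as the paper: the paper's proof is a direct application of Proposition \ref{P1}, citing Lemma \ref{lem6.1} (weak continuity on bounded sets, obtained via Vitali's theorem), Lemma \ref{lem6.2} (closed $\mathcal{D}$-pullback absorbing set) and Lemma \ref{Tem5.3} (asymptotic compactness). Your additional remarks on neighborhood-closedness of $\mathcal{D}$, the cocycle property via uniqueness of solutions, and the fourth-moment bound just make explicit the verifications the paper leaves implicit.
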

\begin{proof}
	We can get the existence and uniqueness of the measure attractor $\mathcal{A}$ follows
from Proposition \ref{P1} based on  Lemma \ref{lem6.1}, \ref{lem6.2} and \ref{Tem5.3}.
\end{proof}


\begin{thebibliography}{10}
	\bibitem{AAB}
Abiti Adili and Wang Bixiang.
\newblock
Random attractors for stochastic FitzHugh-Nagumo systems driven by deterministic non-autonomous
forcing.
\newblock {\em Discrete and Continuous Dynamical Systems-B},
18: 643-666 (2013).		

	\bibitem{BHM} Bao Jianhai and Huang Xing.
\newblock Approximations of McKean-Vlasov SDEs with Irregular Coefficients.
Arxiv preprint arxiv: 1905.08522 (2019).


\bibitem{CWW} Chen Zhang and Wang Bixiang.   \newblock Well-posedness and large deviations of fractional McKean-Vlasov stochastic reaction-diffusion equations on
unbounded domains. Arxiv preprint arxiv: 2406.10694 (2024).

	\bibitem{EVV}
Erik, Vleck Van and Wang Bixiang.
\newblock  Attractors for lattice FitzHugh-Nagumo systems.
\newblock {\em Physica D},
212: 317-336 (2005).

	\bibitem{XFY} Fan Xiliang, Huang Xing, Suo Yongqian and  Yuan Chengui.
\newblock Distribution dependent SDEs driven by fractional Brownian
motions.
\newblock {\em Stochastic Processes and their Applications},
151: 23-67 (2022).

\bibitem{JGW} Gao Jingyue, Hong Wei and  Liu Wei. \newblock Distribution-dependent stochastic porous media equations.
Arxiv preprint arxiv: 2103.10135 (2023).



	\bibitem{AGY}
Gu Anhui, Li Yangrong and Li Jia.
\newblock Random attractors on lattice of stochastic Fitzhugh-Nagumo systems driven by $\alpha$-stable L\'{e}vy noises.
\newblock {\em International Journal of Bifurcation and Chaos},
24: 1-9 (2014).

\bibitem{AGB}
Gu Anhui and  Wang Bixiang.
\newblock Asymptotic behavior of random Fitzhugh-Nagumo systems driven
by colored noise.
\newblock {\em Discrete and Continuous Dynamical Systems-B},
23: 1689-1720 (2018).

	\bibitem{HWF} Huang Xing and Wang Fengyu. \newblock Distribution dependent SDEs with singular coefficients.
\newblock {\em  Stochastic Processes and their Applications}, 129: 4747-4770 (2019).

		\bibitem{HXR} Huang, Xing, R\"{o}ckner Michael and Wang Fengyu.
		\newblock Nonlinear Fokker-Planck equations for probability measures on path space and path-distribution dependent SDEs.
		Arxiv preprint arxiv: 1709.00556 (2020).
	
		\bibitem{WDS} Huang Xing, Ren Panpan and  Wang Fengyu.
	\newblock Distribution dependent stochastic differential equations.
	\newblock {\em Frontiers of Mathematics in China}, 16: 257-301 (2021).
	
	\bibitem{HSM}	Hong Wei, Hu Shanshan and Liu Wei.
	\newblock Mckean-Vlasov sde and spde with locally monotone coefficients.
	\newblock {\em Annals of Applied Probability}, 34: 2136-2189 (2024).
	
	
	
	\bibitem{HSL} Hu Shanshan.
	\newblock Long-time behaviour for distribution dependent SDEs with local Lipschitz coefficients. Arxiv preprint arxiv: 2103.13101 (2021).
	
		\bibitem{JNS}
	Jinichi Nagumo, Suguru Arimoto and Shuji Yoshizawa.
	\newblock An active pulse transmission line
	simulating nerve axon.
	\newblock {\em    Proceedings of the IRE},
	50: 2061-2070 (1962).
	
	\bibitem{KMP}	Kac Mark.
	\newblock Probability and Related Topics in the Physical Sciences.
	\newblock {\em Physics Today},
	1959.
	
	\bibitem{LLW}
	Li Dingshi, Wang Bixiang and Wang Xiaohu.
	\newblock Limiting behavior of non-autonomous stochastic reaction diffusion
	equations on thin domains.
	\newblock {\em Journal of Differential Equations}, 262: 1575-1602 (2017).

\bibitem{LW24}
Li Dingshi and Wang Bixiang.
\newblock Pullback measure attractors for non-autonomous stochastic
  reaction-diffusion equations on thin domains.
\newblock {\em Journal of Differential Equations}, 397: 232--261 (2024).

	
	\bibitem{LLZ}
	Li Dingshi, Li Ran and Zeng Tianhao.
	\newblock Long-time behaviour of the non-autonomous stochastic
	FitzHugh-Nagumo systems on thin domains.
	(submitted)
	
	\bibitem{YL}
	Li  Yangrong and Yin Jinyan.
	\newblock A modified proof of pullback attractors in a sobolev space
	for stochastic fitzhugh-nagumo equations.
	\newblock {\em Discrete and Continuous Dynamical Systems-B}, 21: 1203-1223 (2016).

	
		\bibitem{LSZ} Liu Wei, Song Yulin, Zhai Jianliang and Zhang Tusheng.
	\newblock Large and moderate deviation principles for
	McKean-Vlasov SDEs with jumps.
	\newblock {\em  Potential Analysis},
	59: 1141-1190 (2023).
	
	\bibitem{MHP}McKean Jr and Henry P.
	\newblock A class of Markov processes associated with nonlinear parabolic equations.
	\newblock {\em Proceedings of the National Academy of Sciences},
	56: 1907-1911 (1966).
	
	\bibitem{RZE} R\"{o}ckner Michael, Zhu Rongchan and  Zhu Xiangchan.
	\newblock Existence and uniqueness of solutions to stochastic functional
	differential equations in infinite dimensions.
	\newblock {\em Nonlinear Analysis}, 125: 358-397 (2015).	
	
	\bibitem{RZW}  R\"{o}ckner Michael and Zhang Xichen. \newblock Well-posedness of distribution dependent SDEs with singular drifts.
	\newblock {\em Bernoulli},
	27: 1131-1158 (2021).
	
	\bibitem{RFI}
	Richard FitzHugh.
	\newblock Impulses and physiological states in theoretical models of nerve membrane.
	\newblock {\em    Biophysical journal}, 1: 445-466 (1961).
	
		\bibitem{Shi}Shi Lin, Shen Jun, Lu Kening and Wang Bixiang.
	\newblock Invariant measures, periodic measures and pullback measure attractors of McKean-Vlasov stochastic reaction-diffusion equations on unbounded domains. Arxiv preprint arxiv: 2409.17548 (2024).
	
	\bibitem{BT}
	Tang Quoc Bao.
	\newblock Regularity of pullback random attractors for stochastic FitzHugh-Nagumo system on unbounded domains.
	\newblock {\em Discrete and Continuous Dynamical Systems-A},
	35: 441-466 (2015).
	
	\bibitem{WDL} Wang Fengyu.
	\newblock Distribution dependent SDEs for Landau type equations.
	\newblock {\em Stochastic Processes and their Applications},
	128: 595-621 (2019).
	
		\bibitem{BW}
	Wang Bixiang.
	\newblock Random attractors for the stochastic FitzHugh-Nagumo system on unbounded domains.
	\newblock {\em Nonlinear Analysis},
	71: 2811-2828 (2009).
	
	\bibitem{WLB}
	Wang Bixiang.
	\newblock Pullback attractors for the non-autonomous FitzHugh-Nagumo system
	on unbounded domains.
	\newblock {\em 	Nonlinear Analysis},
	70: 3799-3815 (2009).
	
	\bibitem{WBC}
	Wang Bixiang.
	\newblock Sufficient and necessary criteria for existence of pullback attractors for noncompact random dynamical systems.
	\newblock {\em Journal of Differential Equations}, 253: 1544-1583 (2012).
		
	\bibitem{WBD}Wang Bixiang.
	 \newblock Dynamics of fractional stochastic reaction-diffusion equations on unbounded domains driven by nonlinear noise.
	 \newblock {\em  Journal of Differential Equations}, 268: 1-59 (2019).	
	
	\bibitem{WDG}Wang Bixiang.
	\newblock Large deviations of fractional stochastic equations with non-Lipschitz drift and
	multiplicative noise on unbounded domains.
	\newblock {\em  Journal of Differential Equations}, 376: 1-38 (2023).
	
	\bibitem{RW}
	Wang Renhai,  Guo Boling and  Wang Bixiang. \newblock Well-posedness and dynamics of fractional
	Fitzhugh-Nagumo systems on $r^{n}$
	driven by nonlinear noise.
	\newblock {\em Science China Mathematics},
	64: 2395-2436 (2021).
	
	\bibitem{XZL} Xiong Jie and Zhai Jianliang.  \newblock Large deviations for locally monotone stochastic partial differential equations driven by L\'{e}vy noise.
		\newblock {\em  Bernoulli},
		24: 2842-2874 (2018).
	
	
\bibitem{WAG}
  Zhao Wenqiang and Gu Anhui.
  \newblock Regularity of pullback attractors and random equilibrium for non-autonomous stochastic Fitzhugh-Nagumo
system on unbounded domains.
\newblock {\em Journal of applied analysis and computation},
  7: 1285-1311 (2017).


\bibitem{WZ}
 Zhao Wenqiang.
 \newblock Continuity and random dynamics of the non-autonomous stochastic FitzHugh-Nagumo system on $\mathbb{R}^{N}$.
\newblock {\em  Computers and Mathematics with Applications},
 75: 3801-3824 (2018).

\end{thebibliography}
\end{document}